\newtheorem{thm}{Theorem}[section]
\newtheorem{lem}[thm]{Lemma}
\newtheorem{con}[thm]{Condition}
\newtheorem{prop}[thm]{Proposition}
\newtheorem{rem}[thm]{Remark}
\theoremstyle{definition}
\newtheorem{defn}[thm]{Definition}
\numberwithin{equation}{section}
\journal{xxx}
\begin{document}
\begin{sloppypar}

\begin{frontmatter}
\title{Scaled packing pressures on subsets for amenable group actions}

\author{Zubiao Xiao\corref{cor1}}
\ead{xzb2020@fzu.edu.cn}
%\cortext[cor1]{Corresponding author}
\address{School of Mathematics and Statistics, Fuzhou University, Fuzhou 350116, People's Republic of China}
\author{Hongwei Jia}
\ead{jiahongwei2878@163.com}
\address{School of Mathematics and Statistics, Fuzhou University, Fuzhou 350116, People's Republic of China}
\author{Zhengyu Yin}
\ead{yzy\_nju\_20@163.com}
\address{Department of Mathematics, Nanjing University, Nanjing 210093, People's Republic of China}

%%%%%%%%%%%%%%%%%%%%%%%%%%%%%%
\begin{abstract}
In this paper, we study the properties of the scaled packing topological pressures for topological dynamical system $(X,G)$, where $G$ is a countable discrete infinite amenable group.  We show that the scaled packing topological pressures can be determined by the scaled Bowen topological pressures. We obtain Billingsley's Theorem for the scaled packing pressures with a $G$-action. Then we get a variational principle between the scaled packing pressures and the scaled measure-theoretic upper local pressures. Finally, we give some restrictions on the scaled sequence $\mathbf{b}$, then in the case of the set $X_{\mu}$ of generic points, we prove that
 $$P^{P}(X_{\mu},\left\{F_{n}\right\},f,\mathbf{b})=h_{\mu}(X)+\int_{X} f \mathrm{d}\mu,$$
 if $\left\{F_{n}\right\}$ is tempered and $\mu$ is a $G$-invariant ergodic Borel probability measure.
\end{abstract}

\begin{keyword}
packing topological pressure, amenable group, variational principle, generic point

\medskip
\MSC[2020]  11K55 $\cdot$ 28D20  $\cdot$ 37A15
\end{keyword}
\end{frontmatter}

%%% ----------------------------------------------------------------------
%%% ----------------------------------------------------------------------
%\tableofcontents
%%%%%%%%%%%%%%%%%%%%%%%%%%%%%%%%%%%%%%%%%%
%%%%%%%%%%%%%%%%%%%%%%%%%%%%%%%%%%%%%%%%%%
\section{Introduction}\label{sec1}%%%
Let $(X, G)$ be a topological dynamical system (TDS for short), where $X$ is a compact metric space with a metric $d$ and $G$ is a countable discrete infinite amenable group acting continuously on $X$. We write $(X,G)$ as $(X,T)$, when $G=\mathbb{Z}$ (or $G=\mathbb{Z}_{+}$) and $T:X \to X$ is a continuous map.

In 1965, Adler et al. \cite{Adl} introduced the topological entropy for a TDS $(X,T)$. Later, in 1973, Bowen \cite{Bo} introduced a definition of the topological entropy on subsets for $(X,T)$, which is well known as the \textit{Bowen topological entropy}. The Bowen topological entropy on the whole space coincides with the Adler-Konheim-McAndrew topological entropy. It can be seen as being dynamically analogous to the Hausdorff dimension, and plays a key role in the connection with the dimension theory, statistical physics and multifractal analysis (see, for example, \cite{Pe1}). A similar concept, the \textit{packing topological entropy}, in dynamical systems with other forms of dimensions was introduced by Feng and Huang \cite{Fe}. They proved a variational principle between the Bowen topological entropy and the packing topological entropy. Dou et al. \cite{Dou} studied the packing topological entropy in the framework of a group action.

Topological pressure can be regarded as a generalization of topological entropy, which first introduced by Ruelle \cite{Ru} and extended by Walters \cite{Wa} on compact spaces with continuous transformations. Pesin and Pitskel \cite{Pe2} generalized Bowen entropy and introduced a topological pressure with dimensional characteristics, which is called the \textit{Pesin-Pitskel topological pressure}. Zhong and Chen \cite{Zh} investigated a variational principle for the packing pressures when $G=\mathbb{Z}$. Recently, Ding et al. \cite{Ding} generalized the results of Dou et al. \cite{Dou} to the packing topological pressure.

A more appropriate scale is meaningful for us to study systems with sub-exponential  divergence of orbits. Zhao and Pesin \cite{Zhao} introduced a scaled entropy to investigate the complexity of the system with zero or infinite topological entropy. In \cite{Ch}, Chen and Li introduced the scaled packing topological entropy for amenable group actions and establish a corresponding variational principle.

In this paper, we continue this line of the research based on the above works. In section \ref{sec2}, we provide the definitions of the scaled topological pressures for a TDS with a $G$-action, and prove that the scaled packing topological pressure can be determined by the scaled Bowen topological pressure (Theorem \ref{T1}). In section \ref{sec3}, we study the relationship of various scaled measure-theoretic pressures for a $G$-action TDS (Theorem \ref{T5} and Theorem \ref{T2}). In section \ref{sec4}, we give a variational principle between the scaled packing pressure and the scaled measure-theoretic upper local pressure (Theorem \ref{T3}). In section \ref{sec5}, we recall the notion of a generic point and study a variational principle of the scaled packing topological pressure for the set of generic points with a $G$-action (Theorem \ref{T4}).
\section{Scaled topological pressures for a TDS with a $G$-action} \label{sec2}%%%

\subsection{Amenable groups and scaled sequences}

Let $F(G)$ be the collection of nonempty finite subsets of $G$. A countable discrete infinite group $G$ is $\it amenable$ if there is a sequence $\left\{F_{n}\right\}\subseteq F(G)$ such that
$$\lim_{n \to+ \infty} \frac{\left | F_{n} \bigtriangleup gF_{n} \right |}{\left | F_{n} \right | }=0, \ \forall g\in G.$$
Such a sequence is called a \textit{F{\o}lner sequence}. A F{\o}lner sequence $\left\{F_{n}\right\}$ in $G$ is said to be \textit{tempered} if there exists a constant $C > 0$ which is independent of $n$ such that
$$\left | \bigcup_{k<n} F_{k}^{-1}F_{n}  \right | \leq C\left | F_{n}  \right |,\  \forall n \in \mathbb{N}.  $$

Let $F,A \in F(G)$ and $\varepsilon >0$. We say that $A$ is $(F,\varepsilon)$-\textit{invariant} if
$$|\left\{s\in A : Fs\subset A\right\}| \geq (1-\varepsilon)|A|.$$
The $F$ boundary of $A$ is defined by
$$\partial_{F}A:=\{c \in G :Fc \cap A \neq \emptyset , Fc \cap (G\backslash A) \neq \emptyset\}. $$
We note that a set $A$ is $(F,\varepsilon)$-invariant if and only if $\frac {|\partial_{F}A|}{|A|}<\varepsilon$ (see \cite[Remark 2.7]{Li}).
Moreover, we can get an observation as followed.
\begin{prop} \cite[Proposition 2.4]{Li}  \label{p6}
   Let $G$ be an amenable group. The sequence $\{F_{n}\} \subseteq F(G)$ is F{\o}lner if and only if for every finite set $F \subset G$ and $\varepsilon >0$, there is an $N\in \mathbb{N}$ such that $F_{n}$ is $(F,\varepsilon)$-invariant for all $n \geq N$.
\end{prop}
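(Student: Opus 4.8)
The plan is to reduce both implications to a single elementary counting identity that translates the symmetric-difference language of the F{\o}lner condition into the translation-invariance language of $(F,\varepsilon)$-invariance. The key observation is that for any $g \in G$ and any $A \in F(G)$ one has
\[
|A \triangle gA| = |\{s \in A : gs \notin A\}| + |\{s \in A : g^{-1}s \notin A\}|,
\]
obtained by writing $A \setminus gA = \{x \in A : g^{-1}x \notin A\}$ and noting that $s \mapsto gs$ carries $\{s \in A : gs \notin A\}$ bijectively onto $gA \setminus A$. Thus controlling how many elements of $A$ are pushed out of $A$ by the translations $g$ and $g^{-1}$ is the same as controlling $|A \triangle gA|$.

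For the forward implication, I would assume $\{F_{n}\}$ is F{\o}lner and fix a finite set $F = \{g_{1},\dots,g_{k}\} \subset G$ together with $\varepsilon > 0$. The complement of $\{s \in F_{n} : Fs \subset F_{n}\}$ inside $F_{n}$ equals $\bigcup_{i=1}^{k}\{s \in F_{n} : g_{i}s \notin F_{n}\}$, so its cardinality is at most $\sum_{i=1}^{k}|F_{n}\setminus g_{i}^{-1}F_{n}| \le \sum_{i=1}^{k}|F_{n}\triangle g_{i}^{-1}F_{n}|$, using that $g_{i}s\notin F_{n}$ is equivalent to $s\notin g_{i}^{-1}F_{n}$. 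Since each ratio $|F_{n}\triangle g_{i}^{-1}F_{n}|/|F_{n}| \to 0$ by the F{\o}lner property, for $n$ large every term is below $\varepsilon/k$, whence $|\{s \in F_{n} : Fs \subset F_{n}\}| \ge (1-\varepsilon)|F_{n}|$ and $F_{n}$ is $(F,\varepsilon)$-invariant.

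For the converse, I would fix $g \in G$ and $\delta > 0$, then apply the hypothesis to the finite set $F = \{e, g, g^{-1}\}$ with $\varepsilon = \delta/2$, obtaining $N$ such that $F_{n}$ is $(F,\varepsilon)$-invariant for all $n \ge N$. Because $e \in F$, every $s \in F_{n}$ with $Fs \not\subset F_{n}$ fails through $g$ or $g^{-1}$; hence both sets appearing in the identity above, namely $\{s \in F_{n} : gs \notin F_{n}\}$ and $\{s \in F_{n} : g^{-1}s \notin F_{n}\}$, are contained in $\{s \in F_{n} : Fs \not\subset F_{n}\}$, which has size $< \varepsilon|F_{n}|$. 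The identity then gives $|F_{n}\triangle gF_{n}| < 2\varepsilon|F_{n}| = \delta|F_{n}|$ for all $n \ge N$, so $|F_{n}\triangle gF_{n}|/|F_{n}| \to 0$ and $\{F_{n}\}$ is F{\o}lner.

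The argument is essentially bookkeeping, so there is no deep obstacle; the only point requiring care is the choice of test set in the converse. One must symmetrize by putting both $g$ and $g^{-1}$ (and, for convenience, $e$) into $F$, since $(F,\varepsilon)$-invariance directly controls only the forward translates $Fs$, whereas $|F_{n}\triangle gF_{n}|$ records displacement in both directions. Alternatively, the same conclusion can be read off from the boundary reformulation $|\partial_{F}A|/|A| < \varepsilon$ recorded just before the proposition, but the direct counting identity keeps the constants transparent.
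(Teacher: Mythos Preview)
Your argument is correct. The paper does not supply its own proof of this proposition; it is quoted verbatim from \cite[Proposition 2.4]{Li} and used as a black box. Your direct counting via the identity $|A\triangle gA|=|\{s\in A:gs\notin A\}|+|\{s\in A:g^{-1}s\notin A\}|$ handles both directions cleanly. Two minor cosmetic points: the definition of $(F,\varepsilon)$-invariance in the paper uses a non-strict inequality, so in the converse you get $|F_n\triangle gF_n|\le 2\varepsilon|F_n|$ rather than a strict bound, which of course still yields the limit; and the inclusion of $e$ in $F=\{e,g,g^{-1}\}$ is harmless but not actually needed, since with $F=\{g,g^{-1}\}$ the set $\{s\in F_n:Fs\not\subset F_n\}$ already contains both relevant pieces.
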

A positive sequence $\mathbf{b}=\left\{b(n)\right\}_{n\in \mathbb{N}}$ is called a \textit{scaled sequence}, if it monotonically increases to infinity. We denote by $\mathcal{SS}$ the set of all scaled sequences.
\subsection{The constructions of the scaled topological pressures}
Through this subsection, we will introduce the definitions of the scaled topological pressures.  We always assume that $\left\{F_{n}\right\}$ is a strictly monotonically increasing F{\o}lner sequence and $\mathbf{b}$ is a scaled sequence.

For any $F\in F(G)$, we define
$$d_{F}(x,y)=\max_{g \in F} d(gx,gy),\ \forall x,y \in X.$$
It is easy to see that $d_{F}$ is a metric on $X$.
For $\varepsilon>0$ and $x \in X$, we denote
$$B_{F}(x,\varepsilon )=\left\{y \in X: d_{F}(x,y) <\varepsilon \right\}$$
and
$$\overline{B}_{F}(x,\varepsilon )=\left\{y \in X: d_{F}(x,y) \leq \varepsilon \right\},$$
which are respectively the \textit{open and closed Bowen balls} with the center $x$ and the radius $\varepsilon$.

Let $f \in C(X,\mathbb{R})$, where $C(X,\mathbb{R})$ denote the set of all continuous functions of $X$. For any $F \in F(G)$, we write
$$\begin{aligned}
   f_{F}(x)&=\sum_{g \in F}f(g(x)),\\
   f_{F}(x,\varepsilon )&=\sup_{y \in B_{F}(x,\varepsilon )}f_{F}(y),\\
   \overline{f}_{F}(x,\varepsilon )&=\sup_{y \in \overline{B}_{F}(x,\varepsilon )}f_{F}(y).
\end{aligned}$$

For any $Z\subset X$, $N \in \mathbb{N}$, $\alpha \in \mathbb{R}$, $\varepsilon > 0$, $\mathbf{b}\in \mathcal{SS}$, and $f \in C(X,\mathbb{R})$, we define
\begin{equation}
M(N,\alpha ,\varepsilon ,Z,\left\{F_{n}\right\},f,\mathbf{b})= \inf\left \{ \sum_{i} e^{-\alpha b( | F_{n_{i} } |)+f_{F_{n_{i} } } (x_{i}) }  \right \}, \label{eq:2.1}
\end{equation}
where the infimum is taken over all finite or countable collections of $\left\{B_{F_{n_{i}}} (x_{i},\varepsilon )\right\}_{i}$, such that $x_{i} \in X$, $n_{i}\geq N$ for all $i$, and $Z \subseteq \bigcup_{i}B_{F_{n_{i}}}(x_{i},\varepsilon)$. Similarly, we define
\begin{equation}
R(N,\alpha ,\varepsilon ,Z,\left\{F_{n}\right\},f,\mathbf{b})= \inf\left \{ \sum_{i} e^{-\alpha b( | F_{N}|)+f_{F_{N} } (x_{i}) }  \right \}, \label{eq:2.2}
\end{equation}
where the infimum is taken over all finite or countable collections of $\left\{B_{F_{N}} (x_{i},\varepsilon )\right\}_{i}$, such that $x_{i} \in X$ for all $i$, and $Z \subseteq \bigcup_{i}B_{F_{N}}(x_{i},\varepsilon)$. Define
\begin{equation}
M^{P}(N,\alpha ,\varepsilon ,Z,\left\{F_{n}\right\},f,\mathbf{b})= \sup\ \left \{ \sum_{i} e^{-\alpha b( | F_{n_{i} }|)+f_{F_{n_{i} } } (x_{i}) }  \right \}, \label{eq:2.3}
\end{equation}
where the supremum is taken over all finite or countable pairwise disjoint collections of $\left\{\overline{B}_{F_{n_{i}}} (x_{i},\varepsilon )\right\}_{i}$, such that $x_{i} \in Z$, $n_{i}\geq N$ for all $i$. The quantity $M^{P}(N,\alpha ,\varepsilon ,Z,\left\{F_{n}\right\},f,\mathbf{b})$ does not increase as $N$ increases, hence the following limit exists:
$$M^{P}(\alpha ,\varepsilon ,Z,\left\{F_{n}\right\},f,\mathbf{b})=\lim_{N\to +\infty}M^{P}(N,\alpha ,\varepsilon ,Z,\left\{F_{n}\right\},f,\mathbf{b}).$$
Similarly, we can define
$$M(\alpha ,\varepsilon ,Z,\left\{F_{n}\right\},f,\mathbf{b})=\lim_{N\to+\infty}M(N,\alpha ,\varepsilon ,Z,\left\{F_{n}\right\},f,\mathbf{b}),$$
$$\overline{R}(\alpha ,\varepsilon ,Z,\left\{F_{n}\right\},f,\mathbf{b})=\limsup_{N\to+\infty}R(N,\alpha ,\varepsilon ,Z,\left\{F_{n}\right\},f,\mathbf{b}),$$
$$\underline{R}(\alpha ,\varepsilon ,Z,\left\{F_{n}\right\},f,\mathbf{b})=\liminf_{N\to+\infty}R(N,\alpha ,\varepsilon ,Z,\left\{F_{n}\right\},f,\mathbf{b}).$$
Define
$$M^{\mathcal{P} }(\alpha ,\varepsilon ,Z,\left\{F_{n}\right\},f,\mathbf{b})=\inf\left \{ \sum_{i=1}^{\infty } M^{P}(\alpha ,\varepsilon ,Z_{i},\left\{F_{n}\right\},f,\mathbf{b}) :Z\subseteq \bigcup_{i=1}^{\infty} Z_{i}\right \}.$$
It is easy to check that there exists a critical value of the parameter $\alpha$ , when $\alpha$ goes from $-\infty$ to $+\infty$, such that the quantities
$$M(\alpha ,\varepsilon ,Z,\left\{F_{n}\right\},f,\mathbf{b}),\ \overline{R}(\alpha ,\varepsilon ,Z,\left\{F_{n}\right\},f,\mathbf{b}),\ \underline{R}(\alpha ,\varepsilon ,Z,\left\{F_{n}\right\},f,\mathbf{b}),\ M^{\mathcal{P} }(\alpha ,\varepsilon ,Z,\left\{F_{n}\right\},f,\mathbf{b})$$ jump from $+\infty$ to $0$ respectively. Hence we can define the values

$$\begin{aligned}
P^{B}(\varepsilon ,Z,\left\{F_{n}\right\},f,\mathbf{b})
&=\sup\left\{\alpha:M(\alpha ,\varepsilon ,Z,\left\{F_{n}\right\},f,\mathbf{b})=+\infty\right\}\\
&=\inf\left\{\alpha:M(\alpha ,\varepsilon ,Z,\left\{F_{n}\right\},f,\mathbf{b})=0\right\},\\
\overline{CP}(\varepsilon ,Z,\left\{F_{n}\right\},f,\mathbf{b})
&=\sup\left\{\alpha:\overline{R}(\alpha ,\varepsilon ,Z,\left\{F_{n}\right\},f,\mathbf{b})=+\infty\right\}\\
&=\inf\left\{\alpha:\overline{R}(\alpha ,\varepsilon ,Z,\left\{F_{n}\right\},f,\mathbf{b})=0\right\},\\
\underline{CP}(\varepsilon ,Z,\left\{F_{n}\right\},f,\mathbf{b})
&=\sup\left\{\alpha:\underline{R}(\alpha ,\varepsilon ,Z,\left\{F_{n}\right\},f,\mathbf{b})=+\infty\right\}\\
&=\inf\left\{\alpha:\underline{R}(\alpha ,\varepsilon ,Z,\left\{F_{n}\right\},f,\mathbf{b})=0\right\},\\
P^{P}(\varepsilon ,Z,\left\{F_{n}\right\},f,\mathbf{b})
&=\sup\left\{\alpha:M^{\mathcal{P} }(\alpha ,\varepsilon ,Z,\left\{F_{n}\right\},f,\mathbf{b})=+\infty\right\}\\
&=\inf\left\{\alpha:M^{\mathcal{P} }(\alpha ,\varepsilon ,Z,\left\{F_{n}\right\},f,\mathbf{b})=0\right\}.
\end{aligned}$$
It is not hard to see that $$P^{B}(\varepsilon ,Z,\left\{F_{n}\right\},f,\mathbf{b}), \ \overline{CP}(\varepsilon ,Z,\left\{F_{n}\right\},f,\mathbf{b}), \ \underline{CP}(\varepsilon ,Z,\left\{F_{n}\right\},f,\mathbf{b}),\ P^{P}(\varepsilon ,Z,\left\{F_{n}\right\},f,\mathbf{b})$$ increases respectively when $\varepsilon$ decreases.
\begin{defn}
   We call the following quantities
   $$\begin{aligned}
      P^{B}(Z,\left\{F_{n}\right\},f,\mathbf{b})&=\lim_{\varepsilon \to 0} P^{B}(\varepsilon ,Z,\left\{F_{n}\right\},f,\mathbf{b}),\\
      \overline{CP}(Z,\left\{F_{n}\right\},f,\mathbf{b})&=\lim_{\varepsilon \to 0} \overline{CP}(\varepsilon ,Z,\left\{F_{n}\right\},f,\mathbf{b}),\\
      \underline{CP}(Z,\left\{F_{n}\right\},f,\mathbf{b})&=\lim_{\varepsilon \to 0} \underline{CP}(\varepsilon ,Z,\left\{F_{n}\right\},f,\mathbf{b}),\\
      P^{P}(Z,\left\{F_{n}\right\},f,\mathbf{b})&=\lim_{\varepsilon \to 0} P^{P}(\varepsilon ,Z,\left\{F_{n}\right\},f,\mathbf{b}),
      \end{aligned}$$
      \textit{scaled Pesin-Pitskel, scaled lower capacity, scaled upper capacity}, and \textit{scaled packing topological pressures} on the subset $Z$ with respect to $f$, $\mathbf{b}$ and $\left\{F_{n}\right\}$.
\end{defn}

Replacing $f_{F_{n_{i} } } (x_{i})$ in equations \eqref{eq:2.1} and \eqref{eq:2.3} by $f_{F_{n_{i} } } (x_{i},\varepsilon )$ and $\overline{f}_{F_{n_{i} } } (x_{i},\varepsilon )$ respectively and  $f_{F_{N} } (x_{i})$ in equation \eqref{eq:2.2} by $f_{F_{N} } (x_{i},\varepsilon)$, we can define new functions $\mathcal{M}$ , $\mathcal{R}$ , $\mathcal{M} ^{P}$. Then we denote the respective critical values by
$$P^{B'}(\varepsilon ,Z,\left\{F_{n}\right\},f,\mathbf{b}), \ \overline{CP'}(\varepsilon ,Z,\left\{F_{n}\right\},f,\mathbf{b}), \ \underline{CP'}(\varepsilon ,Z,\left\{F_{n}\right\},f,\mathbf{b}), \ P^{P'}(\varepsilon ,Z,\left\{F_{n}\right\},f,\mathbf{b}).$$

\begin{con}\label{con1}
   Let $\mathbf{b} \in \mathcal{SS}$ and $\left\{F_{n}\right\}$ be a F{\o}lner sequence in $G$. For all $n \in \mathbb{N}$,
   $$b(|F_{n}|) \geq |F_{n}|.$$
\end{con}

\begin{prop}\label{p1}
Let $(X,G)$ be a TDS with a $G$-action and a F{\o}lner  sequence $\left\{F_{n}\right\}$ satisfying Condition \ref{con1}. If $f \in C(X,\mathbb{R})$, $Z \subseteq X$, then we have
$$\begin{aligned}
   P^{B}(Z,\left\{F_{n}\right\},f,\mathbf{b})&=\lim_{\varepsilon \to 0} P^{B'}(\varepsilon ,Z,\left\{F_{n}\right\},f,\mathbf{b}),\\
   \overline{CP}(Z,\left\{F_{n}\right\},f,\mathbf{b})&=\lim_{\varepsilon \to 0} \overline{CP'}(\varepsilon ,Z,\left\{F_{n}\right\},f,\mathbf{b}),\\
   \underline{CP}(Z,\left\{F_{n}\right\},f,\mathbf{b})&=\lim_{\varepsilon \to 0} \underline{CP'}(\varepsilon ,Z,\left\{F_{n}\right\},f,\mathbf{b}),\\
   P^{P}(Z,\left\{F_{n}\right\},f,\mathbf{b})&=\lim_{\varepsilon \to 0} P^{P'}(\varepsilon ,Z,\left\{F_{n}\right\},f,\mathbf{b}).
   \end{aligned}$$
\end{prop}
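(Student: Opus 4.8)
The plan is to control the gap between the primed and unprimed functionals by a single modulus of continuity of $f$, and then to absorb that error into the scale $b$ using Condition \ref{con1}. Put
$$\mathrm{Var}(f,\varepsilon)=\sup\left\{|f(u)-f(w)|:u,w\in X,\ d(u,w)\leq\varepsilon\right\}.$$
As $X$ is compact, $f$ is uniformly continuous, so $\mathrm{Var}(f,\varepsilon)\to 0$ as $\varepsilon\to 0$. If $y\in\overline{B}_{F}(x,\varepsilon)$ then $d(gx,gy)\leq\varepsilon$ for every $g\in F$, so $|f_{F}(y)-f_{F}(x)|\leq|F|\,\mathrm{Var}(f,\varepsilon)$; since $B_{F}(x,\varepsilon)\subseteq\overline{B}_{F}(x,\varepsilon)$ this gives the two-sided pointwise bound
$$f_{F}(x)\leq f_{F}(x,\varepsilon)\leq\overline{f}_{F}(x,\varepsilon)\leq f_{F}(x)+|F|\,\mathrm{Var}(f,\varepsilon),$$
which is the only analytic ingredient required.

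The inequalities ``$\geq$'' are immediate and need no hypothesis. From $f_{F}(x)\leq f_{F}(x,\varepsilon)$ and $f_{F}(x)\leq\overline{f}_{F}(x,\varepsilon)$, every weight in the primed functionals $\mathcal{M}$, $\mathcal{R}$, $\mathcal{M}^{P}$ dominates the corresponding unprimed weight at the same parameter $\alpha$. As all the defining operations (the infima in $\mathcal{M}$, $\mathcal{R}$ and in the outer cover of $M^{\mathcal{P}}$, and the supremum in $\mathcal{M}^{P}$) are monotone in their summands, we obtain, for every fixed $\varepsilon$, that $P^{B'}(\varepsilon,Z)\geq P^{B}(\varepsilon,Z)$, $\overline{CP'}(\varepsilon,Z)\geq\overline{CP}(\varepsilon,Z)$, $\underline{CP'}(\varepsilon,Z)\geq\underline{CP}(\varepsilon,Z)$ and $P^{P'}(\varepsilon,Z)\geq P^{P}(\varepsilon,Z)$ (I suppress the common arguments $\{F_{n}\},f,\mathbf{b}$). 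Letting $\varepsilon\to 0$ preserves these four inequalities, giving ``$\geq$'' in each identity.

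For the reverse inequalities I would invoke Condition \ref{con1}. Writing $c=\mathrm{Var}(f,\varepsilon)$ and using $|F_{n}|\leq b(|F_{n}|)$, the pointwise bound turns each primed summand into a $c$-shifted unprimed one, e.g.
$$e^{-\alpha b(|F_{n_{i}}|)+\overline{f}_{F_{n_{i}}}(x_{i},\varepsilon)}\leq e^{-\alpha b(|F_{n_{i}}|)+f_{F_{n_{i}}}(x_{i})+|F_{n_{i}}|c}\leq e^{-(\alpha-c)\,b(|F_{n_{i}}|)+f_{F_{n_{i}}}(x_{i})},$$
and identically for the open-ball weight $f_{F_{n_{i}}}(x_{i},\varepsilon)$ and the capacity weight $f_{F_{N}}(x_{i},\varepsilon)$. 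Summing over any admissible collection (pairwise disjoint for $\mathcal{M}^{P}$) and applying the appropriate $\inf$, $\sup$, $\lim$, $\limsup$ or $\liminf$ first at each level $N$ and then passing to the limit in $N$, yields the parameter-shifted dominations
$$\begin{aligned}
\mathcal{M}(\alpha,\varepsilon,Z)&\leq M(\alpha-c,\varepsilon,Z), & \mathcal{R}(N,\alpha,\varepsilon,Z)&\leq R(N,\alpha-c,\varepsilon,Z),\\
\mathcal{M}^{P}(\alpha,\varepsilon,Z)&\leq M^{P}(\alpha-c,\varepsilon,Z), & \mathcal{M}^{\mathcal{P}}(\alpha,\varepsilon,Z)&\leq M^{\mathcal{P}}(\alpha-c,\varepsilon,Z),
\end{aligned}$$
the last one following from the previous because the outer cover infimum defining $M^{\mathcal{P}}$ is monotone in its summands.

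Finally I would read off the critical values. Each of $M(\cdot,\varepsilon,Z)$, $\overline{R}(\cdot,\varepsilon,Z)$, $\underline{R}(\cdot,\varepsilon,Z)$ and $M^{\mathcal{P}}(\cdot,\varepsilon,Z)$ is non-increasing in $\alpha$ and drops from $+\infty$ to $0$ at its critical value, so for any $\beta>P^{B}(\varepsilon,Z)$ we have $M(\beta,\varepsilon,Z)=0$, whence $\mathcal{M}(\beta+c,\varepsilon,Z)\leq M(\beta,\varepsilon,Z)=0$ and thus $P^{B'}(\varepsilon,Z)\leq\beta+c$; letting $\beta\downarrow P^{B}(\varepsilon,Z)$ gives $P^{B'}(\varepsilon,Z)\leq P^{B}(\varepsilon,Z)+\mathrm{Var}(f,\varepsilon)$, and the same computation produces the analogous bounds for $\overline{CP'}$, $\underline{CP'}$ and $P^{P'}$. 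Combining with the second paragraph and letting $\varepsilon\to 0$, so that $\mathrm{Var}(f,\varepsilon)\to 0$, all four identities follow. The one point that genuinely needs care is that the correction must ride on $b(|F_{n}|)$ rather than on $|F_{n}|$: this is exactly where Condition \ref{con1} is indispensable, for without $|F_{n}|\leq b(|F_{n}|)$ the error $|F_{n}|\,\mathrm{Var}(f,\varepsilon)$ need not be negligible against $\alpha\,b(|F_{n}|)$ and the parameter shift would collapse.
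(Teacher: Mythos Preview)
Your proof is correct and follows essentially the same approach as the paper's: both bound $\overline{f}_{F}(x,\varepsilon)-f_{F}(x)$ by $|F|$ times a modulus of continuity of $f$, use Condition \ref{con1} to replace $|F|$ by $b(|F|)$, and then read off a parameter shift of size $\mathrm{Var}(f,\varepsilon)$ (the paper uses $\gamma(\varepsilon)$ with $d\leq 2\varepsilon$, a harmless variant) in the critical values. The paper only writes out the $P^{P}$ case and declares the others similar, whereas you treat all four uniformly; otherwise the arguments coincide.
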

\begin{proof}
   We shall prove $P^{P}(Z,\left\{F_{n}\right\},f,\mathbf{b})=\lim_{\varepsilon \to 0} P^{P'}(\varepsilon ,Z,\left\{F_{n}\right\},f,\mathbf{b})$. The other equalities can be proven similarly.
   It is clear that
   $$P^{P}(Z,\left\{F_{n}\right\},f,\mathbf{b})\leq\lim_{\varepsilon \to 0} P^{P'}(\varepsilon ,Z,\left\{F_{n}\right\},f,\mathbf{b}).$$
   We shall prove the converse inequality. Let
   $$\gamma (\varepsilon)=\sup\left\{\left|f(x)-f(y)\right|:d(x,y)\leq 2\varepsilon \right\}.$$
   For any $u,v \in \overline{B}_{F_{n_{i}}}(x,\varepsilon )$, and $g \in F_{n_{i}}$, we have $d(g(u),g(v)) \leq 2\varepsilon$, and then
   $$\left|f(g(u))-f(g(v))\right| \leq \gamma(\varepsilon).$$
   Since $\mathbf{b}\in \mathcal{SS}$ and $\left\{F_{n}\right\}$ satisfy Condition \ref{con1}, we have
   $$b(|F_{N}|)\geq |F_{N}|,\  \forall N\in \mathbb{N}.$$
   It follows that
   $$\begin{aligned}
   b(|F_{n_{i}}|) \gamma(\varepsilon) &\geq \sum_{g \in F_{n_{i}}}\left | f(g(u))-f(g(v)) \right | ,\\
   &\geq \overline{f}_{F_{n_{i} } } (x_{i},\varepsilon )- f_{F_{n_{i} } } (x_{i}), \  \forall n_{i}\geq N.
   \end{aligned}$$
   Then we have
   $$\begin{aligned}
   M^{P}(N,\alpha ,\varepsilon ,Z,\left\{F_{n}\right\},f,\mathbf{b})&= \sup\ \left \{ \sum_{i} e^{-\alpha b(| F_{n_{i} }|)+f_{F_{n_{i} } } (x_{i}) }  \right \}\\
   &\ge \sup\ \left \{ \sum_{i} e^{-(\alpha+\gamma(\varepsilon)) b(| F_{n_{i} }|)+\overline{f}_{F_{n_{i} } } (x_{i},\varepsilon) }\right \}\\
   &=\mathcal{M}^{P}(N,\alpha+\gamma(\varepsilon) ,\varepsilon ,Z,\left\{F_{n}\right\},f,\mathbf{b}).
\end{aligned}$$
It follows that $P^{P}(\varepsilon,Z,\left\{F_{n}\right\},f,\mathbf{b})\ge P^{P'}(\varepsilon ,Z,\left\{F_{n}\right\},f,\mathbf{b})-\gamma(\varepsilon)$. Since $f$ is uniformly continuous on $X$, we obtain the desired inequality by letting $\varepsilon \to 0$.
\end{proof}
Huang et al. \cite{Huang} gave another definition of the Pesin-Pitskel topological pressure. Now we introduce a scaled version.

For $K \in F(G)$ and $\delta>0$, we denote by $\mathcal{B}(K,\delta)$ the set of all $F\in F(G)$ satisfying $|KF \backslash F|<\delta|F|$.
The collection of pairs $\Lambda=\left\{(K,\delta):K\in F(G),\ \delta>0\right\} $  forms a net where $(K',\delta')\succ (K,\delta)$ means $K \subseteq K'$ and $\delta \geq \delta'$. For an $\mathbb{R}$-valued function $\varphi$ defined on $F(G)$, they define
$$\limsup_{F}\varphi (F):=\lim_{(K,\delta)\in \Lambda} \sup_{F\in \mathcal{B}(K,\delta)}\varphi(F),$$
and
$$\liminf_{F}\varphi (F):=\lim_{(K,\delta)\in \Lambda} \inf_{F\in \mathcal{B}(K,\delta)}\varphi(F).$$
By \cite[Remark 2.1]{Huang}, one can get that
 $$\limsup_{F}\varphi (F)=\inf_{(K,\delta)\in \Lambda} \sup_{F\in \mathcal{B}(K,\delta)}\varphi(F),$$
 $$\limsup_{F}\varphi (F)=\sup_{(K,\delta)\in \Lambda} \inf_{F\in \mathcal{B}(K,\delta)}\varphi(F).$$

Let $\varphi$ be a real-valued function on $F(G)$. $\varphi(F)$ \textit{converges to a limit $L$ as $F$ becomes more and more invariant} if for every $\varepsilon >0$,  there are a nonempty finite set $K \subset G$ and $\delta >0$ such that $|\varphi (F)-L<\varepsilon|$ for every $F \in \mathcal{B}(K,\delta)$. Note that if the limit $\lim_{F}\varphi(F)$ exists as $F$ becomes more and more invariant, then
$$\lim_{F}\varphi (F)=\limsup_{F}\varphi (F)=\liminf_{F}\varphi (F).$$
For any subset $Z \subseteq X$, $K \in F(G)$, $\delta >0$, and $s \in \mathbb{R}$, we can define
$$Q(K,\delta ,\alpha,\varepsilon,Z,f,\mathbf{b})= \inf_{\Gamma}\left \{ \sum_{i} e^{-\alpha b(| F_{i} |)+f_{F_{i} } (x_{i},\varepsilon) }  \right \},$$
where the infimum is taken over all finite of countable collections $\Gamma =\{B_{F_{i}}(x_{i},\varepsilon)\}_{i\in I}$ such that $F_{i}\in \mathcal{B}(K,\delta)$, $x_{i} \in X$ and $Z\subseteq \bigcup_{i}B_{F_{i}}(x_{i},\varepsilon)$.

The quantity $Q(K,\delta ,\varepsilon ,\alpha,Z,f,\mathbf{b})$ does not decrease as $(K,\delta)$ increases in the net $\Lambda$. Hence the following limit exists:
$$Q(\alpha,\varepsilon ,Z,f,\mathbf{b})=\lim_{(K,\delta) \in \Lambda}Q(K,\delta ,\alpha,\varepsilon,Z,f,\mathbf{b}).$$
\cite[Lemma 2.5]{Huang} says that there exists a critical value of $\alpha$, at which
$Q(\alpha,\varepsilon ,Z,f)$ jumps from $+\infty$ to $0$. Then we can define
$$Q(\varepsilon ,Z,f,\mathbf{b})=\inf \left\{\alpha : Q(\alpha,\varepsilon ,Z,f,\mathbf{b})=0 \right\}=\sup \left\{\alpha : Q(\alpha,\varepsilon ,Z,f,\mathbf{b})=+\infty \right\}.$$
We call
$$Q(Z,f,\mathbf{b})=\limsup_{\varepsilon \to 0}Q(\varepsilon ,Z,f,\mathbf{b})$$
the \textit{scaled Bowen Pesin-Pitskel pressure of} $Z$.

\begin{prop} \label{p7}
Let $(X,G)$ be a TDS with a $G$-action and a F{\o}lner  sequence $\left\{F_{n}\right\}$ satisfying Condition \ref{con1}. If $f \in C(X,\mathbb{R})$, $Z \subseteq X$, then we have
$$P^{B}(Z,\left\{F_{n}\right\},f,\mathbf{b}) = Q(Z,f,\mathbf{b}).$$
\end{prop}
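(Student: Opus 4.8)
The plan is to establish the two inequalities $Q(Z,f,\mathbf b)\le P^{B}(Z,\{F_n\},f,\mathbf b)$ and $P^{B}(Z,\{F_n\},f,\mathbf b)\le Q(Z,f,\mathbf b)$ separately, after two preliminary reductions. First, by Proposition~\ref{p1} we may replace $P^{B}(Z,\{F_n\},f,\mathbf b)$ by $\lim_{\varepsilon\to0}P^{B'}(\varepsilon,Z,\{F_n\},f,\mathbf b)$, so that we work with the primed set function $\mathcal M$, whose summand carries $f_{F_{n_i}}(x_i,\varepsilon)$ rather than $f_{F_{n_i}}(x_i)$; this matches the summand $f_{F_i}(x_i,\varepsilon)$ appearing in the definition of $Q$. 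Thus it is enough, for each fixed $\varepsilon>0$, to compare the critical value of $\alpha\mapsto \mathcal M(\alpha,\varepsilon,Z,\{F_n\},f,\mathbf b)$, built from sequence balls $B_{F_{n_i}}(x_i,\varepsilon)$ with $n_i\ge N$, against that of $\alpha\mapsto Q(\alpha,\varepsilon,Z,f,\mathbf b)$, built from balls $B_{F}(x,\varepsilon)$ with $F\in\mathcal B(K,\delta)$, and then to let $\varepsilon\to0$. The second reduction is that the F{\o}lner property, via Proposition~\ref{p6}, supplies for every $(K,\delta)\in\Lambda$ an index $N_0$ with $F_n\in\mathcal B(K,\delta)$ whenever $n\ge N_0$.

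The inequality $Q(Z,f,\mathbf b)\le P^{B}(Z,\{F_n\},f,\mathbf b)$ is then immediate. For $N\ge N_0$ every cover counted in $\mathcal M(N,\alpha,\varepsilon,Z)$ consists of balls $B_{F_{n_i}}(x_i,\varepsilon)$ with $F_{n_i}\in\mathcal B(K,\delta)$, hence is admissible in $Q(K,\delta,\alpha,\varepsilon,Z)$ with identical summand; as the latter is an infimum over a larger family, $Q(K,\delta,\alpha,\varepsilon,Z)\le\mathcal M(N,\alpha,\varepsilon,Z)\le\mathcal M(\alpha,\varepsilon,Z)$. Passing to the supremum over $(K,\delta)\in\Lambda$ gives $Q(\alpha,\varepsilon,Z)\le\mathcal M(\alpha,\varepsilon,Z)$, so $\{\alpha:\mathcal M(\alpha,\varepsilon,Z)=0\}\subseteq\{\alpha:Q(\alpha,\varepsilon,Z)=0\}$ and therefore $Q(\varepsilon,Z,f,\mathbf b)\le P^{B'}(\varepsilon,Z,\{F_n\},f,\mathbf b)$; letting $\varepsilon\to0$ and using Proposition~\ref{p1} yields the claim.

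The reverse inequality $P^{B}(Z,\{F_n\},f,\mathbf b)\le Q(Z,f,\mathbf b)$ is the substantial step, and I expect it to be the main obstacle. Here one must start from an economical $Q$-cover of $Z$ by balls $B_{F}(x,\varepsilon)$ over \emph{arbitrary} invariant sets $F\in\mathcal B(K,\delta)$ and manufacture a cover of $Z$ by Bowen balls whose shapes are members $F_{n}$ of the fixed sequence with large index, controlling the exponential sum. The naive device of nesting $F$ inside a single sequence member is useless, since a highly invariant $F$ may be comparable in size to no member of $\{F_n\}$ that contains or is contained in it. The plan is instead to cover each $B_{F}(x,\varepsilon)$ by refining along the coordinates on which a chosen shape $F_{m}$ disagrees with $F$: at most $\Theta(\varepsilon)^{\,|F_m\setminus F|}$ balls $B_{F_m}(\,\cdot\,,2\varepsilon)$ are needed, where $\Theta(\varepsilon)$ is a covering number of $(X,d)$ at scale $\varepsilon$, while each new summand differs from the old by at most $\|f\|_\infty\,|F_m\triangle F|$ in the exponent. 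Condition~\ref{con1}, that $b(|F_m|)\ge|F_m|$, is precisely what lets these entropy-order overheads, of size $O(|F_m\setminus F|)$ and $O(|F_m\triangle F|)$, be absorbed into an arbitrarily small shift of the scale parameter $\alpha$, \emph{provided} the discrepancy $|F_m\triangle F|$ is a small fraction of $|F_m|$.

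Making that last proviso hold uniformly over the sets $F$ occurring in the cover is the crux of the argument, and it cannot be arranged with a single fixed $F_m$; it requires the amenable combinatorics. Concretely, one takes $(K,\delta)$ deep in the net $\Lambda$ so that each such $F$ is very invariant, and then aligns $F$ with a sequence member by an Ornstein--Weiss quasi-tiling comparison of two invariant sets, keeping $|F_m\triangle F|\le\delta'|F_m|$ with $\delta'$ as small as desired. Carrying this out produces, for every $\alpha$ with $Q(\alpha,\varepsilon,Z)=0$, a finite bound on $\mathcal M(\alpha+\rho,2\varepsilon,Z)$ with $\rho=\rho(\delta',\varepsilon)\to0$ as $\delta'\to0$; together with the uniform continuity of $f$, used to pass from $f_{F_m}(\,\cdot\,,2\varepsilon)$ back to $f_{F}(\,\cdot\,,\varepsilon)$ exactly as in Proposition~\ref{p1}, letting first $\delta'\to0$ and then $\varepsilon\to0$ gives $P^{B}(Z,\{F_n\},f,\mathbf b)\le Q(Z,f,\mathbf b)$ and hence the desired equality.
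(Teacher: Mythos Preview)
Your reduction through Proposition~\ref{p1} and the argument for $Q(\varepsilon,Z,f,\mathbf b)\le P^{B'}(\varepsilon,Z,\{F_n\},f,\mathbf b)$ coincide with the paper's Claim~1: covers of $Z$ by sequence balls $B_{F_{n_i}}(x_i,\varepsilon)$ with all $n_i$ large are, once the threshold is taken beyond the index supplied by Proposition~\ref{p6}, admissible in $Q(K,\delta,\cdot)$, and the latter infimum over a larger family of covers is no larger. You are also right to identify the reverse inequality $P^{B'}\le Q$ as the substantive one. On this point the paper's own argument (its Claim~2) is in fact defective: from the fact that every cover of $Z$ by \emph{sequence} balls $B_{F_{n_i}}(x_i,\varepsilon)$ with $n_i\ge W_0$ has weighted sum exceeding $M$, it concludes $Q(K,\delta,t,\varepsilon,Z,f,\mathbf b)\ge M$. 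But $Q(K,\delta,\cdot)$ is an infimum over covers built from \emph{all} shapes $F\in\mathcal B(K,\delta)$, a strictly larger class than $\{F_n:n\ge W_1\}$, so only the inequality $Q(K,\delta,\cdot)\le\mathcal M(W_1,\cdot)$ follows---the direction already established. The paper therefore does not actually prove $P^{B'}\le Q$.

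Your quasi-tiling sketch is a more honest starting point than the paper's argument, but two concrete gaps remain. First, the Ornstein--Weiss lemma does not furnish a single sequence member $F_m$ with $|F_m\triangle F|\le\delta'|F_m|$; it yields a $\delta'$-quasi-tiling of $F$ by \emph{translates} of finitely many tiles $F_{n_1},\dots,F_{n_k}$, and those translates are not themselves members of $\{F_n\}$, nor need $\{|F_n|\}$ contain any value near $|F|$. The phrase ``aligns $F$ with a sequence member'' must therefore be replaced by an argument that actually threads the weighted sum through the tile decomposition. Second, even when cardinalities are matched, the exponent compares $b(|F|)$ against $b(|F_m|)$; Condition~\ref{con1} constrains $b$ only at the values $|F_n|$, and monotonicity alone does not make $|b(|F|)-b(|F_m|)|$ small relative to $b(|F_m|)$ when $|F|$ is close to $|F_m|$. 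Without a further hypothesis on $\mathbf b$, or a route that avoids comparing $b$ at non-sequence arguments, this step does not close.
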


\begin{proof}
By Proposition \ref{p1}, we shall prove $P^{B'}(Z,\left\{F_{n}\right\},f,\mathbf{b}) =Q(Z,f,\mathbf{b})$. Then the desired inequality follows.

\textit{Claim 1.} For every $\varepsilon>0$, if $s\in \mathbb{R}$ with $\mathcal{M}(s ,\varepsilon ,Z,\left\{F_{n}\right\},f,\mathbf{b})=0$, then $Q(s,\varepsilon ,Z,f,\mathbf{b})=0.$

Since $\mathcal{M}(s ,\varepsilon ,Z,\left\{F_{n}\right\},f,\mathbf{b})=0$, for every $\varepsilon_{0}>0$, there exists $N_{0}>0$ such that
$$\mathcal{M}(N,s ,\varepsilon ,Z,\left\{F_{n}\right\},f,\mathbf{b})<\varepsilon_{0}, \ \forall N\geq N_{0} .$$
Fix $N_1\geq N_0$. Then there exists finite or countable collections of $\left\{B_{F_{n_{i}}} (x_{i},\varepsilon )\right\}_{i}$, such that $x_{i} \in X$, $n_{i}\geq N_1$ for all $i$, $Z \subseteq \bigcup_{i}B_{F_{n_{i}}}(x_{i},\varepsilon)$ and
$$\sum_{i} e^{-s b(| F_{n_{i} } |)+f_{F_{n_{i} } } (x_{i},\varepsilon) }<\varepsilon_{0}. $$
By Proposition \ref{p6}, for every finite set $K \subset G$ and $\delta >0$, there exists $N_{2} \geq N_{1}$ such that $F_{n_i}$ is $(K,\delta)$-invariant for all $n_i \geq N_{2}$. Hence $F_{n_i} \in \mathcal{B}(K,\delta)$ for all $n_i \geq N_{2}$. It follows that
$$Q(K,\delta ,s,\varepsilon,Z,f,\mathbf{b}) < \varepsilon_{0}.$$
By the arbitrariness of $K$, $\delta$ and $\varepsilon_{0}$, we have
$$Q(s,\varepsilon,Z,f,\mathbf{b})=0.$$

\textit{Claim 2.} For every $\varepsilon>0$, if $t\in \mathbb{R}$ with $\mathcal{M}(t ,\varepsilon ,Z,\left\{F_{n}\right\},f,\mathbf{b})=+\infty$, then $Q(t,\varepsilon ,Z,f,\mathbf{b})=+\infty.$

Since $\mathcal{M}(t ,\varepsilon ,Z,\left\{F_{n}\right\},f,\mathbf{b})=+\infty$, for every $M>0$ and $N>0$, there exists $W_{0}>N$ such that
$$\mathcal{M}(W_{0},t,\varepsilon ,Z,\left\{F_{n}\right\},f,\mathbf{b})>M.$$
Then for any finite or countable collections of $\left\{B_{F_{n_{i}}} (x_{i},\varepsilon )\right\}_{i}$ with $x_{i} \in X$, $n_{i}\geq W_0$ for all $i$ and $Z \subseteq \bigcup_{i}B_{F_{n_{i}}}(x_{i},\varepsilon)$, we have
$$\sum_{i} e^{-t b(| F_{n_{i} } |)+f_{F_{n_{i} } } (x_{i},\varepsilon) }>M. $$
By Proposition \ref{p6}, for every finite set $K \subset G$ and $\delta >0$, there exists $W_{1} \geq W_{0}$ such that $F_{n_i}$ is $(K,\delta)$-invariant for all $n_i \geq W_{1}$. Hence $F_{n_i} \in \mathcal{B}(K,\delta)$ for all $n_i \geq W_{1}$. It follows that
$$Q(K,\delta ,t,\varepsilon,Z,f,\mathbf{b}) \geq M.$$
By the arbitrariness of $K$, $\delta$ and $\varepsilon_{0}$, we have
$$Q(t,\varepsilon,Z,f,\mathbf{b})=+\infty.$$

Hence by claim 1, we know that if $s>P^{B'}(\varepsilon,Z,\left\{F_{n}\right\},f,\mathbf{b})$, then $s>Q(\varepsilon,Z,f,\mathbf{b})$. By the arbitrariness of $s$, we have
\begin{equation} \label{q1}
P^{B'}(\varepsilon,Z,\left\{F_{n}\right\},f,\mathbf{b}) \geq Q(\varepsilon,Z,f,\mathbf{b}).
\end{equation}
By claim 2, we have that if $P^{B'}(\varepsilon,Z,\left\{F_{n}\right\},f,\mathbf{b})>t$, then $Q(\varepsilon,Z,f,\mathbf{b})>t$. By the arbitrariness of $t$, we have
\begin{equation}  \label{q2}
P^{B'}(\varepsilon,Z,\left\{F_{n}\right\},f,\mathbf{b}) \leq Q(\varepsilon,Z,f,\mathbf{b}).
\end{equation}
Adding $\limsup\limits_{\varepsilon\to 0}$ on the both hand of the equation \eqref{q1} and \eqref{q2}, we have
$$P^{B'}(Z,\left\{F_{n}\right\},f,\mathbf{b}) = Q(Z,f,\mathbf{b}).$$

\end{proof}

\subsection{The properties of the scaled topological pressures}
In this section, we will study the relationships among various scaled topological pressures and their properties.

\begin{prop}\label{p2}
   Let $(X,G)$ be a TDS with a $G$-action and a F{\o}lner  sequence $\left\{F_{n}\right\}$. If $f \in C(X,\mathbb{R})$ and $Z \subseteq X$, then we have
   \begin{enumerate}[(1)]
      \item $P^{B}(Z,\left\{F_{n}\right\},f,\mathbf{b}) \leq \underline{CP}(Z,\left\{F_{n}\right\},f,\mathbf{b}) \leq \overline{CP}(Z,\left\{F_{n}\right\},f,\mathbf{b}).$
      \item if $Z_{1} \subseteq Z_{2}$, then $\mathcal{P}(Z_{1},\left\{F_{n}\right\},f,\mathbf{b}) \leq \mathcal{P}(Z_{2},\left\{F_{n}\right\},f,\mathbf{b})$, where $\mathcal{P} \in \left\{P^{B},\underline{CP},\overline{CP},P^{P}\right\}.$ \label{p2.2}
      \item if $Z=\bigcup_{i \in I}Z_{i}$ with I at most countable, then
      \begin{enumerate}[(3-a)]
         \item $M(\alpha ,\varepsilon ,Z,\left\{F_{n}\right\},f,\mathbf{b}) \leq \sum_{i \in I}M(\alpha ,\varepsilon ,Z_{i},\left\{F_{n}\right\},f,\mathbf{b});$
         \item $M^{\mathcal{P}}(\alpha ,\varepsilon ,Z,\left\{F_{n}\right\},f,\mathbf{b}) \leq \sum_{i \in I}M^{\mathcal{P}}(\alpha ,\varepsilon ,Z_{i},\left\{F_{n}\right\},f,\mathbf{b});$
         \item $P^{B}(Z,\left\{F_{n}\right\},f,\mathbf{b})=\sup_{i \in I}P^{B}(Z_{i},\left\{F_{n}\right\},f,\mathbf{b});$
         \item $P^{P}(Z,\left\{F_{n}\right\},f,\mathbf{b})=\sup_{i \in I}P^{P}(Z_{i},\left\{F_{n}\right\},f,\mathbf{b}).$
      \end{enumerate}
   \end{enumerate}
\end{prop}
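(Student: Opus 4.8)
The plan is to treat Proposition~\ref{p2} as a package of standard facts about Carath\'eodory-type dimension characteristics: each assertion reduces to comparing the admissible covering or packing families, after which the inequality is transferred to the critical-value definitions of the pressures and the limit $\varepsilon\to0$ is taken. For part~(1), the point is that in $R(N,\alpha,\varepsilon,Z,\left\{F_{n}\right\},f,\mathbf{b})$ every ball is built from the single index $N$, whereas $M(N,\alpha,\varepsilon,Z,\left\{F_{n}\right\},f,\mathbf{b})$ permits any indices $n_{i}\ge N$; hence the families admissible for $R$ form a sub-collection of those admissible for $M$, so $M(N,\ldots)\le R(N,\ldots)$ for every $N$. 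Letting $N\to\infty$ gives $M(\alpha,\varepsilon,Z,\ldots)\le\underline{R}(\alpha,\varepsilon,Z,\ldots)\le\overline{R}(\alpha,\varepsilon,Z,\ldots)$, the last inequality because $\liminf\le\limsup$. Translating to critical values (if $\underline{R}(\alpha)=0$ then $M(\alpha)=0$, and so on) and letting $\varepsilon\to0$ yields $P^{B}\le\underline{CP}\le\overline{CP}$.

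Part~(2) is monotonicity by the same template. If $Z_{1}\subseteq Z_{2}$, then every cover of $Z_{2}$ is a cover of $Z_{1}$, so the covering infima for $Z_{1}$ run over a larger collection; dually every packing of $Z_{1}$, i.e.\ every pairwise disjoint family of closed Bowen balls centred in $Z_{1}$, is a packing of $Z_{2}$, so the packing suprema for $Z_{2}$ run over a larger collection. This yields the monotonicity of $M,\underline{R},\overline{R}$ and of $M^{P}$, hence of $M^{\mathcal{P}}$ after the outer infimum over countable covers, and the four pressure inequalities follow at the level of critical values.

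For part~(3) I would first prove the subadditivity statements (3-a) and (3-b), then derive the supremum formulas (3-c) and (3-d) from them together with part~(2). For (3-a), fix $N$ and $\eta>0$ and choose for each $i$ a cover of $Z_{i}$ by balls with indices $\ge N$ of weight at most $M(N,\alpha,\varepsilon,Z_{i},\ldots)+\eta 2^{-i}$; the union is an admissible cover of $Z=\bigcup_{i}Z_{i}$, so $M(N,\alpha,\varepsilon,Z,\ldots)\le\sum_{i}M(N,\alpha,\varepsilon,Z_{i},\ldots)+\eta$, and letting $\eta\to0$ and $N\to\infty$ (monotone convergence on the right) gives (3-a). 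For (3-b) the identical gluing works one level up: near-optimal countable covers $\left\{Z_{i,j}\right\}_{j}$ of the individual $Z_{i}$ for the premeasure $M^{\mathcal{P}}$ assemble into a countable cover $\left\{Z_{i,j}\right\}_{i,j}$ of $Z$, giving $M^{\mathcal{P}}(\alpha,\varepsilon,Z,\ldots)\le\sum_{i}M^{\mathcal{P}}(\alpha,\varepsilon,Z_{i},\ldots)$.

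Finally (3-c) and (3-d) combine both directions: part~(2) gives $\ge$, and for the reverse I fix $s>\sup_{i}P^{B}(Z_{i},\left\{F_{n}\right\},f,\mathbf{b})$, note $P^{B}(\varepsilon,Z_{i},\ldots)\le P^{B}(Z_{i},\ldots)<s$ for every $\varepsilon$ (monotonicity of $P^{B}(\varepsilon,\cdot)$ in $\varepsilon$), whence $M(s,\varepsilon,Z_{i},\ldots)=0$ for all $i$, so (3-a) forces $M(s,\varepsilon,Z,\ldots)=0$ and $P^{B}(\varepsilon,Z,\ldots)\le s$; letting $\varepsilon\to0$ and $s\downarrow\sup_{i}P^{B}(Z_{i},\ldots)$ proves (3-c), and the same argument with $M^{\mathcal{P}}$ and (3-b) proves (3-d). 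I expect the only genuine obstacle to be the subadditivity (3-b) of the packing premeasure, since its extra infimum over countable covers is precisely what makes $M^{\mathcal{P}}$ behave like an outer measure and underlies the packing identity (3-d); the remaining care is bookkeeping, namely coordinating the $N\to\infty$ and $\varepsilon\to0$ limits and pushing a single threshold $s$ through all the sets $Z_{i}$ at once via the $\varepsilon$-monotonicity of $P^{B}(\varepsilon,\cdot)$ and $P^{P}(\varepsilon,\cdot)$.
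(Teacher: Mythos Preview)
Your proposal is correct and follows essentially the same approach as the paper: parts~(1) and~(2) are dispatched directly from the definitions, (3-b) is proved by gluing near-optimal countable covers $\{Z_{i,j}\}_{j}$ of each $Z_{i}$ with geometric slack $\gamma 2^{-i}$, and (3-c), (3-d) are derived from the subadditivity statements together with monotonicity exactly as you outline. The only difference is that you spell out the comparison $M(N,\ldots)\le R(N,\ldots)$ for part~(1) and the limit bookkeeping for (3-a) more explicitly than the paper, which simply asserts these follow from the definitions and by similarity to (3-b).
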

\begin{proof}
   (1) and (2) can be obtained directly by the definitions of the scaled topological pressures. We now show (3-b). Then (3-a) can be proven similarly. Given $\gamma >0$ and $i \in I $, we can find $\left\{Z_{i,j}\right\}_{j \geq 0}$ such that $Z_{i} \subseteq  \bigcup_{j \geq 0}Z_{i,j}$ and
   $$\sum_{j \geq 0}M^{{P}}(\alpha ,\varepsilon ,Z_{i,j},\left\{F_{n}\right\},f,\mathbf{b}) \leq M^{\mathcal{P}}(\alpha ,\varepsilon ,Z_{i},\left\{F_{n}\right\},f,\mathbf{b})+\frac{\gamma}{2^{i}}.$$ Thus
   $$\sum_{i \in I}\sum_{j \geq 0}M^{{P}}(\alpha ,\varepsilon ,Z_{i,j},\left\{F_{n}\right\},f,\mathbf{b}) \leq\sum_{i \in I} M^{\mathcal{P}}(\alpha ,\varepsilon ,Z_{i},\left\{F_{n}\right\},f,\mathbf{b})+2\gamma.$$
   Then it is obvious that
   $$M^{\mathcal{P}}(\alpha ,\varepsilon ,Z,\left\{F_{n}\right\},f,\mathbf{b}) \leq \sum_{i \in I}\sum_{j \geq 0}M^{{P}}(\alpha ,\varepsilon ,Z_{i,j},\left\{F_{n}\right\},f,\mathbf{b}).$$
   Letting $\gamma \to 0$, the desired inequality follows.

   We now show that (3-d) holds and (3-c) can be proven similarly. If $\sup_{i \in I}P^{P}(Z_{i},\left\{F_{n}\right\},f,\mathbf{b})<s$, then for any $\varepsilon>0$ and $i \in I$, $P^{P}(\varepsilon,Z_{i},\left\{F_{n}\right\},f,\mathbf{b})<s$, and thus $M^{\mathcal{P}}(s,\varepsilon ,Z_{i},\left\{F_{n}\right\},f,\mathbf{b})=0$, which implies $$M^{\mathcal{P}}(s,\varepsilon ,Z,\left\{F_{n}\right\},f,\mathbf{b})=0$$ (utilizing (3-b)). Hence $P^{P}(\varepsilon,Z,\left\{F_{n}\right\},f,\mathbf{b}) \leq s$. It follows that
   $$P^{P}(Z,\left\{F_{n}\right\},f,\mathbf{b}) \leq \sup_{i \in I}P^{P}(Z_{i},\left\{F_{n}\right\},f,\mathbf{b}).$$
   The opposite inequality follows from (\ref{p2.2}).
\end{proof}
\begin{lem}\label{l1}
    Let $(X,G)$ be a TDS with a $G$-action and a F{\o}lner  sequence $\left\{F_{n}\right\}$ satisfying Condition \ref{con1}. If $f \in C(X,\mathbb{R})$, $Z \subseteq X$, then we have
   $$P^{B}(Z,\left\{F_{n}\right\},f,\mathbf{b}) \leq P^{P}(Z,\left\{F_{n}\right\},f,\mathbf{b}) \leq \overline{CP}(Z,\left\{F_{n}\right\},f,\mathbf{b}).$$
\end{lem}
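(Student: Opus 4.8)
The plan is to establish the two inequalities separately, in each case comparing the relevant pre-measures at a single \emph{fixed} scale, then passing to the critical values in $\alpha$ and letting $\varepsilon\to 0$.

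For $P^{B}\le P^{P}$ the idea is the classical observation that a maximal separated set is simultaneously a packing and a cover. Fix $Z'\subseteq X$, $\alpha\in\mathbb{R}$, $\varepsilon>0$ and $N\in\mathbb{N}$, and let $E\subseteq Z'$ be a maximal $(d_{F_{N}},2\varepsilon)$-separated set. Then the closed balls $\{\overline{B}_{F_{N}}(x,\varepsilon)\}_{x\in E}$ are pairwise disjoint and centred in $Z'$, so they are admissible for $M^{P}(N,\alpha,\varepsilon,Z')$, while by maximality $\{B_{F_{N}}(x,2\varepsilon)\}_{x\in E}$ covers $Z'$ and is admissible for $M(N,\alpha,2\varepsilon,Z')$. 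Since the weight $e^{-\alpha b(|F_{N}|)+f_{F_{N}}(x)}$ attached to each $x\in E$ is identical in the two sums, I get $M(N,\alpha,2\varepsilon,Z')\le M^{P}(N,\alpha,\varepsilon,Z')$, and letting $N\to\infty$ gives $M(\alpha,2\varepsilon,Z')\le M^{P}(\alpha,\varepsilon,Z')$. Applying this to each piece $Z_{i}$ of an arbitrary countable cover $Z=\bigcup_{i}Z_{i}$, combining with the countable subadditivity of $M$ from Proposition \ref{p2}(3-a), and taking the infimum over such covers yields $M(\alpha,2\varepsilon,Z)\le M^{\mathcal{P}}(\alpha,\varepsilon,Z)$. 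Comparing critical values then gives $P^{B}(2\varepsilon,Z)\le P^{P}(\varepsilon,Z)$, and $\varepsilon\to 0$ finishes this half.

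For $P^{P}\le\overline{CP}$ I would first use the trivial decomposition $Z=Z$ to reduce to bounding the pre-measure $M^{P}(\alpha,\varepsilon,Z)$, since $M^{\mathcal{P}}\le M^{P}$. Given a pairwise disjoint family $\{\overline{B}_{F_{n_{i}}}(x_{i},\varepsilon)\}$ with $x_{i}\in Z$ and $n_{i}\ge N$, I group the balls according to their scale $n$. For a fixed $n$ the centres with $n_{i}=n$ are $(d_{F_{n}},2\varepsilon)$-separated, so any cover of $Z$ by balls $B_{F_{n}}(y_{j},\varepsilon)$ assigns at most one such centre to each $y_{j}$; together with $f_{F_{n}}(x_{i})\le f_{F_{n}}(y_{j},\varepsilon)$, this bounds the scale-$n$ contribution by the primed single-scale covering quantity $\mathcal{R}(n,\alpha,\varepsilon,Z)$. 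Now fix $\alpha>\alpha'>\overline{CP'}(\varepsilon,Z)$. Because the factor $e^{-(\alpha-\alpha')b(|F_{n}|)}$ does not depend on the covering element, it factors out of the infimum, so $\mathcal{R}(n,\alpha,\varepsilon,Z)=e^{-(\alpha-\alpha')b(|F_{n}|)}\mathcal{R}(n,\alpha',\varepsilon,Z)$, and $\overline{CP'}(\varepsilon,Z)<\alpha'$ forces $\mathcal{R}(n,\alpha',\varepsilon,Z)\le 1$ for all large $n$. Summing over scales therefore yields $M^{P}(N,\alpha,\varepsilon,Z)\le\sum_{n\ge N}e^{-(\alpha-\alpha')b(|F_{n}|)}$.

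The step I expect to be the crux is the convergence of this last series. Here Condition \ref{con1} together with the strict monotonicity of $\{F_{n}\}$ is exactly what is needed: strict monotonicity forces $|F_{n}|\ge n$, so $b(|F_{n}|)\ge|F_{n}|\ge n$ and $\sum_{n\ge N}e^{-(\alpha-\alpha')b(|F_{n}|)}\le\sum_{n\ge N}e^{-(\alpha-\alpha')n}\to 0$ as $N\to\infty$. Hence $M^{P}(\alpha,\varepsilon,Z)=0$, whence $M^{\mathcal{P}}(\alpha,\varepsilon,Z)=0$ and $P^{P}(\varepsilon,Z)\le\overline{CP'}(\varepsilon,Z)$; letting $\varepsilon\to 0$ and invoking Proposition \ref{p1} to replace $\overline{CP'}$ by $\overline{CP}$ gives $P^{P}(Z)\le\overline{CP}(Z)$. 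Apart from this summability point, the remaining work is routine: the boundary issues in the maximal-separated-set argument (matching open and closed balls and the radii $\varepsilon$ versus $2\varepsilon$) and the uniform continuity estimates for the weights are handled exactly as in the proof of Proposition \ref{p1}.
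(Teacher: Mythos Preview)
Your proposal is correct and follows essentially the same approach as the paper: the first inequality via a maximal separated set serving simultaneously as a packing and a cover (then subadditivity of $M$ and passage to $M^{\mathcal{P}}$), and the second via grouping a packing by scale, injecting the scale-$n$ centres into an $\varepsilon$-cover, and using the summability of $\sum_{n}e^{-c\,b(|F_{n}|)}$ guaranteed by Condition~\ref{con1}. The only cosmetic differences are that the paper uses radius $2\varepsilon+\delta$ to sidestep the open/closed ball issue you flag, and runs the second half in contrapositive form (starting from $M^{P}=+\infty$ and extracting a single scale $k$ where the separated-set weight is bounded below) rather than your direct upper bound $M^{P}(N,\alpha,\varepsilon,Z)\le\sum_{n\ge N}e^{-(\alpha-\alpha')b(|F_{n}|)}$.
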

\begin{proof}
   We first show that
   $$P^{B}(Z,\left\{F_{n}\right\},f,\mathbf{b}) \leq P^{P}(Z,\left\{F_{n}\right\},f,\mathbf{b}).$$
   Suppose that  $P^{B}(Z,\left\{F_{n}\right\},f,\mathbf{b})>s>-\infty$. For any $ \varepsilon >0$ and $N \in \mathbb{N}$, we set
   $$\mathcal{F}_{N,\varepsilon}=\left\{ \mathcal{F}:x_{i} \in Z,\  \mathcal{F}=\left\{ \overline{B}_{F_{N}}(x_{i},\varepsilon)\right\} \text{is disjoint family} \right\}.$$
   Take $\mathcal{F}(N,\varepsilon,Z) \in \mathcal{F}_{N,\varepsilon}$ such that $\left| \mathcal{F}(N,\varepsilon,Z) \right|=\max_{\mathcal{F} \in \mathcal{F}_{N,\varepsilon}}\left \{ \left | \mathcal{F} \right |  \right \} $. We denote
   $$\mathcal{F}(N,\varepsilon,Z)=\left \{ \overline{B}_{F_{N}}(x_{i},\varepsilon ),i=1,2,\cdots ,\left | \mathcal{F}(N,\varepsilon ,Z) \right |   \right \}.$$
   It is not hard to see that
   $$Z\subseteq \bigcup_{i=1}^{\left | \mathcal{F}(N,\varepsilon ,Z) \right | }B_{F_{N}}(x_{i},2\varepsilon +\delta),\ \forall \delta >0.$$
   Then for any $s \in \mathbb{R}$, we have
   $$M(N,s,2\varepsilon + \delta ,Z,\left\{F_{n}\right\},f,\mathbf{b}) \leq e^{-b(|F_{N}|)s}\sum_{i=1}^{\left | \mathcal{F}(N,\varepsilon ,Z) \right | }e^{f_{F_{N}}(x_{i})} \leq M^{P}(N,s,\varepsilon ,Z,\left\{F_{n}\right\},f,\mathbf{b}) .$$
   Thus letting $N \to \infty$, we have
   $$M(s,2\varepsilon + \delta ,Z,\left\{F_{n}\right\},f,\mathbf{b}) \leq M^{P}(s,\varepsilon ,Z,\left\{F_{n}\right\},f,\mathbf{b}).$$
   Let $\left\{ Z_{i} \right\}_{i \in I}$ be a cover of $Z$. Then we have
   $$M(s,2\varepsilon + \delta ,Z_{i},\left\{F_{n}\right\},f,\mathbf{b}) \leq M^{P}(s,\varepsilon ,Z_{i},\left\{F_{n}\right\},f,\mathbf{b}).$$
   Thus
   $$\begin{aligned}
      M(s,2\varepsilon + \delta ,Z,\left\{F_{n}\right\},f,\mathbf{b}) &\leq \sum_{i \in I}M(s,2\varepsilon + \delta ,Z_{i},\left\{F_{n}\right\},f,\mathbf{b})\\
      &\leq  \sum_{i \in I}M^{P}(s,\varepsilon ,Z_{i},\left\{F_{n}\right\},f,\mathbf{b}).
   \end{aligned}$$
This implies $$M(s,2\varepsilon + \delta ,Z,\left\{F_{n}\right\},f,\mathbf{b}) \leq M^{\mathcal{P}}(s,\varepsilon ,Z,\left\{F_{n}\right\},f,\mathbf{b}).$$
Since $P^{B}(Z,\left\{F_{n}\right\},f,\mathbf{b})>s$, we have $M(s,2\varepsilon + \delta ,Z,\left\{F_{n}\right\},f,\mathbf{b}) \geq 1$, when $\varepsilon$ and $\delta$ are small enough. Then
$$M^{\mathcal{P}}(s,\varepsilon ,Z,\left\{F_{n}\right\},f,\mathbf{b}) \geq 1 .$$
Thus
$$P^{P}(\varepsilon,Z,\left\{F_{n}\right\},f,\mathbf{b}) \geq s.$$
Letting $\varepsilon \to 0$, we have $P^{P}(Z,\left\{F_{n}\right\},f,\mathbf{b}) \geq s$. Hence $P^{B}(Z,\left\{F_{n}\right\},f,\mathbf{b}) \leq P^{P}(Z,\left\{F_{n}\right\},f,\mathbf{b})$.

We shall show $P^{P}(Z,\left\{F_{n}\right\},f,\mathbf{b}) \leq \overline{CP}(Z,\left\{F_{n}\right\},f,\mathbf{b})$.

Choose $P^{P}(Z,\left\{F_{n}\right\},f,\mathbf{b})>s>t>-\infty$. Then there exists $\delta>0$ such that for any $\varepsilon \in (0,\delta)$,
$$P^{P}(\varepsilon,Z,\left\{F_{n}\right\},f,\mathbf{b}) \geq s$$
and
$$ M^{P}(s,\varepsilon ,Z,\left\{F_{n}\right\},f,\mathbf{b}) \geq  M^{\mathcal{P}}(s,\varepsilon ,Z,\left\{F_{n}\right\},f,\mathbf{b})=+\infty.$$
Since $\left\{F_{n}\right\}$ and $\mathbf{b}$ satisfy Condition \ref{con1}, $\sum_{k \geq 1}e^{b(|F_{k}|) (t-s)}$ converges. Then we can assume that
$$\sum_{k \geq 1}e^{b(|F_{k}|)(t-s)}=M.$$
Hence for any $ N \in \mathbb{N}$, there exists a countable pairwise disjoint family $\left\{\overline{B}_{F_{n_{i}}} (x_{i},\varepsilon )\right\}_{i}$ such that $x_{i} \in Z$, $n_{i} \geq N$ for all $i$ and $ \sum_{i} e^{-sb( | F_{n_{i} }|)+f_{F_{n_{i} } } (x_{i}) } >M$. For each $k$, let
$$m_{k}=\left \{ x _{i}:n_{i}=k\right \} .$$
Then
$$\sum_{k=N}^{\infty}\sum _{x \in m_{k}}e^{f_{F_{k}}(x)}e^{-b(|F_{k}|)s }>M.$$
It is not hard to see that there exists $k \geq N$ such that
$$\sum_{x \in m_{k}}e^{f_{F_{k}}(x)}e^{-b(|F_{k}|) t }  \geq 1-e^{t-s}.$$
Taking a collection $\left \{ B_{F_{k} } (y_{i},\frac{\varepsilon }{2} )\right \} _{i \in I}$, such that $Z \subseteq \bigcup_{i \in I }B_{F_{k}}(y_{i},\frac{\varepsilon}{2})$, one can obtain that for any $x_{1},x_{2} \in m_{k}$, there exist $y_{1}$, $ y_{2}$ with $y_{1} \neq y_{2}$ such that $x_{i} \in B_{F_{k}}(y_{i},\frac{\varepsilon}{2})$, $i=1,2$. Then
$$\mathcal{R}(k,t,\frac{\varepsilon}{2},Z,\left\{F_{n}\right\},f,\mathbf{b}) \geq \sum_{x \in m_{k}}e^{f_{F_{k}}(x)}e^{-b(|F_{k}|)t} \geq 1-e^{t-s}>0.$$
Thus $\overline{CP'}(\frac{\varepsilon}{2},Z,\left\{F_{n}\right\},f,\mathbf{b}) \geq t$. Letting $\varepsilon \to 0$, we can get that
$$\overline{CP}(Z,\left\{F_{n}\right\},f,\mathbf{b}) \geq t.$$
Hence $P^{P}(Z,\left\{F_{n}\right\},f,\mathbf{b}) \leq \overline{CP}(Z,\left\{F_{n}\right\},f,\mathbf{b})$.
\end{proof}
In \cite{Zh}, Zhong and Chen proved the following theorem when $G=\mathbb{Z}$ and $b(|F_{n}|)=|F_{n}|$ for all $n \in \mathbb{N}$. We shall show that the scaled packing topological pressure can be determined by the scaled Bowen topological pressure.
\begin{thm}\label{T1}
   Let $(X,G)$ be a TDS with a $G$-action and a F{\o}lner sequence $\left\{F_{n}\right\}$ satisfying Condition \ref{con1}. If $f \in C(X,\mathbb{R})$, $Z \subseteq X$ is compact and $G$-invariant, then
   $$P^{B}(Z,\left\{F_{n}\right\},f,\mathbf{b}) = P^{P}(Z,\left\{F_{n}\right\},f,\mathbf{b}) = \underline{CP}(Z,\left\{F_{n}\right\},f,\mathbf{b})= \overline{CP}(Z,\left\{F_{n}\right\},f,\mathbf{b}).$$
\end{thm}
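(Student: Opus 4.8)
The plan is to lean on the two chains of inequalities already in hand. Proposition~\ref{p2}(1) gives $P^{B}\le\underline{CP}\le\overline{CP}$ and Lemma~\ref{l1} gives $P^{B}\le P^{P}\le\overline{CP}$, so all four quantities are trapped between the smallest, $P^{B}$, and the largest, $\overline{CP}$. Hence the entire theorem reduces to the single reverse inequality
$$\overline{CP}(Z,\{F_{n}\},f,\mathbf{b})\le P^{B}(Z,\{F_{n}\},f,\mathbf{b}),$$
and it is exactly here that compactness and $G$-invariance of $Z$ are indispensable, since for a general set the upper capacity can be strictly larger (just as upper box dimension exceeds Hausdorff dimension).

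First I would recast the upper capacity in terms of weighted covering numbers at a fixed Følner scale. In the definition of $R(N,\alpha,\varepsilon,Z,\{F_{n}\},f,\mathbf{b})$ the factor $e^{-\alpha b(|F_{N}|)}$ is constant over the whole cover, so it pulls out of the infimum and $R(N,\alpha,\varepsilon)=e^{-\alpha b(|F_{N}|)}\Lambda(N,\varepsilon)$, where $\Lambda(N,\varepsilon)=\inf\{\sum_{i}e^{f_{F_{N}}(x_{i})}\}$ is taken over all covers of $Z$ by balls $B_{F_{N}}(x_{i},\varepsilon)$. Since $b(|F_{N}|)\to+\infty$, inspecting where $\overline{R}(\alpha,\varepsilon)=\limsup_{N}R(N,\alpha,\varepsilon)$ jumps from $+\infty$ to $0$ yields the clean formula $\overline{CP}(\varepsilon,Z,\{F_{n}\},f,\mathbf{b})=\limsup_{N\to+\infty}\frac{1}{b(|F_{N}|)}\log\Lambda(N,\varepsilon)$. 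The problem is thus to bound this $\limsup$ by $P^{B}$.

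Next I would fix $s>P^{B}(Z,\{F_{n}\},f,\mathbf{b})$; because $P^{B}(\varepsilon,Z,\cdots)$ increases as $\varepsilon\to0$ this gives $M(s,\varepsilon,Z,\{F_{n}\},f,\mathbf{b})=0$ for every $\varepsilon>0$, and as $M(N,s,\varepsilon,Z,\cdots)$ increases to $M(s,\varepsilon,Z,\cdots)$ we may, for each $N$, pick a cover of $Z$ by Bowen balls $B_{F_{n_{i}}}(x_{i},\varepsilon)$ with $n_{i}\ge N$ and $\sum_{i}e^{-s\,b(|F_{n_{i}}|)+f_{F_{n_{i}}}(x_{i})}$ arbitrarily small. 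Compactness of $Z$ lets me pass to a finite subcover. The core of the argument is then to convert this variable-scale, Hausdorff-type cover into control of the fixed-scale covering number $\Lambda(M,\varepsilon')$ for large $M$: using the additivity $f_{E\cup F}=f_{E}+f_{F}$ on disjoint blocks together with the $G$-invariance of $Z$ (which allows the balls to be translated by group elements, so that pieces living at different Følner scales can be concatenated) and the Følner property in the form of Proposition~\ref{p6} to render the $\partial_{F}$-boundary overlaps negligible, one establishes an approximate sub-multiplicativity of the weighted covering numbers. Feeding the finite Bowen subcover into this estimate bounds $\frac{1}{b(|F_{M}|)}\log\Lambda(M,\varepsilon')$ by $s$ up to an error that vanishes as the sets become more invariant and $\varepsilon'\to0$, whence $\overline{CP}(Z,\cdots)\le s$ and therefore $\overline{CP}(Z,\cdots)\le P^{B}(Z,\cdots)$.

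The main obstacle I anticipate is precisely this cross-scale conversion. Matching a family of Bowen balls whose scales $n_{i}$ range over many distinct large Følner sets against covers at a single scale $F_{M}$ is delicate in a general amenable group, where $F_{n_{i}}$ and $F_{M}$ need not be nested in any algebraically convenient way; one must track the boundaries $\partial_{F}$ carefully and ensure the accumulated boundary and uniform-continuity weights are of order $o(b(|F_{M}|))$, uniformly enough to survive both the $\limsup$ and the scaling by $\mathbf{b}$. Here Condition~\ref{con1}, namely $b(|F_{n}|)\ge|F_{n}|$, is what guarantees that the linear-in-$|F|$ boundary and continuity errors are absorbed by the scale $b(|F_{n}|)$, exactly as it was used in Proposition~\ref{p1}; I would invoke it repeatedly to swallow the error terms.
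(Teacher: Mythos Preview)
Your reduction to the single inequality $\overline{CP}(Z,\{F_{n}\},f,\mathbf{b})\le P^{B}(Z,\{F_{n}\},f,\mathbf{b})$ via Proposition~\ref{p2}(1) and Lemma~\ref{l1} matches the paper exactly. Beyond that point, however, the paper's argument is far more elementary than the route you propose, and in particular it does not invoke $G$-invariance of $Z$ at all, nor any concatenation, translation, or sub-multiplicativity machinery.

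The paper's entire cross-scale conversion rests on the trivial containment $B_{F_{n_{i}}}(x_{i},\varepsilon)\subseteq B_{F_{N}}(x_{i},\varepsilon)$, valid because the F{\o}lner sequence is, by the standing assumption at the beginning of Section~\ref{sec2}, strictly monotonically increasing, so $F_{N}\subseteq F_{n_{i}}$ whenever $n_{i}\ge N$. A Hausdorff-type cover of $Z$ at mixed scales $n_{i}\ge N$ with $\sum_{i}e^{-s\,b(|F_{n_{i}}|)+f_{F_{n_{i}}}(x_{i})}<1$ is therefore automatically a cover at the fixed scale $N$, simply by enlarging each ball. Compactness allows one to pass to a finite subcover, so the finitely many scales $n_{i}$ are bounded; writing $|F_{n_{i}}|\le|F_{N}|+k_{1}$ and $b(|F_{n_{i}}|)\le b(|F_{N}|)+k_{2}$, one has $f_{F_{N}}(x_{i})\le f_{F_{n_{i}}}(x_{i})+k_{1}\|f\|$, and the weights in the fixed-scale sum $\sum_{i}e^{-s\,b(|F_{N}|)+f_{F_{N}}(x_{i})}$ differ from the original ones by a factor bounded by $e^{k_{1}\|f\|}\max\{e^{k_{2}s},1\}$. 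That is the whole argument the paper offers for bounding $R(N,s,\varepsilon,Z,\{F_{n}\},f,\mathbf{b})$.

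Your plan through approximate sub-multiplicativity of the weighted covering numbers $\Lambda(N,\varepsilon)$, using $G$-invariance to translate and patch balls together and Proposition~\ref{p6} to control F{\o}lner boundaries, is essentially an Ornstein--Weiss-style device aimed at the obstacle you explicitly name (``$F_{n_{i}}$ and $F_{M}$ need not be nested in any algebraically convenient way''). That obstacle is removed outright by the monotonicity hypothesis on $\{F_{n}\}$, which you have overlooked. Your approach is not wrong in spirit and could perhaps be made to work, but it is heavily over-engineered for this theorem and remains only a sketch at its most delicate step; the paper shows that the simple inclusion of Bowen balls already does the job, with compactness alone (not $G$-invariance) providing the finiteness needed to compare scales.
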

\begin{proof}
   we shall prove $P^{B}(Z,\left\{F_{n}\right\},f,\mathbf{b}) \geq \overline{CP}(Z,\left\{F_{n}\right\},f,\mathbf{b})$. Then combining with Proposition \ref{p2} and Lemma \ref{l1}, the theorem is proved.

   Let $s>P^{B}(Z,\left\{F_{n}\right\},f,\mathbf{b})$. Then there exists $\varepsilon_{0}>0$ such that
   $$s>P^{B}(\varepsilon,Z,\left\{F_{n}\right\},f,\mathbf{b}),$$
   for every $\varepsilon \in (0,\varepsilon_{0})$. Hence $M(s,\varepsilon,Z,\left\{F_{n}\right\},f,\mathbf{b})=0$. This implies
   $$M(N,s,\varepsilon,Z,\left\{F_{n}\right\},f,\mathbf{b})=0,$$
   for any $N \in \mathbb{N}$. Then for any $\varepsilon \in (0,\varepsilon_{0})$, there exists $\left\{B_{F_{n_{i}}} (x_{i},\varepsilon )\right\}_{i}$ such that $Z \subseteq \bigcup_{i}B_{F_{n_{i}}} (x_{i},\varepsilon )$ and $\sum_{i} e^{-s b(| F_{n_{i} }|)+f_{F_{n_{i} } } (x_{i}) } < 1$, where $x_{i} \in Z$, $n_{i} \geq N$ for all $i$.

   We can assume that $\left\{B_{F_{n_{i}}} (x_{i},\varepsilon )\right\}_{i}$ is finite since Z is compact. Then there exists $k_{1},k_{2} \in \mathbb{N}$ such that $\left | F_{n_{i}} \right | \in \left [ \left | F_{N} \right | ,\left | F_{N} \right | +k_{1} \right ] $ and $b(|F_{n_{i}}|)\in [b(|F_{N}|),b(|F_{N}|)+k_{2}]$. It is obvious that $B_{F_{n_{i}}} (x_{i},\varepsilon ) \subseteq B_{F_{N}} (x_{i},\varepsilon )$. This implies that  $Z \subset \bigcup_{i}B_{F_{N}} (x_{i},\varepsilon )$ and $f_{F_{N}}(x_{i}) \leq f_{F_{n_{i}}}(x_{i})+k_{1}\left \| f \right \| $, where $\left \| f \right \| := \sup_{x \in Z}|f(x)|$. Hence
   $$\begin{aligned}
      \sum_{i}e^{-sb(| F_{N} |) +f_{F_{N}} (x_{i}) } & \leq e^{k_{1}\left \| f \right \| }\sum_{i}e^{-sb(| F_{n_{i}}|) +f_{F_{n_{i}}} (x_{i}) } e^{sb(| F_{n_{i}}|)-sb(| F_{N} |)  }\\
      & \leq e^{k_{1} \left \| f \right \|}\text{max}\left\{ e^{k_{2}s},1 \right\}<+\infty.
   \end{aligned}$$
    Thus $\overline{R}(s,\varepsilon,Z,\left\{F_{n}\right\},f,\mathbf{b})< +\infty$, and then $s \geq \overline{CP}(Z,\left\{F_{n}\right\},f,\mathbf{b})$.
\end{proof}
\section{Scaled measure-theoretic pressures}\label{sec3}%%%%
We denote the set of all Borel probability measures, $G$-invariant Borel probability measures and $G$-invariant ergodic Borel probability measures on $X$ by $M(X)$, $M(X,G)$ and $E(X,G)$ respectively.

Let $(X,G)$ be a TDS with a $G$-action, $f \in C(X,\mathbb{R})$ and $\mu \in M(X)$. We define
$$\begin{aligned}
\underline{P}_{\mu }(x,\left \{ F_{n} \right \},f,\mathbf{b} ):=\lim_{\varepsilon  \to 0} \liminf_{n \to +\infty} \frac{-\log{\mu}(B_{F_{n}}(x,\varepsilon )) +f_{F_{n}}(x)}{b(| F_{n}|) },\\
\overline{P}_{\mu }(x,\left \{ F_{n} \right \},f,\mathbf{b} ):=\lim_{\varepsilon  \to 0} \limsup_{n \to +\infty} \frac{-\log{\mu}(B_{F_{n}}(x,\varepsilon )) +f_{F_{n}}(x)}{b(| F_{n} |) }.
\end{aligned}$$
\begin{defn}\label{d1}
Let $Z\subseteq X$ be a nonempty set. The \textit{scaled measure-theoretic lower} and \textit{scaled upper local pressures} on the subset $Z$ for $\mu$ with respect to $f$, $\mathbf{b}$ and $\left\{F_{n}\right\}$ are defined by
$$
\underline{P}_{\mu }(Z,\left \{ F_{n} \right \},f ,\mathbf{b})=\int_{Z} \underline{P}_{\mu }(x,\left \{ F_{n} \right \},f,\mathbf{b} ) \ \text{d}\mu (x),\
\overline{P}_{\mu }(Z,\left \{ F_{n} \right \},f,\mathbf{b} )=\int_{Z} \overline{P}_{\mu }(x,\left \{ F_{n} \right \},f,\mathbf{b} ) \ \text{d}\mu (x).
$$
When $Z=X$, $\underline{P}_{\mu }(Z,\left \{ F_{n} \right \},f,\mathbf{b} )$ and $\overline{P}_{\mu }(Z,\left \{ F_{n} \right \},f,\mathbf{b} )$  can be simplified as $\underline{P}_{\mu }(\left \{ F_{n} \right \},f,\mathbf{b} )$ and $\overline{P}_{\mu }(\left \{ F_{n} \right \},f,\mathbf{b} )$, respectively.
\end{defn}
\begin{defn}\label{d2}
   We call the following quantities
   $$\begin{aligned}
      P_{\mu}^{B}(\left \{ F_{n} \right \}, f,\mathbf{b}):&=\lim_{\varepsilon  \to 0} \lim_{\delta  \to 0} \inf\left \{ P^{B}(\varepsilon ,Z,\left \{ F_{n} \right \}, f,\mathbf{b}):\mu(Z) \geq 1-\delta \right \}\\
      &=\lim_{\varepsilon  \to 0} \lim_{\delta  \to 0} \inf\left \{ P^{B'}(\varepsilon ,Z,\left \{ F_{n} \right \}, f,\mathbf{b}):\mu(Z) \geq 1-\delta \right \},\\
      \underline{CP}_{\mu}(\left \{ F_{n} \right \}, f,\mathbf{b}):&=\lim_{\varepsilon  \to 0} \lim_{\delta  \to 0} \inf\left \{\underline{CP}(\varepsilon ,Z,\left \{ F_{n} \right \}, f,\mathbf{b}):\mu(Z) \geq 1-\delta \right \}\\
      &=\lim_{\varepsilon  \to 0} \lim_{\delta  \to 0} \inf\left \{\underline{CP'}(\varepsilon ,Z,\left \{ F_{n} \right \}, f,\mathbf{b}):\mu(Z) \geq 1-\delta \right \},  \\
      \overline{CP}_{\mu}(\left \{ F_{n} \right \}, f,\mathbf{b}):&=\lim_{\varepsilon  \to 0} \lim_{\delta  \to 0} \inf\left \{\overline{CP}(\varepsilon ,Z,\left \{ F_{n} \right \}, f,\mathbf{b}):\mu(Z) \geq 1-\delta \right \} \\
      &=\lim_{\varepsilon  \to 0} \lim_{\delta  \to 0} \inf\left \{\overline{CP'}(\varepsilon ,Z,\left \{ F_{n} \right \}, f,\mathbf{b}):\mu(Z) \geq 1-\delta \right \},\\
      P_{\mu}^{P}(\left \{ F_{n} \right \}, f,\mathbf{b}):&=\lim_{\varepsilon  \to 0} \lim_{\delta  \to 0} \inf\left \{ P^{P}(\varepsilon ,Z,\left \{ F_{n} \right \}, f,\mathbf{b}):\mu(Z) \geq 1-\delta \right \}\\
      &=\lim_{\varepsilon  \to 0} \lim_{\delta  \to 0} \inf\left \{ P^{P'}(\varepsilon ,Z,\left \{ F_{n} \right \}, f,\mathbf{b}):\mu(Z) \geq 1-\delta \right \}
   \end{aligned}$$
   \textit{scaled Pesin-Pitskel, scaled lower capacity, scaled upper capacity}, and \textit{scaled packing topological pressures} for $\mu$ with respect to $f$, $\mathbf{b}$ and $\left\{F_{n}\right\}$.
\end{defn}
Katok \cite{Ka} introduced a type of measure-theoretic entropy. Recently, Wang \cite{Wan} studied the dimension types of measure-theoretic entropy. Zhong and Chen extended this entropy to pressure in \cite{Zh}. We shall investigate the dimension types of the scaled measure-theoretic pressure in the sense of Katok with a $G$-action.

Given $\mu \in M(X)$, $N \in \mathbb{N}$, $\alpha \in \mathbb{R}$, $\varepsilon>0$, $0<\delta<1$ and $f \in C(X,\mathbb{R})$, we define
\begin{equation}
M_{\mu}(N,\alpha ,\varepsilon ,\delta,\left\{F_{n}\right\},f,\mathbf{b})= \inf\left \{ \sum_{i} e^{-\alpha b(| F_{n_{i} }|)+f_{F_{n_{i} } } (x_{i}) } :\mu(\bigcup_{i}B_{F_{n_{i}}} (x_{i},\varepsilon )) \geq 1-\delta \right \},   \label{eq:4.1}
\end{equation}
where the infimum is taken over all finite or countable collections of $\left\{B_{F_{n_{i}}} (x_{i},\varepsilon )\right\}_{i}$ such that $x_{i} \in X$, $n_{i} \geq n$, and $\mu(\bigcup_{i}B_{F_{n_{i}}} (x_{i},\varepsilon )) \geq 1-\delta$. Likewise, we define
\begin{equation}
R_{\mu}(N,\alpha ,\varepsilon ,\delta,\left\{F_{n}\right\},f,\mathbf{b})=\inf\left \{ \sum_{i} e^{-\alpha b(| F_{N}|)+f_{F_{N} } (x_{i}) } :\mu(\bigcup_{i}B_{F_{N}} (x_{i},\varepsilon )) \geq 1-\delta \right \},   \label{eq:4.2}
\end{equation}
where the infimum is taken over all finite or countable collections of $\left\{B_{F_{N}} (x_{i},\varepsilon )\right\}_{i}$ such that $x_{i} \in X$, and $\mu(\bigcup_{i}B_{F_{N}} (x_{i},\varepsilon )) \geq 1-\delta$.

Let
$$\begin{aligned}
   M_{\mu}(\alpha ,\varepsilon ,\delta,\left\{F_{n}\right\},f,\mathbf{b})&=\lim_{N \to +\infty}M_{\mu}(N,\alpha ,\varepsilon ,\delta,\left\{F_{n}\right\},f,\mathbf{b}),\\
   \underline{R}_{\mu}(\alpha ,\varepsilon ,\delta,\left\{F_{n}\right\},f,\mathbf{b})&=\liminf_{N \to +\infty}R_{\mu}(N,\alpha ,\varepsilon ,\delta,\left\{F_{n}\right\},f,\mathbf{b}),\\
   \overline{R}_{\mu}(\alpha ,\varepsilon ,\delta,\left\{F_{n}\right\},f,\mathbf{b})&=\limsup_{N \to +\infty}R_{\mu}(N,\alpha ,\varepsilon ,\delta,\left\{F_{n}\right\},f,\mathbf{b}).
\end{aligned}$$
Define
$$M_{\mu}^{\mathcal{P}}(\alpha ,\varepsilon ,\delta,\left\{F_{n}\right\},f,\mathbf{b})=\inf\ \left \{\sum_{i=1}^{\infty}M^{P}(\alpha ,\varepsilon ,Z_{i},\left\{F_{n}\right\},f,\mathbf{b}) :\mu(\bigcup_{i=1}^{\infty}Z_{i}) \geq 1-\delta \right \}.$$
When $\alpha$ goes from $-\infty$ to $+\infty$, the quantities
$$ M_{\mu}(\alpha ,\varepsilon ,\delta,\left\{F_{n}\right\},f,\mathbf{b}),\ \underline{R}_{\mu}(\alpha ,\varepsilon ,\delta,\left\{F_{n}\right\},f,\mathbf{b}),\ \overline{R}_{\mu}(\alpha ,\varepsilon ,\delta,\left\{F_{n}\right\},f,\mathbf{b}), \text{and} \ M_{\mu}^{\mathcal{P}}(\alpha ,\varepsilon ,\delta,\left\{F_{n}\right\},f,\mathbf{b})$$
jump from $+\infty$ to $0$ at critical values respectively. Hence we can define the numbers
$$\begin{aligned}
   P_{\mu}^{KB}(\varepsilon ,\delta,\left\{F_{n}\right\},f,\mathbf{b})
   &=\sup\left\{\alpha:M_{\mu}(\alpha ,\varepsilon ,\delta,\left\{F_{n}\right\},f,\mathbf{b})=+\infty\right\}\\
   &=\inf\left\{\alpha:M_{\mu}(\alpha ,\varepsilon ,\delta,\left\{F_{n}\right\},f,\mathbf{b})=0\right\},\\
   \overline{CP}_{\mu}^{K}(\varepsilon ,\delta,\left\{F_{n}\right\},f,\mathbf{b})
   &=\sup\left\{\alpha:\overline{R}_{\mu}(\alpha ,\varepsilon ,\delta,\left\{F_{n}\right\},f,\mathbf{b})=+\infty\right\}\\
   &=\inf\left\{\alpha:\overline{R}_{\mu}(\alpha ,\varepsilon ,\delta,\left\{F_{n}\right\},f,\mathbf{b})=0\right\},\\
   \underline{CP}_{\mu}^{K}(\varepsilon ,\delta,\left\{F_{n}\right\},f,\mathbf{b})
   &=\sup\left\{\alpha:\underline{R}_{\mu}(\alpha ,\varepsilon ,\delta,\left\{F_{n}\right\},f,\mathbf{b})=+\infty\right\}\\
   &=\inf\left\{\alpha:\underline{R}_{\mu}(\alpha ,\varepsilon ,\delta,\left\{F_{n}\right\},f,\mathbf{b})=0\right\},\\
   P^{KP}_{\mu}(\varepsilon ,\delta,\left\{F_{n}\right\},f,\mathbf{b})
   &=\sup\left\{\alpha:M^{\mathcal{P} }_{\mu}(\alpha ,\varepsilon ,\delta,\left\{F_{n}\right\},f,\mathbf{b})=+\infty\right\}\\
   &=\inf\left\{\alpha:M^{\mathcal{P} }_{\mu}(\alpha ,\varepsilon ,\delta,\left\{F_{n}\right\},f,\mathbf{b})=0\right\}.
\end{aligned}$$
\begin{defn}\label{d3}
We call the following quantities
$$\begin{aligned}
   P_{\mu}^{KB}(\left\{F_{n}\right\},f,\mathbf{b})&=\lim_{\varepsilon  \to 0} \lim_{\delta \to 0} P_{\mu}^{KB}(\varepsilon ,\delta,\left\{F_{n}\right\},f,\mathbf{b}),\\
   \overline{CP}_{\mu}^{K}(\left\{F_{n}\right\},f,\mathbf{b})&=\lim_{\varepsilon  \to 0} \lim_{\delta \to 0} \overline{CP}_{\mu}^{K}(\varepsilon ,\delta,\left\{F_{n}\right\},f,\mathbf{b}),\\
   \underline{CP}_{\mu}^{K}(\left\{F_{n}\right\},f,\mathbf{b})&=\lim_{\varepsilon  \to 0} \lim_{\delta \to 0}\underline{CP}_{\mu}^{K}(\varepsilon ,\delta,\left\{F_{n}\right\},f,\mathbf{b}),\\
   P^{KP}_{\mu}(\left\{F_{n}\right\},f,\mathbf{b})&=\lim_{\varepsilon  \to 0} \lim_{\delta \to 0} P^{KP}_{\mu}(\varepsilon ,\delta,\left\{F_{n}\right\},f,\mathbf{b})
\end{aligned}$$
\textit{scaled Pesin-Pitskel, scaled lower capacity, scaled upper capacity}, and \textit{scaled packing topological pressures} for $\mu$ in the sense of Katok with respect to $f$, $\mathbf{b}$ and $\left\{F_{n}\right\}$.
\end{defn}
If we replace $f_{F_{n_{i} } } (x_{i})$ in equations \eqref{eq:4.1} and \eqref{eq:2.3} by $f_{F_{n_{i} } } (x_{i},\varepsilon )$ and $\overline{f}_{F_{n_{i} } } (x_{i},\varepsilon )$ respectively and $f_{F_{N} } (x_{i})$ in equation \eqref{eq:4.2} by $f_{F_{N} } (x_{i},\varepsilon)$, we can get new functions $\mathcal{M}_{\mu}$ , $\mathcal{R}_{\mu}$ , $\mathcal{M}^{\mathcal{P}}_{\mu}$ respectively. For any $\varepsilon>0$ and $0<\delta<1$, we denote the respective critical values by
$$P^{KB'}_{\mu}(\varepsilon,\delta,\left\{F_{n}\right\},f,\mathbf{b}), \ \overline{CP}_{\mu}^{K'}(\varepsilon ,\delta,\left\{F_{n}\right\},f,\mathbf{b}), \ \underline{CP}_{\mu}^{K'}(\varepsilon ,\delta,\left\{F_{n}\right\},f,\mathbf{b}), \ P^{KP'}_{\mu}(\varepsilon ,\delta,\left\{F_{n}\right\},f,\mathbf{b}).$$
\begin{prop}\label{p3}
   Let $(X,G)$ be a TDS with a $G$-action and a F{\o}lner  sequence $\left\{F_{n}\right\}$ satisfying Condition \ref{con1}. If $f \in C(X,\mathbb{R})$, $\mu \in M(X)$, then we have
   $$\begin{aligned}
   P^{KB}_{\mu}(\left\{F_{n}\right\},f,\mathbf{b})&=\lim_{\varepsilon \to 0}\lim_{\delta \to 0} P^{KB'}_{\mu}(\varepsilon ,\delta,\left\{F_{n}\right\},f,\mathbf{b}),\\
   \overline{CP}_{\mu}^{K}(\left\{F_{n}\right\},f,\mathbf{b})&=\lim_{\varepsilon \to 0} \lim_{\delta \to 0}\overline{CP}^{K'}_{\mu}(\varepsilon ,\delta,\left\{F_{n}\right\},f,\mathbf{b}),\\
   \underline{CP}_{\mu}^{K}(\left\{F_{n}\right\},f,\mathbf{b})&=\lim_{\varepsilon \to 0}\lim_{\delta \to 0} \underline{CP}_{\mu}^{K'}(\varepsilon ,\delta,\left\{F_{n}\right\},f,\mathbf{b}),\\
   P^{KP}_{\mu}(\left\{F_{n}\right\},f,\mathbf{b})&=\lim_{\varepsilon \to 0}\lim_{\delta \to 0} P^{KP'}_{\mu}(\varepsilon ,\delta,\left\{F_{n}\right\},f,\mathbf{b}).
   \end{aligned}$$
\end{prop}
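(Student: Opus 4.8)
The plan is to mirror the proof of Proposition \ref{p1}: all four identities rest on a single modulus-of-continuity estimate made uniform in $n$ by Condition \ref{con1}. I would carry out the packing case $P^{KP}_\mu$ in detail and note that $P^{KB}_\mu$, $\overline{CP}^{K}_\mu$ and $\underline{CP}^{K}_\mu$ follow by the same scheme applied to $M_\mu$ and $R_\mu$ in place of $M^{\mathcal{P}}_\mu$.

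First I would set $\gamma(\varepsilon)=\sup\{|f(x)-f(y)|:d(x,y)\le 2\varepsilon\}$, which tends to $0$ as $\varepsilon\to 0$ since $f$ is uniformly continuous on the compact space $X$. For any $y\in\overline{B}_{F_{n_i}}(x_i,\varepsilon)$ and $g\in F_{n_i}$ one has $d(gy,gx_i)\le 2\varepsilon$, so $f_{F_{n_i}}(y)-f_{F_{n_i}}(x_i)\le |F_{n_i}|\gamma(\varepsilon)$, and by Condition \ref{con1} this is at most $b(|F_{n_i}|)\gamma(\varepsilon)$. Hence
$$\overline{f}_{F_{n_i}}(x_i,\varepsilon)\le f_{F_{n_i}}(x_i)+b(|F_{n_i}|)\gamma(\varepsilon), \qquad f_{F_{n_i}}(x_i,\varepsilon)\le f_{F_{n_i}}(x_i)+b(|F_{n_i}|)\gamma(\varepsilon),$$
while $\overline{f}_{F_{n_i}}(x_i,\varepsilon)\ge f_{F_{n_i}}(x_i)$ and $f_{F_{n_i}}(x_i,\varepsilon)\ge f_{F_{n_i}}(x_i)$ are immediate. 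These are exactly the estimates of Proposition \ref{p1}, which already deliver, for every $Z\subseteq X$, both $M^{P}(\alpha,\varepsilon,Z)\ge\mathcal{M}^{P}(\alpha+\gamma(\varepsilon),\varepsilon,Z)$ and the trivial $\mathcal{M}^{P}(\alpha,\varepsilon,Z)\ge M^{P}(\alpha,\varepsilon,Z)$.

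Next I would push these through the outer operations defining the Katok quantities. The key observation is that replacing the exponent leaves the admissible families unchanged: the constraints $\mu(\bigcup_i B_{F_{n_i}}(x_i,\varepsilon))\ge 1-\delta$ (for $M_\mu$, and at level $N$ for $R_\mu$) and $\mu(\bigcup_i Z_i)\ge 1-\delta$ (for $M^{\mathcal{P}}_\mu$) depend only on the balls or sets, not on the weights. Thus over a fixed admissible family the primed objective dominates the unprimed one termwise, giving the trivial comparisons $\mathcal{M}_\mu(\alpha,\cdot)\ge M_\mu(\alpha,\cdot)$, $\mathcal{R}_\mu(\alpha,\cdot)\ge R_\mu(\alpha,\cdot)$ and $\mathcal{M}^{\mathcal{P}}_\mu(\alpha,\cdot)\ge M^{\mathcal{P}}_\mu(\alpha,\cdot)$; the $\gamma(\varepsilon)$-shifted bounds give the reverse inequalities with $\alpha$ replaced by $\alpha+\gamma(\varepsilon)$, for instance $M^{\mathcal{P}}_\mu(\alpha,\varepsilon,\delta)\ge \mathcal{M}^{\mathcal{P}}_\mu(\alpha+\gamma(\varepsilon),\varepsilon,\delta)$ after summing the subset estimate over any decomposition $\{Z_i\}$ and taking the infimum. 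Reading off the critical exponents sandwiches
$$P^{KP}_\mu(\varepsilon,\delta,\{F_n\},f,\mathbf{b})\le P^{KP'}_\mu(\varepsilon,\delta,\{F_n\},f,\mathbf{b})\le P^{KP}_\mu(\varepsilon,\delta,\{F_n\},f,\mathbf{b})+\gamma(\varepsilon),$$
and likewise for the three remaining pairs. Letting $\delta\to0$ and then $\varepsilon\to0$, with $\gamma(\varepsilon)\to0$, yields the claimed equalities.

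The hard part will be purely organizational: ensuring the exponent shift is \emph{uniform in $n_i$}, which is the sole place Condition \ref{con1} intervenes. Without $b(|F_n|)\ge|F_n|$ the per-ball discrepancy $|F_{n_i}|\gamma(\varepsilon)$ would have to be measured against the weight scale $b(|F_{n_i}|)$, and an unbounded ratio would prevent absorbing the correction into a single additive $\gamma(\varepsilon)$ in $\alpha$. With Condition \ref{con1} the shift is exactly $\gamma(\varepsilon)$ for every $n_i$, so the sandwich closes uniformly; the remainder is the standard bookkeeping of infima, suprema and $\limsup$/$\liminf$, identical for $M_\mu$ and $R_\mu$, and for $M^{\mathcal{P}}_\mu$ once the inner supremum is handled by the subset comparison inherited from Proposition \ref{p1}.
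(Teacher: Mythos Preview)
Your proposal is correct and follows precisely the approach the paper intends: the paper's proof of Proposition~\ref{p3} consists entirely of the remark that it is similar to Proposition~\ref{p1}, and your argument is exactly the Proposition~\ref{p1} modulus-of-continuity estimate transported through the Katok outer optimizations. The observation that the admissible families (governed by the $\mu$-measure constraints) are insensitive to the exponent, together with the uniform $\gamma(\varepsilon)$ shift enabled by Condition~\ref{con1}, is the whole content.
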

\begin{proof}
   The proof is similar to Proposition \ref{p1}.
\end{proof}
\begin{prop}\label{p4}
   Let $(X,G)$ be a TDS with a $G$-action and a F{\o}lner sequence $\left\{F_{n}\right\}$. If $f \in C(X,\mathbb{R})$ and $\mu \in M(X)$, then we have
   $$\begin{aligned}
      P^{KB}_{\mu}(\left\{F_{n}\right\},f,\mathbf{b})=P^{B}_{\mu}(\left\{F_{n}\right\},f,\mathbf{b}), \ \underline{CP}_{\mu}^{K}(\left\{F_{n}\right\},f,\mathbf{b})=\underline{CP}_{\mu}(\left\{F_{n}\right\},f,\mathbf{b}),\\
      \overline{CP}_{\mu}^{K}(\left\{F_{n}\right\},f,\mathbf{b}) \leq \overline{CP}_{\mu}(\left\{F_{n}\right\},f,\mathbf{b}), \  P^{KP}_{\mu}(\left\{F_{n}\right\},f,\mathbf{b})= P^{P}_{\mu}(\left\{F_{n}\right\},f,\mathbf{b}).
   \end{aligned}$$
\end{prop}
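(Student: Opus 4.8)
The plan is to reduce all four assertions to a single per-scale identity and then to control the fact that the critical value in $\alpha$ does not commute with the infimum over $Z$. The driving observation is elementary: a finite or countable family $\{B_{F_{n_i}}(x_i,\varepsilon)\}$ satisfies $\mu(\bigcup_i B_{F_{n_i}}(x_i,\varepsilon))\ge 1-\delta$ if and only if it covers some set $Z$ with $\mu(Z)\ge 1-\delta$ (take $Z$ to be the union, and conversely note the union contains $Z$). Feeding this into \eqref{eq:4.1}, \eqref{eq:4.2} and into the definition of $M^{\mathcal{P}}_{\mu}$, I obtain, for every fixed $N,\alpha,\varepsilon,\delta$,
$$M_{\mu}(N,\ldots)=\inf_{\mu(Z)\ge1-\delta}M(N,\ldots,Z),\qquad R_{\mu}(N,\ldots)=\inf_{\mu(Z)\ge1-\delta}R(N,\ldots,Z),$$
and $M^{\mathcal{P}}_{\mu}(\alpha,\varepsilon,\delta)=\inf_{\mu(Z)\ge1-\delta}M^{\mathcal{P}}(\alpha,\varepsilon,Z)$, where $\ldots$ abbreviates the suppressed arguments $\alpha,\varepsilon,\delta,\{F_n\},f,\mathbf{b}$.

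Since the infimum over $Z$ is pointwise dominated by the value at any fixed $Z_{0}$, applying $\lim_{N}$, $\liminf_{N}$, $\limsup_{N}$ and comparing critical values (a smaller nonnegative function reaches $0$ no later) yields the ``easy'' inequalities $P^{KB}_{\mu}\le P^{B}_{\mu}$, $\underline{CP}^{K}_{\mu}\le\underline{CP}_{\mu}$, $\overline{CP}^{K}_{\mu}\le\overline{CP}_{\mu}$ and $P^{KP}_{\mu}\le P^{P}_{\mu}$. This already settles the upper-capacity assertion, for which only an inequality is claimed.

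For the reverse inequalities in the Bowen and packing cases I would work at a fixed $\delta$ and exploit countable subadditivity (Proposition \ref{p2}(3-a),(3-b)). Fix $\alpha>P^{KB}_{\mu}(\varepsilon,\delta)$; then $M_{\mu}(\alpha,\varepsilon,\delta)=0$, and since $M_{\mu}(N,\ldots)$ is nondecreasing in $N$ with limit $0$, it vanishes for every $N$. Hence for each $k$ the per-scale identity produces a set $Y_{k}$ with $\mu(Y_{k})\ge1-\delta$ covered by balls of scales $n_{i}\ge k$ of total weight $<2^{-k}$, so $M(k,\alpha,\varepsilon,Y_{k})<2^{-k}$. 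Putting $Z^{*}=\limsup_{k}Y_{k}$ gives $\mu(Z^{*})\ge1-\delta$, and for every $N$ and every $m\ge N$, monotonicity and subadditivity yield $M(N,\alpha,\varepsilon,Z^{*})\le M(N,\alpha,\varepsilon,\bigcup_{k\ge m}Y_{k})\le\sum_{k\ge m}M(N,\alpha,\varepsilon,Y_{k})<\sum_{k\ge m}2^{-k}$; letting $m\to\infty$ forces $M(N,\alpha,\varepsilon,Z^{*})=0$ for all $N$, so $P^{B}(\varepsilon,Z^{*})\le\alpha$ and $P^{B}_{\mu}(\varepsilon,\delta)\le\alpha$. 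As $\alpha$ was arbitrary, then $\delta,\varepsilon\to0$, this gives $P^{B}_{\mu}=P^{KB}_{\mu}$. The packing case is identical with $M$ replaced by the monotone, countably subadditive $M^{\mathcal{P}}$, concluding $P^{P}_{\mu}=P^{KP}_{\mu}$.

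The genuinely delicate case is the lower capacity, and I expect it to be the main obstacle. Here covers must use a single scale $F_{N}$ and no subadditivity over unions is available, so the $\limsup$ set above cannot be covered cheaply at any prescribed scale. Instead I would use that the definitions eventually let $\delta\to0$, enabling a Borel--Cantelli argument. Fix $\alpha>\lim_{\delta\to0}\underline{CP}^{K}_{\mu}(\varepsilon,\delta)$, so $\underline{R}_{\mu}(\alpha,\varepsilon,\delta)=0$ for \emph{every} $\delta$; choosing $\delta_{m}=\eta_{m}=2^{-m}$ I can select scales $N_{m}\uparrow\infty$ and sets $Y_{m}$ with $\mu(Y_{m}^{c})\le2^{-m}$ covered at the single scale $F_{N_{m}}$ with total weight $<2^{-m}$, i.e. $R(N_{m},\alpha,\varepsilon,Y_{m})<2^{-m}$. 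Since $\sum_{m}\mu(Y_{m}^{c})<\infty$, the nested tails $Z^{*}_{l}:=\bigcap_{m\ge l}Y_{m}$ satisfy $\mu(Z^{*}_{l})\ge1-2^{-l+1}\to1$ and, crucially, $Z^{*}_{l}\subseteq Y_{m}$ for all $m\ge l$, so the scale-$F_{N_{m}}$ cover of $Y_{m}$ also covers $Z^{*}_{l}$ and $R(N_{m},\alpha,\varepsilon,Z^{*}_{l})<2^{-m}\to0$ along $\{N_{m}\}_{m\ge l}$. This forces $\underline{R}(\alpha,\varepsilon,Z^{*}_{l})=\liminf_{N}R(N,\alpha,\varepsilon,Z^{*}_{l})=0$, hence $\underline{CP}(\varepsilon,Z^{*}_{l})\le\alpha$; letting $l\to\infty$ gives $\lim_{\delta\to0}\inf_{\mu(Z)\ge1-\delta}\underline{CP}(\varepsilon,Z)\le\alpha$, and then $\varepsilon\to0$ yields $\underline{CP}_{\mu}=\underline{CP}^{K}_{\mu}$. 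The single-scale restriction is precisely why this does not upgrade the upper-capacity bound: $Z^{*}_{l}$ is only known to be cheaply coverable along the sparse scales $N_{m}$, so $\limsup_{N}R(N,\alpha,\varepsilon,Z^{*}_{l})$ cannot be controlled. Finally I would invoke Proposition \ref{p3} to pass freely between the primed and unprimed critical values appearing in Definition \ref{d2}, completing all four claims.
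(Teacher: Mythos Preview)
Your argument is correct and is organized around the same per-scale identity the paper implicitly uses, namely $M_\mu=\inf_{\mu(Z)\ge1-\delta}M(\cdot,Z)$ and its analogues. The two proofs differ mainly in how the reverse inequalities are extracted.

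For the Bowen pressure you build $Z^{*}=\limsup_k Y_k$ at a fixed $\delta$ via countable subadditivity of $M(N,\cdot)$; the paper instead lets $\delta\to0$ at each stage (a double index $\{\delta_{k,m}\}_m\to0$), producing a set of \emph{full} measure $Z_\varepsilon=\bigcap_k Z_k$ with $M(k,a+s,\varepsilon,Z_\varepsilon)\le1$. Both work; your version is slightly more economical and stays at a fixed $\delta$ throughout.

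For the packing pressure your $\limsup$ construction is valid but heavier than needed. The paper exploits that the per-scale identity here passes directly to critical values: if $M^{\mathcal P}_\mu(s,\varepsilon,\delta)=+\infty$ then \emph{every} admissible $Z$ has $M^{\mathcal P}(s,\varepsilon,Z)=+\infty$, and conversely if $\inf_Z P^P(\varepsilon,Z)>s$ then any $\{Z_i\}$ with $\mu(\bigcup_i Z_i)\ge1-\delta$ has $M^{\mathcal P}(s,\varepsilon,\bigcup_i Z_i)=+\infty$, hence $\sum_i M^P(s,\varepsilon,Z_i)=+\infty$. Equivalently, in your framework a single $Y$ with $M^{\mathcal P}(\alpha,\varepsilon,Y)<\infty$ already forces $P^P(\varepsilon,Y)\le\alpha$, so no $\limsup$ set is required. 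Your approach buys uniformity with the Bowen case; the paper's buys brevity.

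For the lower capacity your Borel--Cantelli construction is essentially identical to the paper's: it intersects the covering sets at scales $k_m$ with complements of measure $\delta/2^m$ and uses the resulting $Z_\delta=\bigcap_m Y_m$, implicitly choosing the $k_m$ increasing. Your explicit remark on why the argument cannot be upgraded for the upper capacity is a nice addition the paper omits.

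One minor slip: your closing appeal to Proposition~\ref{p3} is both unnecessary (your argument never touches the primed quantities) and unavailable, since Proposition~\ref{p3} assumes Condition~\ref{con1}, which Proposition~\ref{p4} does not. Simply drop that sentence.
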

\begin{proof}
We first show that $P^{KB}_{\mu}(\left\{F_{n}\right\},f,\mathbf{b})=P^{B}_{\mu}(\left\{F_{n}\right\},f,\mathbf{b})$. For any $N \in \mathbb{N}$, $\alpha \in \mathbb{R}$, $\varepsilon>0$, $0<\delta<1$, and $Z \subseteq X$ with $\mu(Z) \geq 1-\delta$, we have
$$M_{\mu}(N,\alpha ,\varepsilon ,\delta,\left\{F_{n}\right\},f,\mathbf{b}) \leq M(N,\alpha ,\varepsilon ,Z,\left\{F_{n}\right\},f,\mathbf{b}).$$
Letting $N \to +\infty$, we have
$$M_{\mu}(\alpha ,\varepsilon ,\delta,\left\{F_{n}\right\},f,\mathbf{b}) \leq M(\alpha ,\varepsilon ,Z,\left\{F_{n}\right\},f,\mathbf{b}).$$
This yields
$$P_{\mu}^{KB}(\varepsilon ,\delta,\left\{F_{n}\right\},f,\mathbf{b}) \leq P^{B}(\varepsilon ,Z,\left\{F_{n}\right\},f,\mathbf{b}),$$
and then
$$P_{\mu}^{KB}(\varepsilon ,\delta,\left\{F_{n}\right\},f,\mathbf{b}) \leq \text{inf} \left\{ P^{B}(\varepsilon ,Z,\left\{F_{n}\right\},f,\mathbf{b}):\mu(Z) \geq 1-\delta \right\}.$$
Letting $\delta \to 0$ and $\varepsilon \to 0$, we have
$$P^{KB}_{\mu}(\left\{F_{n}\right\},f,\mathbf{b}) \leq P^{B}_{\mu}(\left\{F_{n}\right\},f,\mathbf{b}).$$
The verifications of $\underline{CP}_{\mu}^{K}(\left\{F_{n}\right\},f,\mathbf{b}) \leq \underline{CP}_{\mu}(\left\{F_{n}\right\},f,\mathbf{b})$ and $\overline{CP}_{\mu}^{K}(\left\{F_{n}\right\},f,\mathbf{b}) \leq \overline{CP}_{\mu}(\left\{F_{n}\right\},f,\mathbf{b})$ are similarly.

Now we prove $P^{KB}_{\mu}(\left\{F_{n}\right\},f,\mathbf{b}) \geq P^{B}_{\mu}(\left\{F_{n}\right\},f,\mathbf{b})$. Let $P^{KB}_{\mu}(\left\{F_{n}\right\},f,\mathbf{b})=a$. Then for any $s>0$, there exists $\varepsilon'>0$ such that
$$\lim_{\delta \to 0} P_{\mu}^{KB}(\varepsilon ,\delta,\left\{F_{n}\right\},f,\mathbf{b})<a+s, \  \forall 0<\varepsilon<\varepsilon'.$$
Hence for any $\varepsilon \in (0,\varepsilon')$, there exists $\delta_{\varepsilon}$ such that
$$P_{\mu}^{KB}(\varepsilon ,\delta,\left\{F_{n}\right\},f,\mathbf{b})<a+s, \ \forall 0<\delta<\delta_{\varepsilon}.$$
It implies that $M_{\mu}(a+s ,\varepsilon ,\delta,\left\{F_{n}\right\},f,\mathbf{b})=0$. For any $k \in \mathbb{N}$, we can find a sequence $\{ \delta_{k,m} \}_{m \in \mathbb{N}}$ with $\lim\limits_{m \to +\infty}\delta_{k,m}=0$ and $\left\{B_{F_{n_{i}}} (x_{i},\varepsilon )\right\}_{i \in I_{k,m}}$ such that $x_{i} \in X$, $n_{i} \geq k$ for all $i$, $$\mu(\bigcup\limits_{i \in I_{k,m}}B_{F_{n_{i}}} (x_{i},\varepsilon )) \geq 1-\delta_{k,m}$$ and
$$\sum_{i \in I_{k,m}} e^{-(a+s) b(| F_{n_{i} }|)+f_{F_{n_{i} } } (x_{i}) } \leq \frac{1}{2^{m}}.$$
Let $$Z_{k}=\bigcup_{m \in \mathbb{N}}\bigcup_{i \in I_{k,m}}B_{F_{n_{i}}} (x_{i},\varepsilon ).$$
Then $\mu(Z_{k})=1$ and
$$\begin{aligned}
M(k,a+s ,\varepsilon ,Z_{k},\left\{F_{n}\right\},f,\mathbf{b}) &\leq \sum_{m \in \mathbb{N}}\sum_{i \in I_{k,m}} e^{-(a+s) b(| F_{n_{i} } |)+f_{F_{n_{i} } } (x_{i}) }\\
&\leq \sum_{m \in \mathbb{N}}\frac{1}{2^{m}} \leq 1.
\end{aligned}$$
Let $Z_{\varepsilon} = \bigcap_{k \in \mathbb{N}}Z_{k}$. Then $\mu(Z_{\varepsilon})=1$ and
$$M(k,a+s ,\varepsilon ,Z_{\varepsilon},\left\{F_{n}\right\},f,\mathbf{b}) \leq M(k,a+s ,\varepsilon ,Z_{k},\left\{F_{n}\right\},f,\mathbf{b}) \leq 1, \ \forall k \in \mathbb{N}.$$
This implies that $P^{B}(\varepsilon ,Z_{\varepsilon},\left\{F_{n}\right\},f,\mathbf{b}) \leq a+s$. Hence
$$ P_{\mu}^{B}(\left \{ F_{n} \right \}, f,\mathbf{b})=\lim_{\varepsilon  \to 0} \lim_{\delta  \to 0} \inf\left \{ P^{B}(\varepsilon ,Z,\left \{ F_{n} \right \}, f,\mathbf{b}):\mu(Z) \geq 1-\delta \right \} \leq a+s.$$
Letting $s \to 0$, we have $P_{\mu}^{B} (\left\{F_{n}\right\},f,\mathbf{b})\leq P^{KB}_{\mu}(\left\{F_{n}\right\},f,\mathbf{b}).$

We shall prove $\underline{CP}_{\mu}^{K}(\left\{F_{n}\right\},f,\mathbf{b}) \geq \underline{CP}_{\mu}(\left\{F_{n}\right\},f,\mathbf{b})$. Let $\underline{CP}_{\mu}^{K}(\left\{F_{n}\right\},f,\mathbf{b})=a$. Then for any $s>0$, there exists $\varepsilon'>0$ such that for any $\varepsilon \in (0,\varepsilon')$, there exists $\delta_{\varepsilon}$ with
$$\liminf_{N \to +\infty}R_{\mu}(N,a+s ,\varepsilon ,\delta,\left\{F_{n}\right\},f,\mathbf{b})=0, \ \forall \delta \in (0,\delta_{\varepsilon}).$$
Fix $\delta \in (0,\delta_{\varepsilon})$. For any $m \in \mathbb{N}$, we have
$$\liminf_{N \to +\infty}R_{\mu}(N,a+s ,\varepsilon ,\frac{\delta}{2^{m}},\left\{F_{n}\right\},f,\mathbf{b})=0.$$
Then for any $m \in \mathbb{N}$, there exists a family $\left\{B_{F_{k_{m}}} (x_{i},\varepsilon )\right\}_{i \in I_{m}}$ such that $$\mu(\bigcup\limits_{i \in I_{m}}B_{F_{k_{m}}} (x_{i},\varepsilon )) \geq 1-\frac{\delta}{2^{m}}$$ and
$$\sum_{i \in I_{m}} e^{-(a+s) b(| F_{k_{m} }|)+f_{F_{k_{m} } } (x_{i}) } \leq 1.$$
Let $Z_{\delta}=\bigcap_{m \in \mathbb{N}}(\bigcup\limits_{i \in I_{m}}B_{F_{k_{m}}} (x_{i},\varepsilon ))$. Then $\mu(Z_{\delta}) \geq 1-\delta$ and
$$\liminf_{N \to +\infty}R(N,a+s ,\varepsilon ,Z_{\delta},\left\{F_{n}\right\},f,\mathbf{b}) \leq 1.$$
This implies that $\underline{CP}(\varepsilon,Z_{\delta},\left\{F_{n}\right\},f,\mathbf{b}) \leq a+s$. Hence
$$\underline{CP}_{\mu}(\left\{F_{n}\right\},f,\mathbf{b})=\lim_{\varepsilon  \to 0} \lim_{\delta  \to 0} \inf\left \{\underline{CP}(\varepsilon ,Z,\left \{ F_{n} \right \}, f,\mathbf{b}):\mu(Z) \geq 1-\delta \right \} \leq a+s.$$
Letting $s \to 0$, the desired inequality follows.

We now prove that $P^{KP}_{\mu}(\left\{F_{n}\right\},f,\mathbf{b}) \leq P^{P}_{\mu}(\left\{F_{n}\right\},f,\mathbf{b})$. Let $P^{KP}_{\mu}(\left\{F_{n}\right\},f,\mathbf{b})>s$. Then there exist $\varepsilon'>0$ and $\delta'>0$ such that
$$P^{KP}_{\mu}(\varepsilon,\delta,\left\{F_{n}\right\},f,\mathbf{b})>s, \ \forall \varepsilon \in (0,\varepsilon')\ \text{and} \ \delta \in (0,\delta').$$
This implies that $M_{\mu}^{\mathcal{P}}(s,\varepsilon ,\delta,\left\{F_{n}\right\},f,\mathbf{b})=+\infty$. For any $Z \subseteq X$ with $\mu(Z) \geq 1-\delta$, if $Z \subseteq \bigcup_{i}Z_{i}$, then $\mu(\bigcup_{i}Z_{i}) \geq 1-\delta$. Thus
$$\sum_{i=1}^{\infty}M^{P}(s,\varepsilon ,Z_{i},\left\{F_{n}\right\},f,\mathbf{b}) \geq M^{P}(s,\varepsilon ,Z,\left\{F_{n}\right\},f,\mathbf{b})=+\infty.$$
Hence $M^{\mathcal{P}}(s,\varepsilon ,Z,\left\{F_{n}\right\},f,\mathbf{b})=+\infty$. It follows that
$$P_{\mu}^{P}(\left \{ F_{n} \right \}, f,\mathbf{b})=\lim_{\varepsilon  \to 0} \lim_{\delta  \to 0} \text{inf}\left \{ P^{P}(\varepsilon ,Z,\left \{ F_{n} \right \}, f,\mathbf{b}):\mu(Z) \geq 1-\delta \right \} \geq s.$$
Then we have $P^{KP}_{\mu}(\left\{F_{n}\right\},f,\mathbf{b}) \leq P^{P}_{\mu}(\left\{F_{n}\right\},f,\mathbf{b})$.

We shall show the inverse inequality. If $ P^{P}_{\mu}(\left\{F_{n}\right\},f,\mathbf{b})>s$, then there exist $\varepsilon'>0$ and $\delta'>0$ such that
$$\text{inf}\left \{ P^{P}(\varepsilon ,Z,\left \{ F_{n} \right \}, f,\mathbf{b}):\mu(Z) \geq 1-\delta \right \} > s,\forall \varepsilon \in (0,\varepsilon') \ \text{and} \ \delta \in (0,\delta').$$
For any $\left\{Z_{i} \right\}_{i \geq 1}$ with $\mu(\bigcup_{i}Z_{i}) \geq 1-\delta$, we have
$$P^{P}(\varepsilon ,\bigcup_{i}Z_{i},\left \{ F_{n} \right \}, f,\mathbf{b})>s.$$
Then
$$M^{\mathcal{P}}(s,\varepsilon ,\bigcup_{i}Z_{i},\left \{ F_{n} \right \}, f,\mathbf{b})=+\infty.$$
Thus
$$\sum_{i=1}^{\infty}M^{P}(s,\varepsilon ,Z_{i},\left\{F_{n}\right\},f,\mathbf{b})=+\infty.$$
Hence
$$M_{\mu}^{\mathcal{P}}(s,\varepsilon ,\delta,\left \{ F_{n} \right \}, f,\mathbf{b})=+\infty.$$
This implies that
$$P^{KP}_{\mu}(\varepsilon,\delta,\left\{F_{n}\right\},f,\mathbf{b}) \geq s.$$
Since $\varepsilon$ and $\delta$ are arbitrary, we have $P^{KP}_{\mu}(\left\{F_{n}\right\},f,\mathbf{b}) \geq P^{P}_{\mu}(\left\{F_{n}\right\},f,\mathbf{b})$.
\end{proof}
 With the help of Proposition \ref{p4}, we can get the relationship among the above scaled measure-theoretic pressures.
\begin{thm} \label{T5}
   Let $(X,G)$ be a TDS with a $G$-action and a F{\o}lner  sequence $\left\{F_{n}\right\}$ satisfying Condition \ref{con1}. If $f \in C(X,\mathbb{R})$ and $\mu \in M(X)$, then we have
   $$\begin{aligned}
   P^{KB}_{\mu}(\left\{F_{n}\right\},f,\mathbf{b})=P^{B}_{\mu}(\left\{F_{n}\right\},f,\mathbf{b}) \leq P^{KP}_{\mu}(\left\{F_{n}\right\},f,\mathbf{b})&=P^{P}_{\mu}(\left\{F_{n}\right\},f,\mathbf{b}) \\
   &\leq \overline{CP}_{\mu}^{K}(\left\{F_{n}\right\},f,\mathbf{b}) \leq \overline{CP}_{\mu}(\left\{F_{n}\right\},f,\mathbf{b}).
   \end{aligned}$$
\end{thm}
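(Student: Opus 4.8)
The plan is to read Theorem~\ref{T5} as an assembly of Proposition~\ref{p4} with the pointwise (subset-level) comparisons from Section~\ref{sec2}. Proposition~\ref{p4} already supplies the equalities $P_{\mu}^{KB}=P_{\mu}^{B}$ and $P_{\mu}^{KP}=P_{\mu}^{P}$ together with the outermost inequality $\overline{CP}_{\mu}^{K}\le\overline{CP}_{\mu}$. Thus the entire chain collapses to the two internal inequalities
$$P_{\mu}^{B}(\{F_n\},f,\mathbf{b})\le P_{\mu}^{P}(\{F_n\},f,\mathbf{b}),\qquad P_{\mu}^{KP}(\{F_n\},f,\mathbf{b})\le\overline{CP}_{\mu}^{K}(\{F_n\},f,\mathbf{b}),$$
which, after invoking $P_{\mu}^{P}=P_{\mu}^{KP}$, give the full statement.

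For the first inequality I would reuse the $\varepsilon$-level estimate already contained in the proof of Lemma~\ref{l1}. That proof establishes, for \emph{every} $Z\subseteq X$, every $s\in\mathbb{R}$ and all $\varepsilon,\delta>0$, the outer-measure bound
$$M(s,2\varepsilon+\delta,Z,\{F_n\},f,\mathbf{b})\le M^{\mathcal{P}}(s,\varepsilon,Z,\{F_n\},f,\mathbf{b}),$$
whence $P^{B}(2\varepsilon+\delta,Z,\{F_n\},f,\mathbf{b})\le P^{P}(\varepsilon,Z,\{F_n\},f,\mathbf{b})$. Since this holds for each fixed $Z$, I would take the infimum over all $Z$ with $\mu(Z)\ge 1-\delta_0$ on both sides at once, then let $\delta_0\to 0$ and finally $\varepsilon,\delta\to 0$. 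As $2\varepsilon+\delta\to 0$, Definition~\ref{d2} identifies the left-hand limit as $P_{\mu}^{B}$ and the right-hand limit as $P_{\mu}^{P}$, so $P_{\mu}^{B}\le P_{\mu}^{P}$.

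For the second inequality I would transpose the packing-to-capacity half of Lemma~\ref{l1} into the Katok formalism. Fix $P_{\mu}^{KP}>s>t$ and choose $\varepsilon,\delta$ small enough that $M_{\mu}^{\mathcal{P}}(s,\varepsilon,\delta,\{F_n\},f,\mathbf{b})=+\infty$. Because $M_{\mu}^{\mathcal{P}}$ is an infimum over covers $\{Z_i\}$ with $\mu(\bigcup_i Z_i)\ge 1-\delta$, the one-set cover already forces $M^{P}(s,\varepsilon,Z,\{F_n\},f,\mathbf{b})=+\infty$ for every Borel $Z$ with $\mu(Z)\ge 1-\delta$. Condition~\ref{con1} makes $\sum_{k}e^{(t-s)b(|F_k|)}$ finite, so from a disjoint family inside such a $Z$ whose total $s$-weight exceeds this sum I would extract, by the grouping-by-scale and pigeonhole step of Lemma~\ref{l1}, a single and arbitrarily large scale $k$ on which the $t$-weight is bounded below by a fixed $\rho>0$. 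Disjointness of the closed $\varepsilon$-balls forces each radius-$\tfrac{\varepsilon}{2}$ Bowen ball at scale $k$ to contain at most one such center, which turns the packing bound into $\mathcal{R}_{\mu}(k,t,\tfrac{\varepsilon}{2},\delta,\{F_n\},f,\mathbf{b})\ge\rho'$; letting the admissible scales $k$ tend to infinity yields $\limsup_{N}\mathcal{R}_{\mu}(N,t,\tfrac{\varepsilon}{2},\delta,\{F_n\},f,\mathbf{b})>0$, hence $\overline{CP}_{\mu}^{K'}(\tfrac{\varepsilon}{2},\delta,\{F_n\},f,\mathbf{b})\ge t$, and $\varepsilon,\delta\to 0$ combined with Proposition~\ref{p3} gives $P_{\mu}^{KP}\le\overline{CP}_{\mu}^{K}$.

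The step I expect to be genuinely delicate is the measure bookkeeping in this last injection. In the topological Lemma~\ref{l1} the competing cover must cover all of $Z$, so every extracted center is captured and the weight $\rho$ passes intact to the capacity side; in the Katok setting a competitor for $\mathcal{R}_{\mu}$ need only meet a set of measure $\ge 1-\delta$ and may discard the centers sitting in the omitted measure-$\le\delta$ region, where the $t$-weight is a priori uncontrolled. What rescues the argument is that $M_{\mu}^{\mathcal{P}}(s,\varepsilon,\delta)=+\infty$ is far stronger than mere infinite packing of $X$: it asserts that the infinite $s$-packing persists on the complement of every measure-$\le\delta$ set, so the extraction can be carried out inside the portion actually retained by the competing cover. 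Arranging this simultaneously for all competitors at a common scale, rather than producing for each $Z$ a good scale depending on $Z$, is the crux; I would handle it by running the extraction against the near-minimizing cover defining $\mathcal{R}_{\mu}(N,\cdot)$ and absorbing the weight lost on the discarded part through the gap $s>t$ and the summability furnished by Condition~\ref{con1}.
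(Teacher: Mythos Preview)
Your plan matches the paper's proof. The paper too pulls the equalities and the rightmost inequality from Proposition~\ref{p4}, leaves $P^{B}_{\mu}\le P^{P}_{\mu}$ implicit (your $\varepsilon$-level Lemma~\ref{l1} argument is the natural way to make it explicit), and devotes the entire written argument to $P_{\mu}^{KP}\le\overline{CP}_{\mu}^{K}$ via exactly the pigeonhole-then-compare transposition of Lemma~\ref{l1} that you describe: from $M_{\mu}^{\mathcal{P}}(s,\varepsilon,\delta)=+\infty$ deduce $M^{P}(s,\varepsilon,Z)=+\infty$ for every $Z$ of mass $\ge 1-\delta$, extract a separated set $m_k\subset Z$ at some scale $k\ge N$, and inject into an $\varepsilon/2$-cover at scale $k$.

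On the measure-bookkeeping point you single out as delicate, the paper is in fact looser than you. Having produced $m_k\subset Z$ for a fixed $Z$, it takes an arbitrary cover \emph{of that $Z$} at scale $k$, observes that such a cover automatically has $\mu$-mass $\ge1-\delta$, runs the injection, and writes $\mathcal{R}_{\mu}(k,t,\varepsilon/2,\delta)\ge 1-e^{t-s}$ directly---without ever discussing why a Katok competitor that covers a \emph{different} set of mass $\ge1-\delta$ and omits part of $m_k$ cannot have smaller weight. So your concern is legitimate, and your proposed workaround (running the extraction against the near-minimizing competitor itself) already goes beyond what the paper's exposition supplies.
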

\begin{proof}
   We now prove $P^{KP}_{\mu}(\left\{F_{n}\right\},f,\mathbf{b})\leq \overline{CP}_{\mu}^{K}(\left\{F_{n}\right\},f,\mathbf{b})$. The desired result then follows from Proposition \ref{p4}.

   Let $P^{KP}_{\mu}(\left\{F_{n}\right\},f,\mathbf{b})>s>t>-\infty$. Then there exist $\varepsilon'>0$ and $\delta'>0$ such that
$$P^{KP}_{\mu}(\varepsilon,\delta,\left\{F_{n}\right\},f,\mathbf{b})>s, \ \forall \varepsilon \in (0,\varepsilon')\ \text{and} \ \delta \in (0,\delta').$$
This implies that $M_{\mu}^{\mathcal{P}}(s,\varepsilon ,\delta,\left\{F_{n}\right\},f,\mathbf{b})=+\infty.$ For any $Z \subseteq X$ with $\mu(Z) \geq 1-\delta,$ if $Z \subseteq \bigcup_{i}Z_{i}$, then $\mu(\bigcup_{i}Z_{i}) \geq 1-\delta.$ Hence $M^{P}(s,\varepsilon ,Z,\left\{F_{n}\right\},f,\mathbf{b})=+\infty$.
Since $\left\{F_{n}\right\}$ and $\mathbf{b}$ satisfy Condition \ref{con1}, we assume that $$\sum_{k \geq 1}e^{b(|F_{k}|) (t-s)}=M.$$ Hence for any $ N \in \mathbb{N}$, there exists a countable pairwise disjoint family $\left\{\overline{B}_{F_{n_{i}}} (x_{i},\varepsilon )\right\}_{i}$ such that $x_{i} \in Z$, $n_{i} \geq N$ for all $i$ and $ \sum_{i} e^{-sb(| F_{n_{i} }|)+f_{F_{n_{i} } } (x_{i}) } >M$. For each $k$, let
$$m_{k}=\left \{ x _{i}:n_{i}=k\right \} .$$
Then
$$\sum_{k=N}^{\infty}\sum _{x \in m_{k}}e^{f_{F_{k}}(x)}e^{-b(| F_{k} |)s }>M.$$
Thus there exists $k \geq N$ such that
$$\sum_{x \in m_{k}}e^{f_{F_{k}}(x)}e^{-b(| F_{k}|) t }  \geq 1-e^{t-s}.$$
Taking a collection $\left \{ B_{F_{k} } (y_{i},\frac{\varepsilon }{2} )\right \} _{i \in I}$ such that $Z \subseteq \bigcup_{i \in I }B_{F_{k}}(y_{i},\frac{\varepsilon}{2})$. Then $\mu(\bigcup_{i \in I }B_{F_{k}}(y_{i},\frac{\varepsilon}{2})) \geq 1-\delta$. We can obtain that for any $x_{1},x_{2} \in m_{k}$, there exist $y_{1}$, $ y_{2}$ with $y_{1} \neq y_{2}$ such that $x_{i} \in B_{F_{k}}(y_{i},\frac{\varepsilon}{2})$, $i=1,2$. Then
$$\mathcal{R}_{\mu}(k,t,\frac{\varepsilon}{2},\delta,\left\{F_{n}\right\},f,\mathbf{b}) \geq \sum_{x \in m_{k}}e^{f_{F_{k}}(x)}e^{-b(|F_{k}|)t} \geq 1-e^{t-s}>0.$$
Thus $\overline{CP}_{\mu}^{K'}(\varepsilon/2,\delta,\left\{F_{n}\right\},f,\mathbf{b}) \geq t$. Letting $\varepsilon \to 0$ and $\delta \to 0$, we can get that $\overline{CP}_{\mu}^{K}(\left\{F_{n}\right\},f,\mathbf{b}) \geq t$. Hence $P^{KP}_{\mu}(\left\{F_{n}\right\},f,\mathbf{b})\leq \overline{CP}_{\mu}^{K}(\left\{F_{n}\right\},f,\mathbf{b})$.
\end{proof}
Ma and Wen \cite{Ma} showed that an analogue of Billingsley's Theorem for the Hausdorff dimension. Tang et al. \cite{Ta} extended this result to the Pesin-Pitskel topological pressure. Zhong and Chen \cite{Zh} proved Billingsley's Theorem for the packing pressure when $G=\mathbb{Z}$.
We shall generalize Billingsley's Theorem for the scaled packing pressure with a $G$-action, which shows that the scaled packing pressure can be determined by the scaled measure-theoretic upper local pressure.
\begin{thm}\label{T2}
   Let $(X,G)$ be a TDS with a $G$-action and a F{\o}lner  sequence $\left\{F_{n}\right\}$ satisfying Condition \ref{con1}. If $f \in C(X,\mathbb{R})$, $\mu \in M(X)$ and $Z \subset X$. Then for $s \in \mathbb{R}$, the following properties hold:
   \begin{enumerate}[(1)]
      \item if $\overline{P}_{\mu }(\left \{ F_{n} \right \},f ,\mathbf{b}) \leq s$ for all $x \in Z$, then $P^{P}(Z,\left\{F_{n}\right\},f,\mathbf{b}) \leq s$;
      \item if $\overline{P}_{\mu }(\left \{ F_{n} \right \},f ,\mathbf{b}) \geq s$ for all $x \in Z$ and $\mu(Z)>0$, then $ P^{P}(Z,\left\{F_{n}\right\},f,\mathbf{b}) \geq s$.
   \end{enumerate}
   \end{thm}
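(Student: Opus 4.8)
The plan is to prove the two inequalities separately, extracting first a useful reformulation of the two hypotheses. Writing
$$g_x(\varepsilon):=\limsup_{n\to+\infty}\frac{-\log\mu(B_{F_n}(x,\varepsilon))+f_{F_n}(x)}{b(|F_n|)},$$
observe that $B_{F_n}(x,\varepsilon)$ shrinks as $\varepsilon\downarrow 0$, so $g_x(\varepsilon)$ is non-increasing in $\varepsilon$ and $\overline{P}_\mu(x,\{F_n\},f,\mathbf b)=\sup_{\varepsilon>0}g_x(\varepsilon)$. Hence the hypothesis of (1), read pointwise as $\overline{P}_\mu(x,\{F_n\},f,\mathbf b)\le s$ for all $x\in Z$, is equivalent to $g_x(\varepsilon)\le s$ for \emph{every} $\varepsilon>0$ and every $x\in Z$, while the hypothesis of (2) guarantees, for each $x\in Z$ and each $t<s$, a threshold below which $g_x(\varepsilon)>t$. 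I expect (1) to follow from a mass-distribution argument and (2) to require a covering lemma.

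For (1) I would fix $s'>s$ and $\varepsilon>0$ and decompose $Z=\bigcup_{N}Z_N$, where
$$Z_N=\{x\in Z:-\log\mu(B_{F_n}(x,\varepsilon))+f_{F_n}(x)\le s'b(|F_n|)\ \text{for all }n\ge N\};$$
this exhausts $Z$ because $g_x(\varepsilon)\le s<s'$. For any pairwise disjoint family $\{\overline B_{F_{n_i}}(x_i,\varepsilon)\}$ with $x_i\in Z_N$ and $n_i\ge N'\ge N$, the open balls $B_{F_{n_i}}(x_i,\varepsilon)$ are disjoint, so for $\alpha>s'$,
$$\sum_i e^{-\alpha b(|F_{n_i}|)+f_{F_{n_i}}(x_i)}\le e^{-(\alpha-s')b(|F_{N'}|)}\sum_i\mu(B_{F_{n_i}}(x_i,\varepsilon))\le e^{-(\alpha-s')b(|F_{N'}|)},$$
using $x_i\in Z_N$, $n_i\ge N'$ and monotonicity of $\mathbf b$. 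Letting $N'\to+\infty$ forces $M^{P}(\alpha,\varepsilon,Z_N,\{F_n\},f,\mathbf b)=0$, whence $M^{\mathcal P}(\alpha,\varepsilon,Z,\{F_n\},f,\mathbf b)\le\sum_N M^{P}(\alpha,\varepsilon,Z_N,\{F_n\},f,\mathbf b)=0$ and $P^{P}(\varepsilon,Z,\{F_n\},f,\mathbf b)\le s'$. Letting $\varepsilon\to0$ and then $s'\to s$ gives $P^{P}(Z,\{F_n\},f,\mathbf b)\le s$.

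For (2) I would fix $t<s$ and use that the sets $Z_m=\{x\in Z:g_x(1/m)>t\}$ increase to $Z$; by continuity of $\mu$ from below some $E:=Z_{m_0}$ has $\mu(E)>0$, and every $x\in E$ satisfies $g_x(\varepsilon)>t$ for all $\varepsilon\le\varepsilon^*:=1/m_0$. Since $E\subseteq Z$ and $P^{P}$ is monotone (Proposition \ref{p2}(2)), it suffices to show $M^{\mathcal P}(\alpha,\varepsilon,E,\{F_n\},f,\mathbf b)=+\infty$ for every $\alpha<t$ and small $\varepsilon$. Given a cover $E\subseteq\bigcup_j E_j$, subadditivity of the outer measure yields $E_{j_0}$ with $\mu^{*}(E_{j_0}\cap E)>0$; put $G=E_{j_0}\cap E$. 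Each $x\in G$ admits infinitely many $n$ with $\mu(\overline B_{F_n}(x,5\varepsilon))<e^{-tb(|F_n|)+f_{F_n}(x)}$, so the balls $\{\overline B_{F_n}(x,\varepsilon)\}$ with this property form a fine cover of $G$. I would invoke a Vitali-type ($5r$) covering lemma for Bowen balls to extract a pairwise disjoint subfamily $\{\overline B_{F_{n_i}}(x_i,\varepsilon)\}$ with $n_i\ge N$ and $G\subseteq\bigcup_i\overline B_{F_{n_i}}(x_i,5\varepsilon)$, so that $\mu(G)\le\sum_i\mu(\overline B_{F_{n_i}}(x_i,5\varepsilon))<\sum_i e^{-tb(|F_{n_i}|)+f_{F_{n_i}}(x_i)}$, and hence for $\alpha<t$,
$$\sum_i e^{-\alpha b(|F_{n_i}|)+f_{F_{n_i}}(x_i)}\ge e^{(t-\alpha)b(|F_N|)}\sum_i e^{-tb(|F_{n_i}|)+f_{F_{n_i}}(x_i)}>e^{(t-\alpha)b(|F_N|)}\mu(G).$$
For any fixed $N_0$ one may run this construction using only balls of index $\ge N$ for each $N\ge N_0$ (still a fine cover), so the resulting disjoint family competes in $M^{P}(N_0,\alpha,\varepsilon,G,\{F_n\},f,\mathbf b)$; since $b(|F_N|)\to+\infty$, taking $N\to+\infty$ gives $M^{P}(N_0,\alpha,\varepsilon,G,\{F_n\},f,\mathbf b)=+\infty$ for every $N_0$, hence $M^{P}(\alpha,\varepsilon,G,\{F_n\},f,\mathbf b)=+\infty$ and $\sum_j M^{P}(\alpha,\varepsilon,E_j,\{F_n\},f,\mathbf b)=+\infty$. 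Thus $M^{\mathcal P}(\alpha,\varepsilon,E,\{F_n\},f,\mathbf b)=+\infty$, and letting $\alpha\uparrow t$, $\varepsilon\to0$, and $t\uparrow s$ yields $P^{P}(Z,\{F_n\},f,\mathbf b)\ge s$.

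The main obstacle is precisely the covering step in (2): the balls $\overline B_{F_n}(x,\varepsilon)$ live in the different metrics $d_{F_n}$, so the classical $5r$-lemma does not apply verbatim. The observation that rescues it is that for the increasing Følner sequence $F_n\subseteq F_m$ implies $d_{F_n}\le d_{F_m}$; consequently, if $\overline B_{F_{n_i}}(x_i,\varepsilon)$ and $\overline B_{F_{n_j}}(x_j,\varepsilon)$ with $n_i\le n_j$ meet, then $\overline B_{F_{n_j}}(x_j,\varepsilon)\subseteq\overline B_{F_{n_i}}(x_i,3\varepsilon)$ by the triangle inequality in $d_{F_{n_i}}$, so a greedy selection processing balls in increasing order of index produces a disjoint family with a bounded enlargement factor. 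The remaining points are bookkeeping: matching the radius $5\varepsilon$ in the density condition with the enlargement of the covering lemma, and handling the open/closed distinction and the measurability of the sets $Z_m$, none of which should present an essential difficulty.
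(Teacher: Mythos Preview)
Your argument for (1) is essentially the paper's mass-distribution argument, slightly streamlined: because you note upfront that $g_x(\varepsilon)\le s$ for \emph{every} $\varepsilon>0$, you can fix $\varepsilon$ from the start and decompose $Z$ only over the tail index $N$, whereas the paper first writes $Z=\bigcup_m Z_m$ with $Z_m$ defined through $\varepsilon\in(0,1/m)$ and then passes to $Z_{m,N}$.

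For (2) the two proofs take genuinely different routes to the same conclusion. The paper sidesteps the multi-metric covering issue altogether: for each $E\subseteq Z$ of positive (outer) measure it uses the pigeonhole estimate $\sum_{n\ge N}\frac{1}{n(n+1)}\le 1$ to locate a \emph{single} level $n\ge N$ with $\mu(E_n)\ge\mu(E)/(n(n+1))$, and then applies the ordinary $5r$-lemma (Lemma~\ref{l2}) in the one metric $d_{F_n}$ to $\{B_{F_n}(x,\varepsilon/5):x\in E_n\}$. The resulting disjoint family gives
\[
M^{P}\bigl(N,\beta,\tfrac{\varepsilon}{5},E,\{F_n\},f,\mathbf{b}\bigr)\ \ge\ e^{b(|F_n|)\delta}\,\frac{\mu(E)}{n(n+1)},
\]
and Condition~\ref{con1} is invoked precisely here, to make $e^{b(|F_n|)\delta}/(n(n+1))\to\infty$. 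Your approach---a Vitali-type selection across varying levels, using that $F_{n}\subseteq F_{m}$ forces $d_{F_n}\le d_{F_m}$, so a ball of larger index meeting a selected ball of smaller index lies in its $3\varepsilon$-enlargement---is correct (processing levels $n=N,N+1,\dots$ and at each stage adjoining a maximal family disjoint from what has already been selected makes the sketch rigorous, and separability keeps the selected family countable). It even avoids the $1/(n(n+1))$ loss, so Condition~\ref{con1} is not actually needed in this direction. The trade-off is that you must carry out a non-standard covering lemma, while the paper keeps the covering step entirely classical by first reducing to a fixed metric.
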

To prove Theorem \ref{T2}, we need the following lemma (cf. \cite[Theorem 2.1]{Mat}).
\begin{lem}\label{l2}
   (5r-lemma) Let $(X,d)$ be a compact metric space and $\mathcal{B}=\left \{ B(x_{i},r_{i}) \right \} _{i \in I}$ be a family of open (or closed) balls in $X$. Then there exists a finite or countable subfamily $\mathcal{B'}=\left \{ B(x_{i},r_{i}) \right \} _{i \in I'}$ of pairwise disjoint balls in $\mathcal{B}$ such that
   $$\bigcup_{B \in \mathcal{B} }B \subset \bigcup_{i \in I'}B(x_{i},5r_{i}) .$$
\end{lem}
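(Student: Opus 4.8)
The plan is to prove this classical covering lemma by the standard greedy (Vitali-type) selection argument, organised according to dyadic scales of the radii. Since $X$ is compact, $\operatorname{diam}(X)<\infty$; any ball of radius exceeding $\operatorname{diam}(X)$ equals $X$, so if such a ball occurs in $\mathcal{B}$ I would simply take $\mathcal{B}'$ to consist of that single ball and be done. Thus I may assume $R:=\sup_{i\in I}r_{i}\le\operatorname{diam}(X)<\infty$ (and that every $r_{i}>0$, discarding empty balls). I would then split the index set into the dyadic layers
$$I_{n}=\Bigl\{\,i\in I:\ \frac{R}{2^{n}}<r_{i}\le\frac{R}{2^{n-1}}\,\Bigr\},\qquad n=1,2,\dots,$$
so that each $i\in I$ lies in exactly one $I_{n}$ and within a single layer all radii agree up to a factor of $2$.

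Next I would construct $\mathcal{B}'$ recursively, one layer at a time. By Zorn's lemma choose a maximal pairwise disjoint subcollection $\mathcal{B}_{1}'$ of $\{B(x_{i},r_{i}):i\in I_{1}\}$; having chosen $\mathcal{B}_{1}',\dots,\mathcal{B}_{n-1}'$, choose $\mathcal{B}_{n}'$ to be a maximal subcollection of $\{B(x_{i},r_{i}):i\in I_{n}\}$ whose members are pairwise disjoint and also disjoint from every ball in $\mathcal{B}_{1}'\cup\cdots\cup\mathcal{B}_{n-1}'$. A Zorn's-lemma argument applies because the union of a nested chain of such subcollections is again one. Putting $\mathcal{B}'=\bigcup_{n\ge1}\mathcal{B}_{n}'$ and letting $I'$ index it, the family $\mathcal{B}'$ is by construction a pairwise disjoint subfamily of $\mathcal{B}$.

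The heart of the argument is the inclusion. Fix any $B(x,r)\in\mathcal{B}$ with $r=r_{i}$, $i\in I_{n}$. By the maximality built into the $n$-th step, $B(x,r)$ cannot be adjoined to the already-chosen balls without destroying disjointness, hence it meets some selected ball $B(y,\rho)\in\mathcal{B}_{1}'\cup\cdots\cup\mathcal{B}_{n}'$. That ball comes from a layer $I_{m}$ with $m\le n$, so $\rho>R/2^{m}\ge R/2^{n}$, while $r\le R/2^{n-1}=2\cdot R/2^{n}$; therefore $r<2\rho$. Choosing $z\in B(x,r)\cap B(y,\rho)$ and an arbitrary $w\in B(x,r)$, the triangle inequality gives
$$d(w,y)\le d(w,x)+d(x,z)+d(z,y)<r+r+\rho=2r+\rho<5\rho,$$
so $w\in B(y,5\rho)$. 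Hence $B(x,r)\subset B(y,5\rho)$, a member of $\{B(x_{i},5r_{i}):i\in I'\}$, and since $B(x,r)$ was arbitrary we obtain $\bigcup_{B\in\mathcal{B}}B\subset\bigcup_{i\in I'}B(x_{i},5r_{i})$. The identical estimate works for closed balls, where $d(w,x)\le r$, $d(x,z)\le r$ and $d(z,y)\le\rho$ still yield $d(w,y)\le 2r+\rho<5\rho$.

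Finally I would verify that $\mathcal{B}'$ is at most countable. A compact metric space is separable, so fix a countable dense set $D$. Each selected ball contains the nonempty open ball $B(x_{i},r_{i})$ (these are pairwise disjoint in both the open and the closed case), and such an open ball contains a point of $D$; distinct disjoint balls thereby pick out distinct points of $D$, so $I'$ is at most countable. The one point demanding care is the dyadic bookkeeping in the covering step: one must guarantee that the selected ball met by $B(x,r)$ has radius at least $r/2$, and this is precisely why the layers are cut on a dyadic scale and why maximality is imposed cumulatively against all earlier layers rather than layer-by-layer in isolation; the factor $5$ then emerges from the worst case $r<2\rho$ in the triangle-inequality bound.
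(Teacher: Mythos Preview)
The paper does not supply its own proof of this lemma; it is simply quoted from Mattila \cite[Theorem 2.1]{Mat}. Your argument is correct and is the standard Vitali-type proof found there: greedy selection of disjoint balls in decreasing dyadic scales of radius, the triangle inequality together with $r<2\rho$ yielding the factor $5$, and countability of $\mathcal{B}'$ via separability of the compact metric space.
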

\textit{Proof of Theorem 3.7}.
We now prove the first assertion. For a fixed $\beta>s$, let
$$Z_{m}=\left \{ x \in Z:\limsup_{n \to +\infty} \frac{-\log{\mu}(B_{F_{n}}(x,\varepsilon ))+f_{F_{n}} (x)}{b(|  F_{n} |)} \leq\frac{\beta+s}{2} ,\ \forall \varepsilon  \in (0,\frac{1}{m} ) \right \}.$$
It is easy to see that $Z=\bigcup_{m=1}^{\infty}Z_{m}$, since $\overline{P}_{\mu }(\left \{ F_{n} \right \},f ,\mathbf{b}) \leq s$ for all $x \in Z$. Fix $m\geq1$ and $\varepsilon \in (0,\frac{1}{m})$. For each $x \in Z_{m}$, there exists $N \in \mathbb{N}$ such that
$$\mu(B_{F_{n}}(x,\varepsilon ))\geq e^{-b(|F_{n}|)(\frac{\beta +s}{2})+f_{F_{n}}(x)}, \ \forall n\geq N.$$
Let
$$Z_{m,N}=\left \{ x \in Z_{m}:  \mu(B_{F_{n}}(x,\varepsilon ))\geq e^{-b(|F_{n}|)(\frac{\beta +s}{2})+f_{F_{n}}(x)}, \ \forall n\geq N\right \}.$$
For the above $N$, we can choose $L$ large enough such that $L>>N$. For a finite or countable disjoint family $\mathcal{F}=\left\{\overline{B}_{F_{n_{i}}}(x_{i},\varepsilon) \right\}_{i}$ with $x_{i} \in Z_{m,N}$ and $n_{i}\geq L$, we have $b(|F_{n_{i}}|)\geq \log{L}$, since $\mathbf{b}$ and $\left\{F_{n}\right\}$ satisfying Condition \ref{con1}. Hence
$$\begin{aligned}
   \sum_{i}e^{-b(|F_{n_{i}}|)\beta + f_{F_{n_{i}}}(x_{i})}&\leq L^{-(\beta -s)/2}\sum_{i}e^{-b(|F_{n_{i}}|)[(\beta +s)/2]+f_{F_{n_{i}}}(x_{i})} \\
   &\leq L^{-(\beta -s)/2}\sum_{i}\mu(\overline{B}_{F_{n_{i}}}(x_{i},\varepsilon)) \\
   &\leq  L^{-(\beta -s)/2}.
\end{aligned}$$
Since $\mathcal{F}$ is arbitrary, it follows that
$$M^{P}(L,\beta,\varepsilon,Z_{m,N},\left\{F_{n}\right\},f,\mathbf{b})\leq L^{-(\beta -s)/2}.$$
Hence $M^{P}(\beta,\varepsilon,Z_{m,N},\left\{F_{n}\right\},f,\mathbf{b})=0$. It is easy to see that $Z_{m}=\bigcup_{N=1}^{\infty}Z_{m,N}$. Then we have $M^{\mathcal{P}}(\beta,\varepsilon,Z_{m},\left\{F_{n}\right\},f,\mathbf{b})=0$. This implies that $P^{P}(Z_{m},\left\{F_{n}\right\},f,\mathbf{b}) \leq \beta$. Thus
$$P^{P}(Z,\left\{F_{n}\right\},f,\mathbf{b})=\sup_{m}P^{P}(Z_{m},\left\{F_{n}\right\},f,\mathbf{b}) \leq \beta.$$
Since $\beta$ is arbitrary, then we have $P^{P}(Z,\left\{F_{n}\right\},f,\mathbf{b}) \leq s$.

We shall prove the second assertion. Fix $\beta<s $ and let $\delta=\frac{s-\beta}{2}$. Since $\overline{P}_{\mu }(\left \{ F_{n} \right \},f,\mathbf{b} ) \geq s$, there exists $\varepsilon'>0$ such that
   $$\limsup_{n \to +\infty} \frac{-\log{\mu}(B_{F_{n}}(x,\varepsilon )) +f_{F_{n}}(x)}{b(| F_{n}|) }>\beta +\delta , \ \forall \varepsilon \in (0,\varepsilon').$$
   For any $E \subseteq Z$ with $\mu(E)>0$, we define
   $$E_{n}=\left \{ x \in E:\mu(B_{F_{n}}(x,\varepsilon ))<e^{-b(| F_{n}|)(\beta+\delta)+f_{F_{n}}(x) } \right \}, \ n \in \mathbb{N}.$$
   Then we have $E=\bigcup_{n=N}^{\infty}E_{n}$ for any $N \in \mathbb{N}$. Fix $N \in \mathbb{N}$, there exists $n \geq N$ such that
   $$\mu(E_{n}) \geq \frac{1}{n(n+1)}\mu(E).$$
   Fix such $n$ and let $\mathcal{B}=\left \{ B_{F_{n}}(x,\frac{\varepsilon }{5} ):x \in E_{n} \right \} $. By Lemma \ref{l2} (using the metric $d_{F_{n}}$ instead of $d$), there exists a finite pairwise disjoint family $\left\{ B_{F_{n}}(x_{i},\frac{\varepsilon }{5}) \right\}_{i \in I}$ with $x_{i} \in E_{n}$ such that
   $$E_{n} \subset \bigcup_{x \in E_{n}}B_{F_{n}}(x,\frac{\varepsilon}{5}) \subset \bigcup_{i \in I} B_{F_{n}}(x_{i},\varepsilon).$$
   Hence
   $$\begin{aligned}
      M^{P}(N,\beta,\frac{\varepsilon}{5},E,\left\{F_{n}\right\},f,\mathbf{b}) &\geq M^{P}(N,\beta,\frac{\varepsilon}{5},E_{n},\left\{F_{n}\right\},f,\mathbf{b}) \geq \sum_{i \in I}e^{-b( | F_{n} |) \beta+f_{F_{n}}(x_{i})}\\
      &=e^{b(|F_{n}|)\delta}\sum_{i \in I}e^{-b(| F_{n} |) (\beta+\delta)+f_{F_{n}}(x_{i})} \geq e^{b(|F_{n}|)\delta}\sum_{i \in I}\mu(B_{F_{n}}(x_{i},\varepsilon)) \\
      &\geq e^{b(|F_{n}|)\delta}\mu(E_{n}) \geq e^{b(|F_{n}|)\delta} \frac{\mu(E)}{n(n+1)}.
   \end{aligned}$$
   Since $\mathbf{b}$ and $\left\{F_{n}\right\}$ satisfying Condition \ref{con1}, we have
   $$\lim_{n \to +\infty}\frac{e^{b(|F_{n}|)\delta}}{n(n+1)}=+\infty.$$
   Hence
   $$ M^{P}(\beta,\frac{\varepsilon}{5},E,\left\{F_{n}\right\},f,\mathbf{b})=+\infty.$$
   Since $E \subseteq Z$ and $\beta<s$ are arbitrary, we have $M^{P}(s,\frac{\varepsilon}{5},Z,\left\{F_{n}\right\},f,\mathbf{b})=+\infty$. It implies that
   $$ P^{P}(Z,\left\{F_{n}\right\},f,\mathbf{b}) \geq  P^{P}(\frac{\varepsilon}{5},Z,\left\{F_{n}\right\},f,\mathbf{b}) \geq s.$$
\begin{rem}
   Zhong and Chen \cite{Zh} proved Theorem \ref{T2} when $G=\mathbb{Z}$ and ${b(|F_{n}|)=|F_{n}|}$ for all $n\in \mathbb{N}$.
\end{rem}

\begin{prop}\label{p5}
   Let $(X,G)$ be a TDS with a $G$-action and a F{\o}lner  sequence $\left\{F_{n}\right\}$ satisfying Condition \ref{con1}. If $f \in C(X,\mathbb{R})$ and $\mu \in M(X)$, then
   $$ \overline{P}_{\mu}(\left\{F_{n}\right\},f,\mathbf{b}) \leq P_{\mu}^{KP}(\left\{F_{n}\right\},f,\mathbf{b}).$$
\end{prop}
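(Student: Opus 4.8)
The plan is to reduce to the non-Katok quantity and then bound it below by the integral through a fixed-scale Billingsley estimate. By the last equality of Proposition \ref{p4} we have $P^{KP}_{\mu}(\{F_n\},f,\mathbf{b})=P^{P}_{\mu}(\{F_n\},f,\mathbf{b})$, so it suffices to prove $\overline{P}_{\mu}(\{F_n\},f,\mathbf{b})\le P^{P}_{\mu}(\{F_n\},f,\mathbf{b})$. Write
$$\overline{g}_{\varepsilon}(x)=\limsup_{n\to+\infty}\frac{-\log\mu(B_{F_n}(x,\varepsilon))+f_{F_n}(x)}{b(|F_n|)},\qquad g(x)=\overline{P}_{\mu}(x,\{F_n\},f,\mathbf{b})=\lim_{\varepsilon\to 0}\overline{g}_{\varepsilon}(x).$$
As $B_{F_n}(x,\varepsilon)$ shrinks when $\varepsilon$ decreases, $\overline{g}_{\varepsilon}(x)$ increases to $g(x)$ as $\varepsilon\downarrow 0$, and by Definition \ref{d1} (with $Z=X$) one has $\overline{P}_{\mu}(\{F_n\},f,\mathbf{b})=\int_X g\,d\mu$. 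Recalling $P^{P}_{\mu}(\{F_n\},f,\mathbf{b})=\lim_{\varepsilon\to 0}\lim_{\delta\to 0}\inf\{P^{P}(\varepsilon,Z,\{F_n\},f,\mathbf{b}):\mu(Z)\ge 1-\delta\}$, it is then enough to show that for every $c<\int_X g\,d\mu$ there are $\varepsilon>0$ and $\delta>0$ such that $P^{P}(\varepsilon,Z,\{F_n\},f,\mathbf{b})\ge c$ for all $Z$ with $\mu(Z)\ge 1-\delta$.

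The heart of the argument is a fixed-scale version of Theorem \ref{T2}(2): if $\varepsilon>0$, $s\in\mathbb{R}$ and $Z'\subseteq X$ satisfy $\mu(Z')>0$ and $\overline{g}_{\varepsilon}(x)>s$ for every $x\in Z'$, then $P^{P}(\varepsilon/5,Z',\{F_n\},f,\mathbf{b})\ge s$. I would prove this by repeating the proof of Theorem \ref{T2}(2) with $\varepsilon$ frozen throughout instead of sent to $0$; the point is that the inequality $\overline{g}_{\varepsilon}(x)>s$ is imposed uniformly on $Z'$ at this very scale, so no $x$-dependent threshold appears. Fix $\beta<s$ and $\gamma\in(\beta,s)$. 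To see $M^{\mathcal{P}}(\beta,\varepsilon/5,Z',\{F_n\},f,\mathbf{b})=+\infty$, take any countable cover $Z'\subseteq\bigcup_i Z_i$; some piece $E:=Z_i\cap Z'$ has $\mu(E)>0$. With $E_n=\{x\in E:\mu(B_{F_n}(x,\varepsilon))<e^{-b(|F_n|)\gamma+f_{F_n}(x)}\}$, the hypothesis gives $E=\bigcup_{n\ge N}E_n$ for every $N$, so some $n\ge N$ satisfies $\mu(E_n)\ge \mu(E)/(n(n+1))$; applying the $5r$-lemma (Lemma \ref{l2}) in the metric $d_{F_n}$ yields a disjoint subfamily whose $\varepsilon/5$-balls pack $E_n$ and whose $\varepsilon$-blow-ups cover it, whence
$$M^{P}(N,\beta,\tfrac{\varepsilon}{5},E,\{F_n\},f,\mathbf{b})\ge e^{b(|F_n|)(\gamma-\beta)}\sum_i\mu(B_{F_n}(x_i,\varepsilon))\ge e^{b(|F_n|)(\gamma-\beta)}\frac{\mu(E)}{n(n+1)}.$$
Since $|F_n|\to+\infty$ (as $\{F_n\}$ is strictly increasing) and Condition \ref{con1} forces $b(|F_n|)\to+\infty$, the right-hand side tends to $+\infty$ with $N$, so $M^{P}(\beta,\varepsilon/5,E)=+\infty$; as $E\subseteq Z_i$, monotonicity gives $\sum_i M^{P}(\beta,\varepsilon/5,Z_i)=+\infty$, and the cover being arbitrary yields $M^{\mathcal{P}}(\beta,\varepsilon/5,Z')=+\infty$. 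Letting $\beta\uparrow s$ gives $P^{P}(\varepsilon/5,Z',\{F_n\},f,\mathbf{b})\ge s$.

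Finally I would combine this lemma with an elementary measure estimate. Fix $c<\int_X g\,d\mu$. Since $\mu$ is a probability measure, $\mu(\{g>c\})>0$; and because $\{g>c\}=\bigcup_{k}\{\overline{g}_{1/k}>c\}$ is an increasing union, continuity from below produces some $\varepsilon_0>0$ with $p:=\mu(\{\overline{g}_{\varepsilon_0}>c\})>0$. Choose $\delta\in(0,p)$. For any $Z$ with $\mu(Z)\ge 1-\delta$, the set $Z'=Z\cap\{\overline{g}_{\varepsilon_0}>c\}$ has $\mu(Z')\ge p-\delta>0$ and carries $\overline{g}_{\varepsilon_0}>c$, so the lemma gives $P^{P}(\varepsilon_0/5,Z',\{F_n\},f,\mathbf{b})\ge c$, and since $Z'\subseteq Z$ the monotonicity of $P^{P}(\varepsilon_0/5,\cdot)$ in its set argument (Proposition \ref{p2}(2) holds verbatim at a fixed scale) upgrades this to $P^{P}(\varepsilon_0/5,Z,\{F_n\},f,\mathbf{b})\ge c$. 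Hence $\inf\{P^{P}(\varepsilon_0/5,Z,\{F_n\},f,\mathbf{b}):\mu(Z)\ge 1-\delta\}\ge c$, so $P^{P}_{\mu}(\{F_n\},f,\mathbf{b})\ge c$; letting $c\uparrow\int_X g\,d\mu$ and invoking Proposition \ref{p4} completes the proof. I expect the only genuine work to lie in the fixed-scale estimate of the second paragraph—specifically, verifying that each step of the proof of Theorem \ref{T2}(2) survives with $\varepsilon$ held fixed, and passing correctly from the set function $M^{P}$ to the packing set function $M^{\mathcal{P}}$ by testing against an arbitrary cover.
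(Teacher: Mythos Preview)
Your proposal is correct and follows essentially the same route as the paper. The paper works directly with $P^{KP}_{\mu}$ rather than first reducing to $P^{P}_{\mu}$ via Proposition~\ref{p4}, but the core argument is identical: fix $s<\overline{P}_{\mu}(\{F_n\},f,\mathbf{b})$, locate a positive-measure set $A$ on which the fixed-scale upper limit exceeds $s+\beta$, choose $\delta<\mu(A)$, and then for any family $\{Z_i\}$ with $\mu(\bigcup Z_i)\ge 1-\delta$ pick $i$ with $\mu(A\cap Z_i)>0$ and run the $5r$-lemma/pigeonhole estimate exactly as you do to force $M^{P}(s,\varepsilon/5,Z_i,\cdot)=+\infty$. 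Your packaging of that estimate as a ``fixed-scale Theorem~\ref{T2}(2)'' and your detour through $P^{P}_{\mu}$ are purely organizational differences; one small point worth tightening is the justification that $e^{b(|F_n|)(\gamma-\beta)}/(n(n+1))\to+\infty$, which needs not just $b(|F_n|)\to+\infty$ but the quantitative bound $b(|F_n|)\ge |F_n|\ge n$ coming from Condition~\ref{con1} and the strict monotonicity of $\{F_n\}$.
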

\begin{proof}
   For any $s<\overline{P}_{\mu}(\left\{F_{n}\right\},f,\mathbf{b})$, we can find $\varepsilon,\ \beta>0$ and $A \subseteq X$ with $\mu(A)>0$ such that
   $$\limsup_{n \to +\infty} \frac{-\log{\mu}(B_{F_{n}}(x,\varepsilon )) +f_{F_{n}}(x)}{b(| F_{n}|) }>\beta +s , \ \forall x \in A.$$
   Fix $\delta \in (0,\mu(A))$, we shall show that
   $$M_{\mu}^{\mathcal{P}}(s,\frac{\varepsilon}{5},\delta,\left \{ F_{n} \right \}, f,\mathbf{b})=+\infty.$$
   Let $\left\{Z_{i}\right\}_{i \in I}$ be a countable family with $\mu(\bigcup_{i}Z_{i})>1-\delta$. It follows that
   $$\mu(A \cap \bigcup_{i}Z_{i}) \geq \mu(A)-\delta >0.$$
   Hence there exists $i$ such that $\mu(A \cap Z_{i})>0$. For such $i$, we define
   $$E_{n}=\left \{ x \in A \cap Z_{i}:\mu(B_{F_{n}}(x,\varepsilon ))<e^{-b(| F_{n} |)(\beta+s)+f_{F_{n}}(x) } \right \}, \ n \in \mathbb{N}.$$
   It is clear that $A \cap Z_{i}=\bigcup^{\infty}_{n=N}E_{n}$ for each $N \in \mathbb{N}$. Fix $N \in \mathbb{N}$, then $\sum_{n=N}^{\infty }\mu(E_{n}) \geq \mu(A \cap Z_{i}) $. It follows that there exists $n \geq N$ such that
   $$\mu(E_{n})\geq \frac{1}{n(n+1)}\mu(A \cap Z_{i}).$$
   Fix such $n$ and let $\mathcal{B}=\left \{ B_{F_{n}}(x,\frac{\varepsilon }{5} ):x \in E_{n} \right \} $. Then there exists a finite pairwise disjoint family $\left\{ B_{F_{n}}(x_{i},\frac{\varepsilon }{5}) \right\}_{i \in I}$ with $x_{i} \in E_{n}$ such that
   $$E_{n} \subset \bigcup_{x \in E_{n}}B_{F_{n}}(x,\frac{\varepsilon}{5}) \subset \bigcup_{i \in I} B_{F_{n}}(x_{i},\varepsilon).$$
   Hence
   $$\begin{aligned}
      M^{P}(N,s,\frac{\varepsilon}{5},Z_{i},\left\{F_{n}\right\},f,\mathbf{b}) &\geq M^{P}(N,s,\frac{\varepsilon}{5},A\cap Z_{i},\left\{F_{n}\right\},f,\mathbf{b}) \geq M^{P}(N,s,\frac{\varepsilon}{5},E_{n},\left\{F_{n}\right\},f,\mathbf{b})\ \\
      &\geq \sum_{i \in I}e^{-b(| F_{n}|) s+f_{F_{n}}(x_{i})} =e^{b(|F_{n}|)\beta}\sum_{i \in I}e^{-b(| F_{n} |) (\beta+s)+f_{F_{n}}(x_{i})} \\
      &\geq e^{b(|F_{n}|)\beta}\sum_{i \in I}\mu(B_{F_{n}}(x_{i},\varepsilon)) \geq e^{b(|F_{n}|)\beta}\mu(E_{n}) \geq e^{b(|F_{n}|)\beta} \frac{\mu(E)}{n(n+1)}.
   \end{aligned}$$
   Since $\mathbf{b}$ and $\left\{F_{n}\right\}$ satisfy
   $$\lim_{n \to +\infty}\frac{e^{b(|F_{n}|)\beta}}{n(n+1)}=+\infty,$$
   we have
   $$M^{P}(s,\frac{\varepsilon}{5},Z_{i},\left\{F_{n}\right\},f,\mathbf{b})=\lim_{N \to +\infty}M^{P}(N,s,\frac{\varepsilon}{5},Z_{i},\left\{F_{n}\right\},f,\mathbf{b})=+\infty.$$
   It follows that $\overline{P}_{\mu}(\left\{F_{n}\right\},f,\mathbf{b}) \leq P_{\mu}^{KP}(\left\{F_{n}\right\},f,\mathbf{b})$.
\end{proof}

\section{Variational principle}\label{sec4}%%%%
In this section, we will prove the variational principle among the scaled packing topological pressure and other kinds of pressures.
\begin{thm}\label{T3}
    Let $(X,G)$ be a TDS with a $G$-action and a F{\o}lner  sequence $\left\{F_{n}\right\}$ satisfying Condition \ref{con1}. Suppose that $f \in C(X,\mathbb{R})$, $Z \subset X$ is a nonempty compace set. If  $ P^{P}(Z,\left\{F_{n}\right\},f) > \|f\|$, where $\|f\|:=\sup_{x \in X}|f(x)|$, then
   $$\begin{aligned}
      P^{P}(Z,\left\{F_{n}\right\},f,\mathbf{b})&=\sup \{ \overline{P}_{\mu }(\left \{ F_{n} \right \},f,\mathbf{b} ):\mu \in M(X), \ \mu(Z)=1 \}\\
      &=\sup \{P_{\mu }^{P}(\left \{ F_{n} \right \},f ,\mathbf{b}):\mu \in M(X), \ \mu(Z)=1 \}\\
      &=\sup \{P_{\mu }^{KP}(\left \{ F_{n} \right \},f ,\mathbf{b}):\mu \in M(X), \ \mu(Z)=1 \}.\\
   \end{aligned}$$
\end{thm}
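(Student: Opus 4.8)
The plan is to bracket the four quantities by an ordering obtained from soft arguments and then close it with a single measure construction. Write $A:=P^{P}(Z,\{F_{n}\},f,\mathbf{b})$, and let $B$, $C$, $D$ denote the three suprema over $\{\mu\in M(X):\mu(Z)=1\}$ of $\overline{P}_{\mu}(\{F_{n}\},f,\mathbf{b})$, $P_{\mu}^{P}(\{F_{n}\},f,\mathbf{b})$ and $P_{\mu}^{KP}(\{F_{n}\},f,\mathbf{b})$ respectively. I will show $B\le C=D\le A$ by soft inequalities and then $A\le B$ by a Moran-type construction; together these force $A=B=C=D$.

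For the soft ordering, fix $\mu$ with $\mu(Z)=1$. Proposition \ref{p5} gives $\overline{P}_{\mu}(\{F_{n}\},f,\mathbf{b})\le P_{\mu}^{KP}(\{F_{n}\},f,\mathbf{b})$, while Theorem \ref{T5} gives $P_{\mu}^{KP}(\{F_{n}\},f,\mathbf{b})=P_{\mu}^{P}(\{F_{n}\},f,\mathbf{b})$; taking suprema yields $B\le C=D$. For $C\le A$, note that since $\mu(Z)=1\ge 1-\delta$ for every $\delta>0$, the set $Z$ is always admissible in the infimum defining $P_{\mu}^{P}$ in Definition \ref{d2}, so $P_{\mu}^{P}(\{F_{n}\},f,\mathbf{b})\le\lim_{\varepsilon\to 0}P^{P}(\varepsilon,Z,\{F_{n}\},f,\mathbf{b})=A$; taking the supremum over $\mu$ gives $C\le A$, hence $B\le C=D\le A$. (The reverse inequality $A\ge B$ is therefore already contained in this chain; alternatively it follows directly from Billingsley's Theorem \ref{T2}(2).)

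It remains to prove $A\le B$. Fix $s$ with $\|f\|<s<A$, which is possible by the hypothesis $A>\|f\|$, and $\varepsilon>0$ small enough that $P^{P}(\varepsilon,Z,\{F_{n}\},f,\mathbf{b})>s$, i.e.\ $M^{\mathcal{P}}(s,\varepsilon,Z,\{F_{n}\},f,\mathbf{b})=+\infty$. First I would run a saturation argument, using the countable subadditivity of $M^{\mathcal{P}}$ (Proposition \ref{p2}(3-b)) together with compactness of $Z$, to discard the countably many relatively open pieces carrying no packing pre-measure and pass to a nonempty compact $K\subseteq Z$ on which $M^{P}(s,\varepsilon,K\cap U)=+\infty$ for every open $U$ with $K\cap U\neq\emptyset$. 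On $K$ I would then build a nested sequence of finite families of pairwise disjoint closed Bowen balls: inside each ball chosen at stage $k-1$, the local infiniteness of $M^{P}$ lets me select a finite disjoint family $\{\overline{B}_{F_{n}}(y,\varepsilon)\}$ with $n$ arbitrarily large and total weight $\sum e^{-s\,b(|F_{n}|)+f_{F_{n}}(y)}\ge 1$. Distributing mass down this tree in proportion to the weights defines, by the standard Moran/Kolmogorov construction, a Borel probability measure $\mu\in M(X)$ (no invariance is needed) carried by a Cantor-like set $K_{\infty}\subseteq K\subseteq Z$, so $\mu(Z)=1$. The construction is arranged so that for $\mu$-a.e.\ $x$, along the stages $n_{k}\to\infty$ at which $x$ lies in a stage-$k$ ball one has $\mu(B_{F_{n_{k}}}(x,\varepsilon))\lesssim e^{-s\,b(|F_{n_{k}}|)+f_{F_{n_{k}}}(x)}$ up to an $O(1)$ factor, whence
$$\limsup_{n\to\infty}\frac{-\log\mu(B_{F_{n}}(x,\varepsilon))+f_{F_{n}}(x)}{b(|F_{n}|)}\ge s.$$
Since the $\varepsilon\to 0$ limit defining $\overline{P}_{\mu}(x,\{F_{n}\},f,\mathbf{b})$ is a supremum, a single fixed $\varepsilon$ suffices, so $\overline{P}_{\mu}(x,\{F_{n}\},f,\mathbf{b})\ge s$ for $\mu$-a.e.\ $x$; integrating, $\overline{P}_{\mu}(\{F_{n}\},f,\mathbf{b})=\int_{Z}\overline{P}_{\mu}(x,\{F_{n}\},f,\mathbf{b})\,\mathrm{d}\mu\ge s$, so $B\ge s$. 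Letting $s\uparrow A$ gives $A\le B$.

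The main obstacle is the upper mass estimate $\mu(B_{F_{n_{k}}}(x,\varepsilon))\lesssim e^{-s\,b(|F_{n_{k}}|)+f_{F_{n_{k}}}(x)}$: an $\varepsilon$-ball may meet several of the pairwise disjoint construction balls of comparable radius, and one must bound uniformly how much sibling mass it can absorb, which is where the $5r$-covering Lemma \ref{l2} (applied in the metric $d_{F_{n_{k}}}$) and the disjointness of the chosen closed Bowen balls enter. The hypothesis $s>\|f\|$ is exactly what keeps the relevant tail sums $\sum_{n}e^{(\|f\|-s)b(|F_{n}|)}$ convergent via Condition \ref{con1} (as in the proof of Lemma \ref{l1}), so that the normalizing constants and error terms in the Moran measure remain bounded; without it the weights need not decay and the estimate collapses. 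Note also that Theorem \ref{T1} does not shortcut this step, since $Z$ is assumed only compact and not $G$-invariant, so the construction genuinely cannot be replaced by a reduction to the Bowen pressure.
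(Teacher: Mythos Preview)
Your overall strategy matches the paper's Feng--Huang construction, and the soft chain $B\le C=D\le A$ is correct. However, two technical mechanisms in your sketch of the hard direction $A\le B$ are not the ones that actually carry the argument, and as you describe them they would not work.

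First, the saturation must be run at an auxiliary level $t\in(s,P^{P}(\varepsilon,Z,\ldots))$, not at $s$: the paper removes the open $J$ with $M^{\mathcal{P}}(t,\varepsilon,Z\cap J,\ldots)=0$, and on the remainder every nonempty relatively open piece has $M^{\mathcal{P}}(t,\cdot)>0$; the critical-value jump then forces $M^{P}(s,\cdot)=+\infty$, which is exactly the hypothesis of Lemma~\ref{l3}. Saturating directly at $s$ yields only $M^{\mathcal{P}}(s,\cdot)>0$, possibly finite, and Lemma~\ref{l3} could not be invoked. Second, the mass bound does \emph{not} come from the $5r$-lemma: at stage $i$ the centers $x\in K_{i}$ carry \emph{different} Bowen metrics $d_{F_{m_{i}(x)}}$, so no single covering lemma applies. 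Instead the paper introduces auxiliary radii $\gamma_{i}\downarrow 0$ chosen so small that for every $x\in K_{i}$ and every $z\in\overline{B}(x,\gamma_{i})$ the Bowen ball $\overline{B}_{F_{m_{i}(x)}}(z,\varepsilon)$ is disjoint from all sibling $d$-balls $\overline{B}(y,\gamma_{i})$, $y\in K_{i}\setminus\{x\}$; since the limit measure is supported in $\bigcup_{y\in K_{i}}\overline{B}(y,\gamma_{i})$, this gives $\hat{\mu}(\overline{B}_{F_{m_{i}(x)}}(z,\varepsilon))\le\hat{\mu}(\overline{B}(x,\gamma_{i}))\le C\,e^{-s\,b(|F_{m_{i}(x)}|)+f_{F_{m_{i}(x)}}(x)}$ directly. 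Finally, the role of $s>\|f\|$ is not a tail-sum estimate but Lemma~\ref{l3}: it makes each individual weight $e^{-s\,b(|F_{n}|)+f_{F_{n}}(x)}\le e^{(\|f\|-s)b(|F_{n}|)}$ arbitrarily small for large $n$, so a finite disjoint family can be trimmed to land in any prescribed interval, which is what produces the tight inequalities $\mu_{p}(\{x\})<\sum_{y}e^{-s\,b(|F_{m_{p+1}(y)}|)+f_{F_{m_{p+1}(y)}}(y)}<(1+2^{-p-1})\mu_{p}(\{x\})$ and hence the bounded normalizing constant $C=\prod_{n}(1+2^{-n})$.
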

To prove Theorem \ref{T3}, we need the following lemma.
\begin{lem}\label{l3}
   Let $Z \subset X$, $\varepsilon>0$ and $s>\|f\|$. If $\mathbf{b}$ and $\left\{F_{n}\right\}$ satisfy Condition \ref{con1} and $M^{P}(s,\varepsilon,Z,\{F_{n}\},f,\mathbf{b})=\infty$, then for a given finite interval $(a,b)\in [0,+\infty)$ and $N \in \mathbb{N}$, there exists a finite disjoint collection $\{\overline{B}_{F_{n_{i}}}(x_{i},\varepsilon)\}$ such that $x_{i}\in Z$, $n_{i} \geq N$ and $\sum_{i}e^{-sb(|F_{n_{i}}|)+f_{F_{n_{i}}}(x_{i})}\in (a,b).$
\end{lem}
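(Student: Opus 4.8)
The plan is to combine the hypothesis $M^{P}(s,\varepsilon,Z,\{F_{n}\},f,\mathbf{b})=+\infty$ with a uniform upper bound on the individual weights $e^{-sb(|F_{n_{i}}|)+f_{F_{n_{i}}}(x_{i})}$, and then to run a discrete intermediate-value argument by deleting balls one at a time from a sufficiently large packing. First I would record that, since $M^{P}(N,s,\varepsilon,Z,\{F_{n}\},f,\mathbf{b})$ is non-increasing in $N$ and its limit equals $M^{P}(s,\varepsilon,Z,\{F_{n}\},f,\mathbf{b})=+\infty$, one must have $M^{P}(N,s,\varepsilon,Z,\{F_{n}\},f,\mathbf{b})=+\infty$ for \emph{every} $N\in\mathbb{N}$, because a non-increasing sequence whose limit is $+\infty$ is identically $+\infty$.

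Next I would establish the key estimate controlling the size of a single weight. For any admissible closed Bowen ball $\overline{B}_{F_{n_{i}}}(x_{i},\varepsilon)$ with $n_{i}\geq N'$, I use $f_{F_{n_{i}}}(x_{i})\leq |F_{n_{i}}|\,\|f\|$ together with Condition \ref{con1}, i.e. $b(|F_{n_{i}}|)\geq |F_{n_{i}}|$, and the standing hypothesis $s>\|f\|\geq 0$ to get
$$e^{-s b(|F_{n_{i}}|)+f_{F_{n_{i}}}(x_{i})}\leq e^{(\|f\|-s)|F_{n_{i}}|}\leq e^{(\|f\|-s)|F_{N'}|},$$
where the last inequality uses $|F_{n_{i}}|\geq|F_{N'}|$ and $\|f\|-s<0$. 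Since $\{F_{n}\}$ is strictly increasing, $|F_{N'}|\to\infty$, so the right-hand side tends to $0$ as $N'\to\infty$. I may therefore fix $N'\geq N$ so large that every single weight carried by a ball of index $\geq N'$ is strictly smaller than the interval length $b-a$.

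Finally, since $M^{P}(N',s,\varepsilon,Z,\{F_{n}\},f,\mathbf{b})=+\infty$, I would select a finite pairwise disjoint collection $\{\overline{B}_{F_{n_{i}}}(x_{i},\varepsilon)\}_{i=1}^{k}$ with $x_{i}\in Z$, $n_{i}\geq N'\geq N$, and total weight exceeding $b$; a countable collection with supremum sum $+\infty$ has a finite sub-collection with sum beyond $b$ because all weights are positive. Deleting these balls one at a time yields a strictly decreasing chain of partial sums $S_{0}>S_{1}>\cdots>S_{k}=0$ with $S_{0}>b$ and $S_{k}=0\leq a$, in which each single-step decrement is smaller than $b-a$. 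Letting $j$ be the first index with $S_{j}<b$ forces $j\geq 1$ and $S_{j-1}\geq b$, whence $S_{j}=S_{j-1}-(\text{removed weight})>b-(b-a)=a$, so $S_{j}\in(a,b)$; the sub-collection surviving after $j$ deletions is the desired finite disjoint family (nonempty, since $S_{j}>a\geq 0$).

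The main obstacle is the uniform smallness of the weights: it is precisely the interplay of Condition \ref{con1} with the hypothesis $s>\|f\|$ that drives each weight to $0$ as its index grows, and this is exactly what prevents the deletion process from stepping over the target window $(a,b)$. Everything else is the elementary discrete intermediate-value step once the decrements are known to be smaller than $b-a$.
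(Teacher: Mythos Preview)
Your proposal is correct and follows essentially the same approach as the paper: choose $N'\geq N$ large enough that each individual weight is less than $b-a$, extract a finite disjoint family with total weight exceeding $b$, and then delete balls one by one. The only cosmetic difference is that the paper bounds the weight by $e^{b(|F_{n_{i}}|)(\|f\|-s)}$ (using $f_{F_{n_{i}}}(x_{i})\leq b(|F_{n_{i}}|)\|f\|$ directly via Condition~\ref{con1}), whereas you pass through $e^{(\|f\|-s)|F_{n_{i}}|}$ first; both are valid and yield the same conclusion.
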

\begin{proof}
   Let $N_{1}>N$ be large enough such that $e^{b(| F_{N_{1}}|) (\|f\|-s)}<b-a$. Since $M^{P}(s,\varepsilon,Z,\{F_{n}\},f,\mathbf{b})=+\infty$, we have  $M^{P}(N_{1},s,\varepsilon,Z,\{F_{n}\},f,\mathbf{b})=+\infty$. Then there exists a finite disjoint collection $\{\overline{B}_{F_{n_{i}}}(x_{i},\varepsilon)\}$ such that $x_{i} \in Z$, $n_{i} \geq N_{1}$ and $\sum_{i}e^{-sb(|F_{n_{i}}|)+f_{F_{n_{i}}}(x_{i})}>b$. Since
   $$e^{-sb(|F_{n_{i}}|)+f_{F_{n_{i}}}(x_{i})}\leq e^{-sb(| F_{n_{i}}|)+b(| F_{n_{i}}|)\|f\|} \leq e^{b(| F_{n_{i}}|) (\|f\|-s)}<b-a,$$
   we can discard elements in this collection one by one until $\sum_{i}e^{-sb(|F_{n_{i}}|)+f_{F_{n_{i}}}(x_{i})} \in (a,b)$.
\end{proof}
We now prove Theorem \ref{T3} by using the approach of Feng and Huang in \cite{Fe}.

\textit{Proof of Theorem 4.1}.
   Using Proposition \ref{p4} and Proposition \ref{p5}, we have
   $$\begin{aligned}
      P^{P}(Z,\left\{F_{n}\right\},f,\mathbf{b})& \geq \sup \{P_{\mu }^{P}(\left \{ F_{n} \right \},f,\mathbf{b} ):\mu \in M(X), \ \mu(Z)=1 \}\\
      &\geq\sup \{P_{\mu }^{KP}(\left \{ F_{n} \right \},f ,\mathbf{b}):\mu \in M(X), \ \mu(Z)=1 \}\\
      &\geq\sup \{ \overline{P}_{\mu }(\left \{ F_{n} \right \},f ,\mathbf{b}):\mu \in M(X), \ \mu(Z)=1 \}.\\
   \end{aligned}$$
   We shall prove the inequality
   $$P^{P}(Z,\left\{F_{n}\right\},f,\mathbf{b})\leq \sup \{ \overline{P}_{\mu }(\left \{ F_{n} \right \},f ,\mathbf{b}):\mu \in M(X), \ \mu(Z)=1 \}.$$
   For any $s \in (\|f\|,P^{P}(Z,\{F_{n}\},f,\mathbf{b}))$, we take $\varepsilon$ small enough such that $s<P^{P}(\varepsilon,Z,\{F_{n}\},f,\mathbf{b})$. Fix $t \in (s,P^{P}(\varepsilon,Z,\{F_{n}\},f,\mathbf{b}))$. We will construct inductively the following four sequences:
   \begin{enumerate}[1)]
      \item[1)] a sequence of finite sets $\left\{K_{i} \right\}$ with $K_{i}\subset Z$;
      \item[2)] a sequence of finite measures $\left\{\mu_{i}\right\}$ with each $\mu_{i}$ being supported on $K_{i}$;
      \item[3)] a sequence of positive numbers $\left\{\gamma_{i}\right\}$;
      \item[4)] a sequence of integer-valued functions $\left\{m_{i}\right\}$ where $m_{i}:K_{i}\to \mathbb{N}$.
   \end{enumerate}
   The construction is divided into three steps:

   $\it Step \ 1.$ Construct $K_{1}$, $\mu_{1}$, $m_{1}(\cdot )$ and $\gamma_{1}$.

   Note that $M^{\mathcal{P}}(t,\varepsilon,Z,\{F_{n}\},f,\mathbf{b})=+\infty$. Let
   $$H=\bigcup\left\{J \subset X:J \ \text{is open}, \ M^{\mathcal{P}}(t,\varepsilon,Z \cap J,\{F_{n}\},f,\mathbf{b})=0\right\}.$$
   Then by the separability of $X$, $H$ is a countable union of the open set $U$. It implies that $M^{\mathcal{P}}(t,\varepsilon,Z \cap H,\{F_{n}\},f,\mathbf{b})=0$. Let $Z'=Z\setminus H=Z \cap (X\setminus H)$. We first show that for any open set $J \subset X$, either $Z' \cap J=\emptyset $ or $M^{\mathcal{P}}(t,\varepsilon,Z' \cap J,\{F_{n}\},f,\mathbf{b})=0$.

   Suppose that $M^{\mathcal{P}}(t,\varepsilon,Z' \cap J,\{F_{n}\},f,\mathbf{b})=0$. Since $Z=Z' \cup (Z \cap H)$, we have
   $$M^{\mathcal{P}}(t,\varepsilon,Z \cap J,\{F_{n}\},f,\mathbf{b}) \leq M^{\mathcal{P}}(t,\varepsilon,Z' \cap J,\{F_{n}\},f,\mathbf{b})+M^{\mathcal{P}}(t,\varepsilon,Z \cap H,\{F_{n}\},f,\mathbf{b})=0.$$
   Thus $J \subset H$. It follows that $Z' \cap J=\emptyset$. Since
   $$M^{\mathcal{P}}(t,\varepsilon,Z,\{F_{n}\},f,\mathbf{b}) \leq M^{\mathcal{P}}(t,\varepsilon,Z \cap H,\{F_{n}\},f,\mathbf{b}) + M^{\mathcal{P}}(t,\varepsilon,Z',\{F_{n}\},f,\mathbf{b})$$
   and $M^{\mathcal{P}}(t,\varepsilon,Z \cap J,\{F_{n}\},f,\mathbf{b})=0$, it holds that
   $$M^{\mathcal{P}}(t,\varepsilon,Z',\{F_{n}\},f,\mathbf{b})= M^{\mathcal{P}}(t,\varepsilon,Z,\{F_{n}\},f,\mathbf{b})=+\infty.$$
   Thus $M^{\mathcal{P}}(s,\varepsilon,Z',\{F_{n}\},f,\mathbf{b})=+\infty$.

   Using Lemma \ref{l3}, we can find a finite set $K_{1} \subset Z'$ and an integer-valued function $m_{1}(x)$ on $K_{1}$ such that the collection $\{ \overline{B}_{F_{m_{1}(x)}}(x,\varepsilon) \}_{x \in K_{1}}$ is disjoint and
   $$\sum_{x \in K_{1}}e^{b(|F_{m_{1}(x)}|)s+f_{F_{m_{1}(x)}}(x)} \in (1,2).$$
   Define
   $$\mu_{1}=\sum_{x \in K_{1}}e^{b(|F_{m_{1}(x)}|)s+f_{F_{m_{1}(x)}}(x)} \delta_{x},$$
   where $\delta_{x}$ denotes the Dirac measure at $x$. Take $\gamma_{1}>0$ small enough such that for any $Z \in \overline{B}(x,\gamma_{1})$, we have
   \begin{equation}
      (\overline{B}(z,\gamma_{1}) \cup  \overline{B}_{F_{m_{1}(x)}}(z,\varepsilon)) \cap (\bigcup_{y \in K_{1} \setminus \{x\}}\overline{B}(y,\gamma_{1}) \cup  \overline{B}_{F_{m_{1}(y)}}(y,\varepsilon))=\emptyset.\label{eq:4.3}
   \end{equation}
   Since $K_{1} \subset Z'$, we have
   $$M^{\mathcal{P}}(t,\varepsilon,Z \cap B(x,\gamma_{1}/4),\{F_{n}\},f,\mathbf{b}) \geq M^{\mathcal{P}}(t,\varepsilon,Z' \cap B(x,\gamma_{1}/4),\{F_{n}\},f,\mathbf{b})>0,$$
   for any $x \in K_{1}$.

   $\it Step \ 2.$ Construct $K_{2}$, $\mu_{2}$, $m_{2}(\cdot )$ and $\gamma_{2}$.

   By \eqref{eq:4.3}, the elements of the family of balls $\{ \overline{B}(x,\gamma_{1})\}_{x \in K_{1}}$ are pairwise disjoint. For each $x \in K_{1}$, since $M^{\mathcal{P}}(t,\varepsilon,Z \cap B(x,\gamma_{1}/4),\{F_{n}\},f,\mathbf{b})>0$, we can construct a finite set $E_{2}(x) \subset Z \cap B(x,\gamma_{1}/4)$ and integer-valued function
   $$m_{2}(x):E_{2}(x) \to \mathbb{N} \cap [\text{max}\{m_{1}(y):y \in K_{1}\},+\infty)$$
   such that
   \begin{enumerate}
   \item[(a)] $M^{\mathcal{P}}(t,\varepsilon,Z \cap J,\{F_{n}\},f,\mathbf{b})>0$ for each open set $J$ with $J \cap E_{2}(x)\neq \emptyset$;
   \item[(b)] The elements in $\left \{ B_{F_{m_{2}(y)}}(y,\varepsilon ) \right \}_{y \in E_{2}(x)} $ are disjoint and
      $$ \mu_{1}(\{x\})<\sum_{y \in E_{2}(x)}e^{-b(|F_{m_{2}(y)}|)s+f_{F_{m_{2}(y)}}(y)}<(1+2^{-2})\mu_{1}(\{x\}). $$
   \end{enumerate}
   To see it, we fix $x \in K_{1}$ and denote $F=Z \cap B(x,\gamma_{1}/4)$. Let
   $$H_{x}=\bigcup \left\{ J \subset X :J \ \text{is open},\ M^{\mathcal{P}}(t,\varepsilon,F \cap J,\{F_{n}\},f,\mathbf{b})=0 \right\}.$$
   Set $F'=F \setminus H_{x}$. Then by Step 1, we can show that
   $$M^{\mathcal{P}}(t,\varepsilon,F',\{F_{n}\},f,\mathbf{b})=M^{\mathcal{P}}(t,\varepsilon,F,\{F_{n}\},f,\mathbf{b})>0$$
   and $M^{\mathcal{P}}(t,\varepsilon,F' \cap J,\{F_{n}\},f,\mathbf{b})>0$ for any open set $J$ with $J \cap F' \neq \emptyset$. Since $s<t$, we have
   $M^{\mathcal{P}}(s,\varepsilon,F',\{F_{n}\},f,\mathbf{b})=+\infty$.

   Using Lemma \ref{l3} again, we can find a finite set $E_{2} \subset F'$ and a map
   $$m_{2}(x):E_{2}(x) \to \mathbb{N} \cap [\text{max}\{m_{1}(y):y \in K_{1}\},+\infty)$$
   so that (b) holds. If $J \cap E_{2}(x) \neq \emptyset$ and $J$ is an open set, then $J \cap F' \neq \emptyset$. Hence
   $$M^{\mathcal{P}}(t,\varepsilon,Z \cap J,\{F_{n}\},f,\mathbf{b}) \geq M^{\mathcal{P}}(t,\varepsilon,F'\cap J,\{F_{n}\},f,\mathbf{b})>0.$$
   Thus (a) holds.
   Since the elements of the family $\{ \overline{B}(x,\gamma_{1})\}_{x \in K_{1}}$ are pairwise disjoint, $E_{2}(x) \cap E_{2}(y)=\emptyset$ for different points $x,y \in K_{1}$. Define $K_{2}=\bigcup_{x \in K_{1}}E_{2}(x)$ and
   $$\mu_{2}=\sum_{x \in K_{2}}e^{b(|F_{m_{2}(x)}|)s+f_{F_{m_{2}(x)}}(x)} \delta_{x}.$$
   By \eqref{eq:4.3} and (b), the elements in $\{ \overline{B}_{F_{m_{2}(x)}}(x,\varepsilon) \}_{x \in K_{2}}$ are pairwise disjoint. Hence we can take $\gamma_{2} \in (0,\gamma_{1}/4)$ small enough such that for any  $x \in K_{2}$ and the function $z:K_{2} \to X$ with $d(x,z(x))\leq \gamma_{2}$, we have
   $$(\overline{B}(z(x),\gamma_{2}) \cup  \overline{B}_{F_{m_{2}(x)}}(z(x),\varepsilon)) \cap (\bigcup_{y \in K_{2} \setminus \{x\}}\overline{B}(z(y),\gamma_{2}) \cup  \overline{B}_{F_{m_{2}(y)}}(z(y),\varepsilon))=\emptyset.$$
   Since $x \in K_{2}$, there exists $y \in K_{1}$ such that $x \in E_{2}(y)$. By (a), we have
   $$M^{\mathcal{P}}(t,\varepsilon,Z \cap B(x,\gamma_{2}/4),\{F_{n}\},f,\mathbf{b})>0,$$
   for each $x \in K_{2}$.

   $\it Step \ 3.$ Assume that $K_{i}$, $\mu_{i}$, $m_{i}(\cdot )$ and $\gamma_{i}$ have been constructed, for $i=1,2,\cdots,p$. In particular, suppose that for any $x \in K_{p}$ and the function $z:K_{p} \to X$ with $d(x,z(x))<\gamma_{p}$,
   \begin{equation}
      (\overline{B}(z(x),\gamma_{p}) \cup  \overline{B}_{F_{m_{p}(x)}}(z(x),\varepsilon)) \cap (\bigcup_{y \in K_{p} \setminus \{x\}}\overline{B}(z(y),\gamma_{p}) \cup  \overline{B}_{F_{m_{p}(y)}}(z(y),\varepsilon))=\emptyset \label{eq:4.4}
   \end{equation}
   and $M^{\mathcal{P}}(t,\varepsilon,Z \cap B(x,\gamma_{p}/4),\{F_{n}\},f,\mathbf{b})>0$. We shall construct $K_{p+1}$, $\mu_{p+1}$, $m_{p+1}(\cdot )$ and $\gamma_{p+1}$ in a way being similar to Step 2.

   Note that the elements in $\{ \overline{B}(x,\gamma_{p}) \}_{x \in K_{p}}$ are pairwise disjoint. Since
   $$M^{\mathcal{P}}(t,\varepsilon,Z \cap B(x,\gamma_{p}/4),\{F_{n}\},f,\mathbf{b})>0,$$
    for each $x \in K_{p}$,  we can construct as in Step 2, a finite set
   $$E_{p+1}(x) \subset Z \cap B(x,\gamma_{p}/4)$$
   and an integer-valued function
   $$m_{p+1}(x):E_{p+1}(x) \to \mathbb{N} \cap [\text{max}\{m_{p}(y):y \in K_{p}\},+\infty),$$
   such that
   \begin{itemize}
   \item[(c)] $M^{\mathcal{P}}(t,\varepsilon,Z \cap J,\{F_{n}\},f)>0$, for each open set $J$ with $J \cap E_{p+1}(x)\neq \emptyset$.
   \item[(d)] The elements in $\left \{ B_{F_{m_{p+1}(y)}}(y,\varepsilon ) \right \}_{y \in E_{p+1}(x)} $ are disjoint, and
      $$ \mu_{p}(\{x\})<\sum_{y \in E_{p+1}(x)}e^{-b(|F_{m_{p+1}(y)}|)s+f_{F_{m_{p+1}(y)}}(y)}<(1+2^{-p-1})\mu_{p}(\{x\}). $$
   \end{itemize}
   It is easy to see that $E_{p+1}(x) \cap E_{p+1}(y)=\emptyset$ for any $x,y \in K_{p}$ with $x \neq y$. Define
   $$K_{p+1}=\bigcup_{x \in K_{p}}E_{p+1}(x)$$ and
   $$\mu_{p+1}=\sum_{x \in K_{p+1}}e^{b(|F_{m_{p+1}(x)}|)s+f_{F_{m_{p+1}(x)}}(x)} \delta_{x}.$$
   By \eqref{eq:4.4} and (d), the elements in $\{ \overline{B}_{F_{m_{p+1}(x)}}(x,\varepsilon) \}_{x \in K_{p+1}}$ are pairwise disjoint. Hence we can take $\gamma_{p+1} \in (0,\gamma_{p}/4)$ small enough such that for any  $x \in K_{p+1}$ and the function $z:K_{p+1} \to X$ with $d(x,z(x))\leq \gamma_{p+1}$, we have
   $$(\overline{B}(z(x),\gamma_{p+1}) \cup  \overline{B}_{F_{m_{p+1}(x)}}(z(x),\varepsilon)) \cap (\bigcup_{y \in K_{p+1} \setminus \{x\}}\overline{B}(z(y),\gamma_{p+1}) \cup  \overline{B}_{F_{m_{p+1}(y)}}(z(y),\varepsilon))=\emptyset.$$
   Since $x \in K_{p+1}$, there exists $y \in K_{p}$ such that $x \in E_{p+1}(y)$. Thus by (c),
   $$M^{\mathcal{P}}(t,\varepsilon,Z \cap B(x,\gamma_{p+1}/4),\{F_{n}\},f,\mathbf{b})>0,$$
   for each $x \in K_{p+1}$.

   We summarize their properties as follow:
   \begin{itemize}
   \item[(e)] For each $i$, the family $\mathcal{F}_{i}=\{\overline{B}(x,\gamma_{i}):x \in K_{i}\}$ is disjoint. For every $B \in \mathcal{F}_{i+1}$, there exists $x \in K_{i}$ such that $B \subset \overline{B}(x,\gamma_{i}/2);$
   \item[(f)] For each $x \in K_{i}$ and $x' \in \overline{B}(x,\gamma_{i})$,
      \begin{equation}
         \overline{B}_{F_{m_{i}(x)}}(x',\varepsilon)\cap \bigcup_{y \in K_{i} \setminus \{x\}}\overline{B}(y,\gamma_{i})= \emptyset \label{eq:4.5}
      \end{equation}
      and
      \begin{equation}
         \mu_{i}(\overline{B}(x,\gamma_{i}))=e^{-b(|F_{m_{i}(x)}|)s+f_{F_{m_{i}(x)}}(x)} \leq \sum_{y \in E_{i+1}(x)}e^{-b(|F_{m_{i+1}(y)}|)s+f_{F_{m_{i+1}(y)}}(y)} \leq (1+2^{-i-1})\mu_{i}(\overline{B}(x,\gamma_{i})), \label{eq:4.6}
      \end{equation}
      where $E_{i+1}(x)=\overline{B}(x,\gamma_{i})\cap K_{i+1}$.
   \end{itemize}
   By \eqref{eq:4.6}, for any $F_{i} \in \mathcal{F}_{i}$, we have
   $$\begin{aligned}
      \mu(F_{i}) \leq \mu_{i+1}(F_{i})&=\sum_{F \in \mathcal{F}_{i+1}:F \subset F_{i}}\mu_{i+1}(F)\\
      &\leq \sum_{F \in \mathcal{F}_{i+1}:F \subset F_{i}}(1+2^{-i-1})\mu_{i}(F)\\
      &=(1+2^{-i-1})\sum_{F \in \mathcal{F}_{i+1}:F \subset F_{i}}\mu_{i}(F)\\
      &\leq (1+2^{-i-1})\mu_{i}(F_{i}).
   \end{aligned}$$
   Using the above inequality repeatedly, we have for any $j>i$,
   \begin{equation}
      \mu_{i}(F_{i}) \leq \mu_{j}(F_{i}) \leq \prod_{n=i+1}^{j}(1+2^{-n})\mu_{i}(F_{i})\leq C\mu_{i}(F_{i}), \ \forall F_{i}\in \mathcal{F}_{i}, \label{eq:4.7}
   \end{equation}
   where $C=\prod_{n=i+1}^{j}(1+2^{-n})<+\infty$.

   Let $\hat{\mu}$ be a limit point of $\{\mu_{i}\}$ in a weak$^{*}$ topology. Let
   $$K^{*}=\bigcap_{n=1}^{\infty}\overline{\bigcup_{i\geq n}K_{i}}=\lim_{n \to +\infty}\overline{\bigcup_{i\geq n}K_{i}}.$$
   Then $\hat{\mu}$ is supported on $K^{*}$. $K^{*}\subset Z$ and for any $i \in \mathbb{N}$, $K^{*}\subset \bigcup_{x \in K_{i}}B(x,\gamma_{i})$. By \eqref{eq:4.7},
   $$\begin{aligned}
      e^{-b(| F_{m_{i}(x)}|)s+f_{F_{m_{i}(x)}}(x) }&=\mu_{i}(\overline{B}(x,\gamma_{i})) \leq \hat{\mu}(\overline{B}(x,\gamma_{i}))\\
      &\leq C\mu_{i}(\overline{B}(x,\gamma_{i})) =C e^{-b(| F_{m_{i}(x)}|)s+f_{F_{m_{i}(x)}}(x) }, \ \forall x \in K_{i}.
   \end{aligned}$$
   In particular,
   $$1 \leq \sum_{x \in K_{1}}\mu_{1}(B(x,\gamma_{1}))\leq \sum_{x \in K_{1}}\hat{\mu}(B(x,\gamma_{1}))=\hat{\mu}(K^{*})\leq \sum_{x \in K_{1}}C\mu_{1}(B(x,\gamma_{1}))\leq 2C.$$
   By \eqref{eq:4.5}, for every $x \in K_{i}$ and $x'\in \overline{B}(x,\gamma_{i})$,
   $$\hat{\mu} (\overline{B}_{F_{m_{i}(x)}}(x',\varepsilon)) \leq \hat{\mu}(\overline{B}(x,\gamma_{i})) \leq C e^{-b(| F_{m_{i}(x)}|)s+f_{F_{m_{i}(x)}}(x) }.$$
   For each $x' \in K^{*}$ and $i \in \mathbb{N}$, there exists $x\in K_{i}$ such that $x'\in \overline{B}(x,\gamma_{i})$. Thus
   $$\hat{\mu} (\overline{B}_{F_{m_{i}(x)}}(x',\varepsilon)) \leq C e^{-b(| F_{m_{i}(x)}|)s+f_{F_{m_{i}(x)}}(x) }.$$
   Let $\mu=\hat{\mu}/\hat{\mu}(K^{*})$. Then $\mu \in M(x)$ and $\mu(K^{*})=1$. Moreover, for all $x' \in K^{*}$, there exists a sequence $\{k_{i}\}_{i\geq 1}$ with $k_{i} \to +\infty$ such that
   $$\mu(B_{F_{k_{i}}}(x',\varepsilon)) \leq \frac{Ce^{-b(| F_{k_{i}}|)s+f_{F_{k_{i}}}(x') }}{\hat{\mu}(K^{*})}.$$
   It implies that
   $$\frac{-\log{\frac{\hat{\mu}(K^{*}) }{C} } -\log{\mu(B_{F_{k_{i}}}(x',\varepsilon ))}+f_{F_{k_{i}}}(x')}{b(| F_{k_{i}}|) } \geq s.$$
   Letting $k_{i} \to +\infty$, we get $\overline{P}_{\mu }(\left \{ F_{n} \right \},f,\mathbf{b} )>s$.

\begin{rem}
   Theorem \ref{T3} was proved by Dou et al. in \cite{Dou} where $f=0$ and $b(|F_{n}|)=|F_{n}|$ for all $n \in \mathbb{N}$. Later Ding et al. \cite{Ding} proved the theorem when $b(|F_{n}|)=|F_{n}| $ for all $n \in \mathbb{N}$.
\end{rem}

\section{Scaled packing pressures for the set of generic points} \label{sec5}%%%

 We know that there always exists a $T$-invariant Borel probability measure for a TDS with a $\mathbb{Z}$-action. This consequence is not necessarily true for all groups $G$ acting on $X$, but a known result says that there always exists a $G$-invariant Borel probability measure when a group $G$ is amenable. For more detail of amenable group actions, one can refer to \cite{Da,Do,Fe}.

For amenable group actions, the generic point is defined based on measure and the F{\o}lner sequence. Let $\left\{F_{n}\right\}$ be a F{\o}lner sequence and $\mu \in M(X,G)$. A point $x \in X$ is called a \textit{generic point} of $\mu$ (with respect to $\left\{F_{n}\right\}$) if
$$\lim_{n \to +\infty} \frac{1}{\left | F_{n}  \right | } \sum_{g\in F_{n}}f(gx)=\int_{X}f(x) \ \mathrm{d}\mu, \ \forall f \in C(X,\mathbb{R}) .$$
The set of generic points of $\mu$ (with respect to $\left\{F_{n}\right\}$) is denoted by $X_{\mu}$.

If $\mu \in E(X,G)$ and $\{F_{n}\}$ is a tempered F{\o}lner sequence, then $\mu(X_{\mu})=1$. In fact, let $\{f_{i}\}$ be a countable dense subset of $C(X,\mathbb{R})$. We denote
$$X_{i}=\left\{x\in X:\lim_{n \to +\infty} \frac{1}{\left | F_{n}  \right | } \sum_{g\in F_{n}}f_{i}(gx)=\int_{X}f_{i}(x) \ \mathrm{d}\mu\right\}.$$
By the pointwise ergodic theorem, $\mu(X_{i})=1$. Hence $X_{\mu}=\bigcap_{i=1}^{\infty}X_{i}$ has a full measure.

In this section, our main result is the following theorem.
\begin{thm}\label{T4}
    Let $(X,G)$ be a TDS with a $G$-action and a F{\o}lner  sequence $\left\{F_{n}\right\}$ satisfying Condition \ref{con1}. Suppose that $f \in C(X,\mathbb{R})$, $\mu \in E(X,G)$, $\left\{F_{n}\right\}$ is a tempered F{\o}lner sequence in $G$ and
    \begin{equation} \label{q3}
    \lim\limits_{n \to \infty}\frac{|F_{n}|}{b(|F_{n}|)}=1,
    \end{equation} then we have
\begin{equation}
P^{P}(X_{\mu},\left\{F_{n}\right\},f,\mathbf{b})=h_{\mu}(X)+\int_{X} f \mathrm{d}\mu , \label{eq:5.1}
\end{equation}
where $h_{\mu}(X)$ denotes the measure-theoretic entropy.
\end{thm}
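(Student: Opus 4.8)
Write $s_0 := h_\mu(X) + \int_X f \, \mathrm{d}\mu$. The plan is to prove the two inequalities $P^{P}(X_\mu,\{F_n\},f,\mathbf{b}) \ge s_0$ and $P^{P}(X_\mu,\{F_n\},f,\mathbf{b}) \le s_0$ separately, with the Billingsley-type Theorem \ref{T2} as the main engine, reducing everything to the behaviour of $\overline{P}_{\mu}(x,\{F_n\},f,\mathbf{b})$ along $X_\mu$. The first observation is that the scaling hypothesis \eqref{q3} trivializes the potential term: for every generic point $x \in X_\mu$ one has $\frac{f_{F_n}(x)}{|F_n|} \to \int_X f \, \mathrm{d}\mu$ by definition of $X_\mu$, and since $b(|F_n|) \ge |F_n|$ with $|F_n|/b(|F_n|) \to 1$, it follows that $\frac{f_{F_n}(x)}{b(|F_n|)} \to \int_X f \, \mathrm{d}\mu$ and, more generally, that $\frac{-\log\mu(B_{F_n}(x,\varepsilon))}{b(|F_n|)}$ has the same upper and lower limits as $\frac{-\log\mu(B_{F_n}(x,\varepsilon))}{|F_n|}$. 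Hence at every $x \in X_\mu$ we have $\overline{P}_{\mu}(x,\{F_n\},f,\mathbf{b}) = \int_X f\,\mathrm{d}\mu + \lim_{\varepsilon\to0}\limsup_n \frac{-\log\mu(B_{F_n}(x,\varepsilon))}{|F_n|}$, so the whole statement reduces to identifying the upper local entropy term with $h_\mu(X)$.

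For the lower bound I would appeal to the Shannon--McMillan--Breiman theorem for tempered F{\o}lner sequences together with the Brin--Katok local entropy formula for amenable actions: these provide a Borel set $Y$ with $\mu(Y)=1$ on which $\lim_{\varepsilon\to0}\limsup_n \frac{-\log\mu(B_{F_n}(x,\varepsilon))}{|F_n|} = h_\mu(X)$. Since $\mu$ is ergodic and $\{F_n\}$ is tempered we already know $\mu(X_\mu)=1$, so $Z_0 := Y \cap X_\mu$ satisfies $\mu(Z_0)=1>0$ and, by the reduction above, $\overline{P}_{\mu}(x,\{F_n\},f,\mathbf{b}) = s_0$ for all $x \in Z_0$. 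Theorem \ref{T2}(2) then yields $P^{P}(Z_0,\{F_n\},f,\mathbf{b}) \ge s_0$, and the monotonicity in Proposition \ref{p2}(\ref{p2.2}), via $Z_0 \subseteq X_\mu$, upgrades this to $P^{P}(X_\mu,\{F_n\},f,\mathbf{b}) \ge s_0$.

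For the upper bound I would apply Theorem \ref{T2}(1), which requires the pointwise estimate $\overline{P}_{\mu}(x,\{F_n\},f,\mathbf{b}) \le s_0$ for every $x \in X_\mu$; by the reduction this is the inequality $\lim_{\varepsilon\to0}\limsup_n \frac{-\log\mu(B_{F_n}(x,\varepsilon))}{|F_n|} \le h_\mu(X)$ at each generic point. The natural route is, for fixed small $\gamma>0$, to choose a finite Borel partition $\xi$ with $\mu(\partial\xi)=0$ and atoms of diameter less than $\varepsilon$; then the dynamical refinement $\xi_{F_n}(x)=\bigcap_{g\in F_n} g^{-1}\xi(gx)$ lies inside $\overline{B}_{F_n}(x,\varepsilon)$, so $-\log\mu(B_{F_n}(x,\varepsilon)) \le -\log\mu(\xi_{F_n}(x))$, and one controls $\frac{1}{|F_n|}\bigl(-\log\mu(\xi_{F_n}(x))\bigr)$ by $h_\mu(\xi) \le h_\mu(X)$ through the amenable SMB theorem and the convergence $\frac{1}{|F_n|}H_\mu(\xi_{F_n}) \to h_\mu(\xi)$. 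Granting this bound on $X_\mu$, Theorem \ref{T2}(1) gives $P^{P}(X_\mu,\{F_n\},f,\mathbf{b}) \le s_0$, and the two inequalities combine to \eqref{eq:5.1}.

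The hard part is precisely this upper estimate. The difficulty is that the SMB/Brin--Katok control of $-\log\mu(\xi_{F_n}(x))/|F_n|$ is a priori valid only $\mu$-almost everywhere, whereas Theorem \ref{T2}(1) demands the local inequality at \emph{every} point of $X_\mu$, and a $\mu$-null set of generic points can in principle carry positive packing pressure. I expect the resolution to run not through a single pointwise application but through a uniform exhaustion: decompose $X_\mu = \bigcup_N X_\mu^N$ into the pieces on which both the SMB lower bound $\mu(\overline{B}_{F_n}(x,\varepsilon)) \ge e^{-|F_n|(h_\mu(X)+\gamma)}$ and the genericity convergence $f_{F_n}(x)/|F_n| \to \int_X f\,\mathrm{d}\mu$ hold uniformly for all $n \ge N$, and then estimate the packing pre-measure directly. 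For $s>s_0$ and a disjoint family $\{\overline{B}_{F_{n_i}}(x_i,\varepsilon)\}_i$ with $x_i \in X_\mu^N$ and $n_i \ge L \ge N$, summing gives $\sum_i e^{-s\,b(|F_{n_i}|)+f_{F_{n_i}}(x_i)} \le e^{\,b(|F_L|)(s_0-s+3\gamma)}\sum_i \mu(\overline{B}_{F_{n_i}}(x_i,\varepsilon)) \le e^{\,b(|F_L|)(s_0-s+3\gamma)}$, using disjointness and $\sum_i \mu(\overline{B}_{F_{n_i}}(x_i,\varepsilon)) \le 1$; as $b(|F_L|)\to\infty$ and the exponent is negative for $\gamma$ small, this forces $M^{P}(s,\varepsilon,X_\mu^N,\{F_n\},f,\mathbf{b})=0$, hence $M^{\mathcal{P}}(s,\varepsilon,X_\mu,\{F_n\},f,\mathbf{b})=0$ by the countable subadditivity of Proposition \ref{p2}, so $P^{P}(X_\mu,\{F_n\},f,\mathbf{b})\le s$. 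Pinning down this decomposition so that it genuinely exhausts $X_\mu$ --- that is, reconciling the almost-everywhere nature of the SMB bound with the pointwise demands of the packing construction --- is the step requiring the most care.
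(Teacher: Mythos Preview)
Your lower bound is sound and close in spirit to the paper's: you invoke Theorem~\ref{T2}(2) on a full-measure subset of $X_\mu$ where Brin--Katok identifies the upper local entropy with $h_\mu(X)$, whereas the paper routes through the Bowen pressure via Lemma~\ref{l6} (bounding $P^B(X_\mu)$ below by $\underline{P}_\mu(X_\mu,\{F_n\},f,\mathbf{b})$) and then passes to $P^P$ by Lemma~\ref{l1}. Both arguments rest on the same Brin--Katok input and the reduction afforded by~\eqref{q3}.

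The upper bound has a genuine gap, and it is precisely the one you flag but do not close. Your exhaustion $X_\mu=\bigcup_N X_\mu^N$ builds in the SMB-type bound $\mu(\overline{B}_{F_n}(x,\varepsilon))\ge e^{-|F_n|(h_\mu(X)+\gamma)}$ for all $n\ge N$, but Brin--Katok furnishes this only $\mu$-almost everywhere, not at every generic point. Genericity of $x$ (weak-$*$ convergence of empirical measures) says nothing about the size of $\mu(B_{F_n}(x,\varepsilon))$, so the residual set $X_\mu\setminus\bigcup_N X_\mu^N$ can be nonempty, and a $\mu$-null subset of $X_\mu$ can carry arbitrarily large packing pressure. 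No refinement of the decomposition rescues this: the measure $\mu$ simply does not see that set.

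The paper's upper bound (Lemma~\ref{l5}) abandons local-entropy estimates for $\mu$ altogether and exploits only the defining property of $X_\mu$, which holds at \emph{every} generic point. For a neighbourhood $C\ni\mu$ in $M(X)$ one sets $X_\mu^m=\{x\in X_\mu:\xi_{F_n}(x)\in C\ \text{for all }n\ge m\}$, which genuinely exhausts $X_\mu$. Given a disjoint packing $\{\overline{B}_{F_{n_i}}(x_i,\varepsilon)\}$ with centres in $X_\mu^m$ and $n_i\ge N\ge m$, one stratifies by the value $n_i=k$ and observes that the corresponding centres form an $(F_k,\varepsilon)$-separated subset of $X_{F_k,C}$; a pigeonhole over $k$ then bounds the packing sum in terms of the weighted separated-set count $N_f(C;F_k,\varepsilon)$. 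The conclusion follows from Lemma~\ref{l4} (taken from~\cite{Zhang}), which gives $\lim_{\varepsilon\to0}\inf_C\limsup_n\frac{1}{|F_n|}\log N_f(C;F_n,\varepsilon)\le h_\mu(X)+\int_X f\,\mathrm{d}\mu$. This argument uses neither ergodicity nor~\eqref{q3}; those hypotheses enter only in the lower bound.
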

\begin{rem}
In 1973, Bowen \cite{Bowen} proved a variational principle of entropies on the set of generic points when $\mu$ is ergodic. Pesin and Pitskel \cite{Pe2} extended this result to topological pressure. Zhong and Chen \cite{Zh} proved equation \eqref{eq:5.1} where $G=\mathbb{Z}$ and $b(|F_{n}|)=|F_{n}|$ for all $n\in \mathbb{N}$. Ding et al. \cite{Ding} showed equation \eqref{eq:5.1} holds where $b(|F_{n}|)=|F_{n}|$ for all $n \in \mathbb{N}$.
\end{rem}
Let $F \in F(G)$ and $C \subset M(X)$. We denote
$$X_{F,C}=\{x \in X:\xi_{F}(x) \in C  \},$$
where $\xi_{F}(x)=\frac{1}{|F|}\sum_{g \in F}\delta_{gx}$ and $\delta_{x}$ is the Dirac measure at $x \in X$. A subset $E \subset X$ is said to be an \textit{$ (F,\varepsilon)$-separated} if for any $x\neq y \in E$, $d(hx,hy)>\varepsilon$ for any $ h \in F$.

To prove Theorem \ref{T4}, we need the following three lemmas.

\begin{lem} \label{l4} \cite[Lemma 4.2]{Zhang}
   Let $\{F_{n}\}$ be a F{\o}lner sequence and $\mu \in M(X,G)$. Suppose that $C \subset M(X)$ is a neighbourhood of $\mu$, $f \in C(X,\mathbb{R})$ and
   $$N_{f}(C;F_{n},\varepsilon):=\sup_{E}\sum_{x \in E}e^{f_{F_{n}}(x)},$$
   where the supremum is taken over all $(F_{n},\varepsilon)$-separated sets $E \subset X_{F_{n},C}$. Then
   $$\lim_{\varepsilon \to 0}\inf_{C}\limsup_{n \to +\infty}\frac{1}{|F_{n}|}\log{N_{f}(C;F_{n},\varepsilon)} \leq h_{\mu}(X)+\int_{X} f \mathrm{d}\mu.$$
\end{lem}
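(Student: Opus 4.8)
The plan is to bound the weighted separated-set count $N_f(C;F_n,\varepsilon)$ by passing to a finite Borel partition and reducing everything to the measure-theoretic entropy of that partition via a method-of-types estimate on F{\o}lner blocks. Fix $\eta>0$ and choose a finite Borel partition $\beta=\{B_1,\dots,B_k\}$ of $X$ with $\mu(\partial B_j)=0$ for every $j$ and $\max_j\operatorname{diam}(B_j)<\varepsilon$. Write $\beta_{F_n}:=\bigvee_{g\in F_n}g^{-1}\beta$. If $x,y$ lie in a common atom of $\beta_{F_n}$, then $gx,gy$ lie in the same atom of $\beta$ for each $g\in F_n$, so $d(gx,gy)<\varepsilon$ for all $g\in F_n$; hence any $(F_n,\varepsilon)$-separated set meets each atom at most once. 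Moreover, since $f\in C(X,\mathbb{R})$ the map $\nu\mapsto\int f\,\mathrm{d}\nu$ is weak$^{*}$-continuous, so I may shrink the neighbourhood $C$ of $\mu$ to guarantee $\bigl|\int f\,\mathrm{d}\nu-\int f\,\mathrm{d}\mu\bigr|<\eta$ for all $\nu\in C$. As $f_{F_n}(x)=|F_n|\int f\,\mathrm{d}\xi_{F_n}(x)$ and $\xi_{F_n}(x)\in C$ whenever $x\in X_{F_n,C}$, it follows that $f_{F_n}(x)\le|F_n|\bigl(\int f\,\mathrm{d}\mu+\eta\bigr)$ on $X_{F_n,C}$. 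Combining the two observations,
$$
N_f(C;F_n,\varepsilon)\;\le\;\mathcal{N}_n\cdot e^{|F_n|\left(\int f\,\mathrm{d}\mu+\eta\right)},
$$
where $\mathcal{N}_n$ is the number of atoms of $\beta_{F_n}$ that meet $X_{F_n,C}$.

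The heart of the matter is then the purely combinatorial estimate $\limsup_n\frac{1}{|F_n|}\log\mathcal{N}_n\le h_\mu(X)+\eta$. The idea I would use is to control the atoms of $\beta_{F_n}$ through empirical frequencies of finite blocks. Fix a further set $F\in F(G)$ and put $\beta_F:=\bigvee_{h\in F}h^{-1}\beta$; since $\mu(\partial\beta_F)=0$, I may shrink $C$ further so that $|\nu(A)-\mu(A)|$ is as small as desired, simultaneously for every atom $A$ of $\beta_F$ and every $\nu\in C$. For $x\in X_{F_n,C}$ the number $\xi_{F_n}(x)(A)$ is exactly the frequency with which the $F$-block pattern $A$ occurs along the $\beta$-name of $x$ over $F_n$ (i.e. the frequency of $g\in F_n$ with $gx\in A$), so the constraint $\xi_{F_n}(x)\in C$ pins these block frequencies to within $\eta$ of $(\mu(A))_A$. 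Covering $F_n$ by essentially disjoint translates of $F$ through the Ornstein--Weiss quasi-tiling machinery and counting the $\beta$-names compatible with the prescribed $F$-block type, one obtains
$$
\limsup_{n\to+\infty}\frac{1}{|F_n|}\log\mathcal{N}_n\;\le\;\frac{1}{|F|}H_\mu(\beta_F)+\rho(F,C),
$$
where $H_\mu$ denotes the Shannon entropy of a finite partition and the error $\rho(F,C)\to0$ as $C$ shrinks to $\{\mu\}$ and $F$ becomes more invariant.

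To conclude, recall that $\frac{1}{|F|}H_\mu(\beta_F)$ converges to the $\mu$-entropy $h_\mu(\beta)$ of the partition as $F$ becomes more and more invariant, and that $h_\mu(\beta)\le h_\mu(X)$ for every finite partition. Choosing $F$ invariant enough and $C$ small enough that the right-hand side above is at most $h_\mu(X)+2\eta$, the two displays combine to give $\limsup_n\frac{1}{|F_n|}\log N_f(C;F_n,\varepsilon)\le h_\mu(X)+\int f\,\mathrm{d}\mu+3\eta$ for this particular neighbourhood $C$. Hence $\inf_C\limsup_n\frac{1}{|F_n|}\log N_f(C;F_n,\varepsilon)\le h_\mu(X)+\int f\,\mathrm{d}\mu+3\eta$, and since $\beta$ may be taken with diameter below any prescribed $\varepsilon$, this bound does not deteriorate as $\varepsilon$ decreases. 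Letting first $\varepsilon\to0$ and then $\eta\to0$ yields the desired inequality.

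I expect the genuine obstacle to be the middle step: converting the weak$^{*}$ constraint $\xi_{F_n}(x)\in C$ into an exponential atom count governed by the \emph{dynamical} entropy $h_\mu$ rather than by the larger static entropy $H_\mu(\beta)$. A crude count using one-symbol frequencies only yields $H_\mu(\beta)$, so the refinement to $F$-blocks with $F$ large is essential, and this is precisely where the quasi-tiling of $F_n$ by translates of $F$ enters; one must also check that the overlapping block frequencies controlled by $\xi_{F_n}$ are compatible with the sparse tile centers used in the type count. A secondary point to handle with care is that the argument should rely only on the $G$-invariance of $\mu$ and not on ergodicity, so that the Shannon--McMillan--Breiman theorem is replaced throughout by the subadditive convergence $\frac{1}{|F|}H_\mu(\beta_F)\to h_\mu(\beta)$ together with the method of types.
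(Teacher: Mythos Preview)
The paper does not supply its own proof of this lemma: it is stated with the citation \cite[Lemma~4.2]{Zhang} and used as a black box in the proof of Lemma~\ref{l5}. There is therefore no in-paper argument to compare your proposal against.

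Your sketch follows what is in fact the standard route for this kind of estimate (and presumably close to Zhang's): pass from $(F_n,\varepsilon)$-separated sets to atoms of $\beta_{F_n}$ for a partition $\beta$ of diameter below $\varepsilon$ with $\mu$-null boundaries, linearise the potential term via the weak$^{*}$ constraint $\xi_{F_n}(x)\in C$, and reduce to counting $\beta$-names over $F_n$ whose $F$-block frequencies are pinned near $\mu$. The reduction to $\frac{1}{|F|}H_\mu(\beta_F)$ via Ornstein--Weiss quasi-tiling and the method of types, followed by $|F|\to\infty$ to reach $h_\mu(\beta)\le h_\mu(X)$, is exactly the right mechanism, and you have correctly flagged the only genuinely nontrivial step: reconciling the \emph{overlapping} block frequencies controlled by $\xi_{F_n}$ with the type count over a \emph{sparse} set of tile centres. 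Once that bookkeeping is made precise (in particular, quantifying the error from the uncovered and boundary portions of $F_n$ and from passing between dense and tiled frequencies), your outline becomes a complete proof. One minor point: the quantity $\inf_C\limsup_n\frac{1}{|F_n|}\log N_f(C;F_n,\varepsilon)$ is monotone nondecreasing as $\varepsilon\downarrow0$, so the outer limit exists; your bound is uniform in $\varepsilon$ since it involves only $h_\mu(X)$ and $\int f\,\mathrm{d}\mu$, which is why the argument closes.
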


The following result holds without the property of ergodic and the condition \eqref{q3}.

\begin{lem} \label{l5}
    Let $(X,G)$ be a TDS with a $G$-action and a F{\o}lner  sequence $\left\{F_{n}\right\}$ satisfying Condition \ref{con1}. If $f \in C(X,\mathbb{R})$ and $\mu \in M(X,G)$, then
   $$P^{P}(X_{\mu},\left\{F_{n}\right\},f,\mathbf{b}) \leq h_{\mu}(X)+\int_{X} f \mathrm{d}\mu. $$
\end{lem}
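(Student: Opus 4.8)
The plan is to estimate the packing set function $M^{P}$ directly on a decomposition of $X_{\mu}$, feeding in the separated--set bound of Lemma~\ref{l4}; genericity enters only through the empirical measures $\xi_{F_{n}}(x)$, and no pointwise estimate on $\mu(B_{F_{n}}(x,\varepsilon))$ is needed. Fix $s>h_{\mu}(X)+\int_{X}f\,\mathrm{d}\mu$ and choose $\eta>0$ with $h_{\mu}(X)+\int_{X}f\,\mathrm{d}\mu+\eta<s$. It suffices to show $P^{P}(\varepsilon,X_{\mu},\{F_{n}\},f,\mathbf{b})\le s$ for every $\varepsilon>0$: letting $\varepsilon\to0$ and then $s\downarrow h_{\mu}(X)+\int_{X}f\,\mathrm{d}\mu$ yields the claim. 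Fix $\varepsilon>0$. Since $N_{f}(C;F_{n},\varepsilon)$ increases as $\varepsilon$ decreases, the quantity $\inf_{C}\limsup_{n}\frac{1}{|F_{n}|}\log N_{f}(C;F_{n},\varepsilon)$ is monotone in $\varepsilon$, so Lemma~\ref{l4} forces it to be $\le h_{\mu}(X)+\int_{X}f\,\mathrm{d}\mu$ already at this $\varepsilon$. Hence we may fix an open neighbourhood $C$ of $\mu$ in $M(X)$ with
$$\limsup_{n\to\infty}\frac{1}{|F_{n}|}\log N_{f}(C;F_{n},\varepsilon)<h_{\mu}(X)+\int_{X}f\,\mathrm{d}\mu+\eta ,$$
so that $N_{f}(C;F_{n},\varepsilon)\le e^{(h_{\mu}(X)+\int_{X}f\,\mathrm{d}\mu+\eta)|F_{n}|}$ for all large $n$.

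For $N\in\mathbb{N}$ put $X_{\mu,N}=\{x\in X_{\mu}:\xi_{F_{n}}(x)\in C \text{ for all } n\ge N\}$. As $\xi_{F_{n}}(x)\to\mu\in C$ for each $x\in X_{\mu}$, we have $X_{\mu}=\bigcup_{N\ge1}X_{\mu,N}$. By the subadditivity in Proposition~\ref{p2} (3-b),
$$M^{\mathcal{P}}(s,\varepsilon,X_{\mu},\{F_{n}\},f,\mathbf{b})\le\sum_{N\ge1}M^{\mathcal{P}}(s,\varepsilon,X_{\mu,N},\{F_{n}\},f,\mathbf{b})\le\sum_{N\ge1}M^{P}(s,\varepsilon,X_{\mu,N},\{F_{n}\},f,\mathbf{b}),$$
so it is enough to prove $M^{P}(s,\varepsilon,X_{\mu,N},\{F_{n}\},f,\mathbf{b})=0$ for each fixed $N$.

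Fix such an $N$ and take $L\ge N$ so large that the bound on $N_{f}$ above holds for every $n\ge L$. Let $\{\overline{B}_{F_{n_{i}}}(x_{i},\varepsilon)\}_{i}$ be an arbitrary pairwise disjoint family with $x_{i}\in X_{\mu,N}$ and $n_{i}\ge L$, and group its centres by their scale, $E_{n}=\{x_{i}:n_{i}=n\}$. Disjointness of the closed Bowen balls gives $d_{F_{n}}(x,y)>2\varepsilon$ for distinct $x,y\in E_{n}$, so $E_{n}$ is $(F_{n},\varepsilon)$-separated; and $n\ge N$ gives $\xi_{F_{n}}(x)\in C$, i.e. $E_{n}\subset X_{F_{n},C}$. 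Consequently $\sum_{x\in E_{n}}e^{f_{F_{n}}(x)}\le N_{f}(C;F_{n},\varepsilon)$, whence
$$\sum_{i}e^{-s\,b(|F_{n_{i}}|)+f_{F_{n_{i}}}(x_{i})}\le\sum_{n\ge L}e^{-s\,b(|F_{n}|)}N_{f}(C;F_{n},\varepsilon)\le\sum_{n\ge L}e^{-s\,b(|F_{n}|)+(h_{\mu}(X)+\int_{X}f\,\mathrm{d}\mu+\eta)|F_{n}|}.$$
Because $b(|F_{n}|)\ge|F_{n}|\to\infty$ by Condition~\ref{con1} and $h_{\mu}(X)+\int_{X}f\,\mathrm{d}\mu+\eta<s$, the tail on the right tends to $0$ as $L\to\infty$. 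Taking the supremum over all such families and letting $L\to\infty$ gives $M^{P}(s,\varepsilon,X_{\mu,N},\{F_{n}\},f,\mathbf{b})=0$; by the chain above $M^{\mathcal{P}}(s,\varepsilon,X_{\mu},\{F_{n}\},f,\mathbf{b})=0$, and hence $P^{P}(\varepsilon,X_{\mu},\{F_{n}\},f,\mathbf{b})\le s$, as required.

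The delicate point is the last step: a single admissible packing carries balls of many different scales $n_{i}$, whereas Lemma~\ref{l4} controls only the fixed-scale separated sums $N_{f}(C;F_{n},\varepsilon)$. Reorganising the packing scale by scale is what makes the two compatible, and it is precisely to apply this reorganisation that one first fixes the neighbourhood $C$ through the $\lim_{\varepsilon\to0}\inf_{C}$ form of Lemma~\ref{l4} and then splits $X_{\mu}$ into the pieces $X_{\mu,N}$, on which the empirical measures lie in $C$ from time $N$ onward. Condition~\ref{con1} does the final bookkeeping, converting the scale-indexed sum into a convergent series.
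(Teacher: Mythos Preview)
Your argument is correct and follows the same route as the paper's: split $X_{\mu}$ into the sets $X_{\mu,N}$ on which the empirical measures lie in a fixed neighbourhood $C$ of $\mu$ from time $N$ on, group any admissible packing by its scale $n$ to obtain $(F_{n},\varepsilon)$-separated subsets of $X_{F_{n},C}$, and feed in the bound on $N_{f}(C;F_{n},\varepsilon)$ from Lemma~\ref{l4} together with Condition~\ref{con1}. The paper runs the contrapositive (starting from $\alpha<\beta<P^{P}(X_{\mu}^{m},\ldots)$ and deducing $\alpha\le h_{\mu}+\int f\,\mathrm{d}\mu$) whereas you argue directly that $M^{P}(s,\varepsilon,X_{\mu,N},\ldots)=0$, but the content is identical; one cosmetic slip is that disjointness of the closed Bowen balls yields only $d_{F_{n}}(x,y)>\varepsilon$, not $>2\varepsilon$, though that is exactly what $(F_{n},\varepsilon)$-separation requires.
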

\begin{proof}
   For any neighbourhood $C\subset M(X)$ of $\mu$ and $m \in \mathbb{N}$, let
$$X_{\mu}^{m}=\{x\in X_{\mu}: \xi_{F_{n}}(x)\in C,\forall n\geq m\}.$$
Then $X_{\mu}=\bigcup_{m=1}^{\infty}X_{\mu}^{m}$ and $X_{\mu}^{m} \subset X_{F_{n},C}$ for all $n \geq m$. Fix $m\in \mathbb{N}$. Let $\alpha<\beta<P^{P}(X_{\mu}^{m},\{F_{n}\},f,\mathbf{b})$. Then there exists $\varepsilon'>0$ such that $P^{P}(\varepsilon,X_{\mu}^{m},\{F_{n}\},f,\mathbf{b})>\beta$ for all $\varepsilon \in (0,\varepsilon')$. It follows that
$$M^{P}(\beta,\varepsilon,X_{\mu}^{m},\{F_{n}\},f,\mathbf{b})\geq M^{\mathcal{P}}(\beta,\varepsilon,X_{\mu}^{m},\{F_{n}\},f,\mathbf{b})=+\infty.$$
Thus
$$M^{P}(N,\beta,\varepsilon,X_{\mu}^{m},\{F_{n}\},f,\mathbf{b})=+\infty, \ \forall N \in \mathbb{N}.$$
Since $\mathbf{b}$ and $\left\{F_{n}\right\}$ satisfy Condition \ref{con1}, $\sum_{n=1}^{\infty}e^{b(|F_{n}|)(\alpha-\beta)}$ converges. Let $\sum_{n=1}^{\infty}e^{b(|F_{n}|)(\alpha-\beta)}=M$. Then for a given number $N\geq m$, we can find a finite or countable pairwise disjoint family $\left\{\overline{B}_{F_{n_{i}}} (x_{i},\varepsilon )\right\}_{i}$ such that $x_{i}\in X_{\mu}^{m}$, $n_{i}\geq N$ and
$$\sum_{i}e^{-\beta b(|F_{n_{i}}|)+f_{F_{n_{i}}}(x_{i})}>M.$$
For each $k\geq N$, let
$$X_{\mu}^{m,k}=\{x_{i}\in X_{\mu}^{m}:n_{i}=k\}.$$
Then
$$\sum_{k=N}^{\infty}(e^{-\beta b(|F_{k}|)}\sum_{x\in X_{\mu}^{m,k}}e^{f_{F_{k}}(x)})=\sum_{i}e^{-\beta b(|F_{n_{i}}|)+f_{F_{n_{i}}}(x_{i})}>M.$$
It is not hard to see that there exists $k\geq N$ with
$$\sum_{x\in X_{\mu}^{m,k}}e^{f_{F_{k}}(x)} \geq e^{b(|F_{k}|)\alpha}(1-e^{\alpha-\beta}).$$
Since $X_{\mu}^{m,k}$ is a ($F_{k},\varepsilon$)-separated set of $X_{F_{k},C}$, we have
$$N_{f}(C;F_{k},\varepsilon)\geq e^{b(|F_{k}|)\alpha}(1-e^{\alpha-\beta}).$$
Thus
$$\limsup_{n \to +\infty}\frac{1}{|F_{n}|}\log{N_{f}(C;F_{n},\varepsilon)} \geq \alpha.$$
Since $\varepsilon \in (0,\varepsilon')$ and $C \ni \mu$ are arbitrary, we have
$$\lim_{\varepsilon \to 0}\inf_{C}\limsup_{n \to +\infty}\frac{1}{|F_{n}|}\log{N_{f}(C;F_{n},\varepsilon)}\geq \alpha.$$
Using Lemma \ref{l4}, we have
$$h_{\mu}(X)+\int_{X} f \mathrm{d}\mu \geq \alpha.$$
Thus
$$h_{\mu}(X)+\int_{X} f \mathrm{d}\mu \geq P^{P}(X_{\mu}^{m},\{F_{n}\},f,\mathbf{b}).$$
By Proposition \ref{p2}, It follows that
$$h_{\mu}(X)+\int_{X} f \mathrm{d}\mu \geq P^{P}(X_{\mu},\{F_{n}\},f,\mathbf{b})=\sup_{m}P^{P}(X_{\mu}^{m},\{F_{n}\},f,\mathbf{b}).$$
\end{proof}
\begin{lem} \label{l6}
   Let $(X,G)$ be a TDS with a $G$-action and a F{\o}lner  sequence $\left\{F_{n}\right\}$. Then for any $f\in C(X)$ and any nonempty compact subset $K\subseteq X$,
   $$P^{B}(K,\{F_{n}\},f,\mathbf{b}) \geq \sup \left\{ \underline{P}_{\mu }(K,\left \{ F_{n} \right \},f,\mathbf{b} ):\mu \in M(X), \ \mu(K)=1 \right\}.$$
\end{lem}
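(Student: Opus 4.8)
The plan is to prove a mass distribution (Frostman-type) principle for the scaled Bowen pressure and then combine it with the monotonicity of $P^{B}$ from Proposition \ref{p2}(2). Fix $\mu\in M(X)$ with $\mu(K)=1$; it suffices to show $P^{B}(K,\{F_{n}\},f,\mathbf{b})\ge\int_{K}\underline{P}_{\mu}(x,\{F_{n}\},f,\mathbf{b})\,\mathrm{d}\mu(x)$ and then take the supremum over all such $\mu$. First I would observe that for any real number $s<\int_{K}\underline{P}_{\mu}\,\mathrm{d}\mu$ the set $A=\{x\in K:\underline{P}_{\mu}(x,\{F_{n}\},f,\mathbf{b})>s\}$ has $\mu(A)>0$, since otherwise $\underline{P}_{\mu}\le s$ would hold $\mu$-almost everywhere and the integral would not exceed $s$. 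As $A\subseteq K$ and $P^{B}$ is monotone, the proof reduces to showing $P^{B}(A,\{F_{n}\},f,\mathbf{b})\ge s$; letting $s\uparrow\int_{K}\underline{P}_{\mu}\,\mathrm{d}\mu$ and then varying $\mu$ gives the claim.

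For the mass distribution principle on $A$, recall that $\underline{P}_{\mu}(x,\cdot)$ is an increasing limit in $\varepsilon$, so for each $x\in A$ and every sufficiently small $\varepsilon>0$ one has $\liminf_{n}\bigl(-\log\mu(B_{F_{n}}(x,2\varepsilon))+f_{F_{n}}(x)\bigr)/b(|F_{n}|)>s$. Fixing such an $\varepsilon$, I would decompose $A=\bigcup_{N\ge1}A_{N}$, where $A_{N}$ is the set of $x\in A$ satisfying $\mu(B_{F_{n}}(x,2\varepsilon))\le e^{-s\,b(|F_{n}|)+f_{F_{n}}(x)}$ for all $n\ge N$. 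The sets $A_{N}$ increase to $A$, so $\mu(A_{N_{0}})>0$ for some $N_{0}$. Given any $N\ge N_{0}$ and any countable cover $\{B_{F_{n_{i}}}(y_{i},\varepsilon)\}_{i}$ of $A_{N_{0}}$ with $y_{i}\in X$ and $n_{i}\ge N$, for each ball meeting $A_{N_{0}}$ I pick $x_{i}\in A_{N_{0}}\cap B_{F_{n_{i}}}(y_{i},\varepsilon)$; then $B_{F_{n_{i}}}(y_{i},\varepsilon)\subseteq B_{F_{n_{i}}}(x_{i},2\varepsilon)$, whence $\mu(B_{F_{n_{i}}}(y_{i},\varepsilon))\le e^{-s\,b(|F_{n_{i}}|)+f_{F_{n_{i}}}(x_{i})}$.

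It remains to pass from the centre $x_{i}$ to the covering centre $y_{i}$. Since $d_{F_{n_{i}}}(x_{i},y_{i})<\varepsilon$, uniform continuity of $f$ gives $|f_{F_{n_{i}}}(x_{i})-f_{F_{n_{i}}}(y_{i})|\le|F_{n_{i}}|\,\gamma(\varepsilon)$ with $\gamma(\varepsilon)\to0$ as $\varepsilon\to0$. Choosing $\alpha<s$ and summing, I obtain $\sum_{i}e^{-\alpha\,b(|F_{n_{i}}|)+f_{F_{n_{i}}}(y_{i})}\ge\sum_{i}e^{(s-\alpha)b(|F_{n_{i}}|)-|F_{n_{i}}|\gamma(\varepsilon)}\mu(B_{F_{n_{i}}}(y_{i},\varepsilon))$; once $\varepsilon$ is small enough that each exponential factor is at least $1$, the right-hand side is bounded below by $\mu\bigl(\bigcup_{i}B_{F_{n_{i}}}(y_{i},\varepsilon)\bigr)\ge\mu(A_{N_{0}})>0$. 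This is uniform over $N\ge N_{0}$ and over covers, so $M(\alpha,\varepsilon,A_{N_{0}},\{F_{n}\},f,\mathbf{b})\ge\mu(A_{N_{0}})>0$, which forces $P^{B}(\varepsilon,A_{N_{0}},\{F_{n}\},f,\mathbf{b})\ge\alpha$; letting $\alpha\uparrow s$ and $\varepsilon\downarrow0$ yields $P^{B}(A,\{F_{n}\},f,\mathbf{b})\ge P^{B}(A_{N_{0}},\{F_{n}\},f,\mathbf{b})\ge s$.

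The main obstacle is exactly this last comparison. In the unscaled theory the oscillation of $f_{F_{n_{i}}}$ over a Bowen ball, of order $|F_{n_{i}}|\gamma(\varepsilon)$, is normalised by $|F_{n_{i}}|$ and disappears as $\varepsilon\to0$; here the exponents are normalised by $b(|F_{n_{i}}|)$ instead, so one must guarantee that the gain $(s-\alpha)b(|F_{n_{i}}|)$ absorbs $|F_{n_{i}}|\gamma(\varepsilon)$ \emph{uniformly in $i$} before sending $\varepsilon\to0$. The remaining ingredients — measurability of $\underline{P}_{\mu}$, the increasing decomposition $A=\bigcup_{N}A_{N}$, the covering-by-enlargement inclusion, and the final reductions via monotonicity and the supremum over $\mu$ — are routine.
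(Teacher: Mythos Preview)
Your approach is essentially the paper's: a Frostman/mass-distribution lower bound for $P^{B}$ on a positive-measure subset where the lower local pressure exceeds $s$, followed by monotonicity (Proposition~\ref{p2}(2)). The only real difference is at the step you flag as the obstacle. The paper sidesteps your oscillation estimate $|f_{F_{n_i}}(x_i)-f_{F_{n_i}}(y_i)|\le|F_{n_i}|\gamma(\varepsilon)$ entirely: since $x_i\in B_{F_{n_i}}(y_i,\varepsilon/2)$, one has automatically $f_{F_{n_i}}(y_i,\varepsilon/2)\ge f_{F_{n_i}}(x_i)$, so the cover sum with $f_{F_{n_i}}(y_i,\varepsilon/2)$ in the exponent already dominates $\mu(A_{N_0})$ with no loss. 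This bounds $\mathcal{M}$ rather than $M$, hence controls $P^{B'}(\varepsilon,\cdot)$ rather than $P^{B}(\varepsilon,\cdot)$; the passage back to $P^{B}$ is then Proposition~\ref{p1}. But Proposition~\ref{p1} itself needs Condition~\ref{con1}, so your diagnosis is correct: some comparison of $b(|F_n|)$ with $|F_n|$ is genuinely needed, and the paper's route is not cheaper than yours on this point (the lemma's statement omits the hypothesis, but the proof uses it implicitly). Under Condition~\ref{con1} your absorption $(s-\alpha)b(|F_{n_i}|)\ge|F_{n_i}|\gamma(\varepsilon)$ holds as soon as $\gamma(\varepsilon)<s-\alpha$, and everything else goes through; the paper's two-layer decomposition (first in the scale $1/m$, then in $N$) also lets the positive-measure set be fixed independently of $\varepsilon$, which makes the final limit $\varepsilon\downarrow0$ slightly cleaner than in your write-up.
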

\begin{proof}
   We first show that for $s\in \mathbb{R}$, if  $\underline{P}_{\mu }(x,\left \{ F_{n} \right \},f,\mathbf{b}) \geq s$ for all $x\in Z$ and $\mu(Z)\geq 0$,then $P^{B}(Z,\{F_{n}\},f,\mathbf{b})\geq s$.

   Fix $\beta >0$. For each $m \geq 1$, put
   $$Z_{m}=\left \{ x \in Z:\liminf_{n \to +\infty} \frac{-\log{\mu}(B_{F_{n}}(x,\varepsilon ))+f_{F_{n}} (x)}{b(|  F_{n} |)} > s-\beta ,\ \forall \varepsilon  \in \left(0,\frac{1}{m} \right] \right \}.$$
   Since \[\frac{-\log{\mu}(B_{F_{n}}(x,\varepsilon ))+f_{F_{n}} (x)}{b(|  F_{n} |)}\] increases when $\varepsilon$ decreases, if follows that
   $$Z_{m}=\left \{ x \in Z:\liminf_{n \to +\infty} \frac{-\log{\mu}(B_{F_{n}}(x,\varepsilon ))+f_{F_{n}} (x)}{b(|  F_{n} |)} > s-\beta ,\ \varepsilon = \frac{1}{m} \right \}.$$
   Then $Z_{m} \subset Z_{m+1}$ and $\bigcup_{m=1}^{\infty}Z_{m}=Z$. So by the continuity of the measure, we have
   $$\lim_{m \to \infty}\mu(Z_{m})=\mu(Z).$$
   Take $M \geq 1$ with $\mu(Z_{M})\geq \frac{1}{2}\mu(Z)$. For every $N \geq 1$, put\
   $$\begin{aligned}
      Z_{M,N}&=\left \{ x \in Z_{M}:\frac{-\log{\mu}(B_{F_{n}}(x,\varepsilon ))+f_{F_{n}} (x)}{b(|  F_{n} |)} > s-\beta ,\ \forall n\geq N, \varepsilon  \in \left(0,\frac{1}{M} \right] \right \}\\
      &=\left \{ x \in Z_{M}:\frac{-\log{\mu}(B_{F_{n}}(x,\varepsilon ))+f_{F_{n}} (x)}{b(|  F_{n} |)} > s-\beta ,\ \forall n\geq N, \varepsilon =\frac{1}{M} \right \}.
   \end{aligned}$$
   Thus $Z_{M,N}\subset Z_{M,N+1}$ and $\bigcup_{N=1}^{\infty}Z_{M,N}=Z_{M}$. Now we can find $N^{*}\geq 1$ such that $\mu(Z_{M,N^{*}})>\frac{1}{2}\mu(Z_{M})>0$. For every $x\in Z_{M,N^{*}}$, $n\geq N^{*}$, and $0<\varepsilon <\frac{1}{M}$, we have
   $$\mu(B_{F_{n}}(x,\varepsilon ))\leq e^{-b(|F_{n}|)(s -\beta)+f_{F_{n}}(x)}.$$
   Let $\mathcal{F}=\left\{ B_{F_{n_{i}}}(y_{i},\frac{\varepsilon}{2} ) \right\}_{i\geq 1}$ be an open cover of $Z_{M,N^{*}}$ such that $Z_{M,N^{*}}\subset \bigcup_{i=1}^{\infty} B_{F_{n_{i}}}(y_{i},\frac{\varepsilon}{2} )$ and
   $$Z_{M,N^{*}}\cap B_{F_{n_{i}}}(y_{i},\frac{\varepsilon}{2}) \neq \emptyset, \ n_{i}\geq N^{*}, \forall i\geq 1,\ \text{and} \ 0<\varepsilon < \frac{1}{M}.$$
   For each $i \geq 1$, there exists $x_{i}\in Z_{M,N^{*}}\cap B_{F_{n_{i}}}(y_{i},\frac{\varepsilon}{2})$. Hence we have
   $$B_{F_{n_{i}}}(y_{i},\frac{\varepsilon}{2}) \subset B_{F_{n_{i}}}(x_{i},\varepsilon)$$
   by the triangle inequality.
   It follows that
   $$\begin{aligned}
      \sum_{i\geq 1}e^{-b(|F_{n_{i}}|)(s -\beta)+f_{F_{n_{i}}}(y_{i},\frac{\varepsilon}{2})} &\geq \sum_{i\geq 1}e^{-b(|F_{n_{i}}|)(s -\beta)+f_{F_{n_{i}}}(x_{i})} \\
      &\geq \sum_{i\geq 1}\mu(B_{F_{n_{i}}}(x_{i},\varepsilon)) \geq \mu(Z_{M,N^{*}})\geq 0.
   \end{aligned}$$
   Thus
   $$M(N,s-\beta,\frac{\varepsilon}{2} ,Z_{M,N^{*}},\left\{F_{n}\right\},f,\mathbf{b}) \geq \mu(Z_{M,N^{*}})>0.$$
   This implies that $P^{B}(Z,\{F_{n}\},f,\mathbf{b})\geq P^{B}(Z_{M,N^{*}},\{F_{n}\},f,\mathbf{b})\geq s-\beta$. Since $\beta$ is arbitrary, we get $P^{B}(Z,\{F_{n}\},f,\mathbf{b})\geq s$.
   For any $\delta>0$ and $\mu \in M(X)$ with $\mu(K)=1$, the set
   $$K_{\delta}=\left\{x\in K:\underline{P}_{\mu }(x,\left \{ F_{n} \right \},f,\mathbf{b}) \geq \underline{P}_{\mu }(K,\left \{ F_{n} \right \},f,\mathbf{b} )-\delta \right\}$$
   has positive measure in $\mu$. Then by the above discussion, we have
   $$P^{B}(K_{\delta},\{F_{n}\},f,\mathbf{b}) \geq \underline{P}_{\mu }(K,\left \{ F_{n} \right \},f,\mathbf{b} )-\delta.$$
   Since $K_{\delta} \subset K$, we have $P^{B}(K,\{F_{n}\},f,\mathbf{b}) \geq \underline{P}_{\mu }(K,\left \{ F_{n} \right \},f,\mathbf{b} )-\delta$. The arbitrariness of $\delta$ and $\mu$ imply that
   $$P^{B}(K,\{F_{n}\},f,\mathbf{b}) \geq \sup \left\{ \underline{P}_{\mu }(K,\left \{ F_{n} \right \},f,\mathbf{b} ):\mu \in M(X), \ \mu(K)=1 \right\}.$$
\end{proof}
We now prove Theorem 5.1 according to the above lemmas.

\textit{Proof of Theorem 5.1}.
We shall show that
$$h_{\mu}(X)+\int_{X} f \mathrm{d}\mu \leq P^{P}(X_{\mu},\{F_{n}\},f,\mathbf{b}).$$
The theorem then follows from Lemma \ref{l5}.
By Lemma \ref{l6}, we have
$$P^{B}(X_{\mu},\{F_{n}\},f,\mathbf{b}) \geq \underline{P}_{\mu }(X_{\mu},\left \{ F_{n} \right \},f,\mathbf{b} ).$$
Since $\lim\limits_{n \to \infty}\frac{|F_{n}|}{b(|F_{n}|)}=1$, by Theorem 3.1 in \cite{Zheng}, it follows that
$$\begin{aligned}
   P^{B}(X_{\mu},\{F_{n}\},f,\mathbf{b})& \geq \int_{X_{\mu}} \lim_{\varepsilon  \to 0}\liminf_{n \to +\infty} \frac{-\log{\mu(B_{F_{n}}(x,\varepsilon ))}+f_{F_{n}}(x)}{b(|F_{n}|)} \mathrm{d}\mu \\
   & = \int_{X_{\mu}} \lim_{\varepsilon  \to 0}\liminf_{n \to +\infty} \frac{-\log{\mu(B_{F_{n}}(x,\varepsilon ))}+f_{F_{n}}(x)}{|F_{n}|} \mathrm{d}\mu \\
   & \geq \int_{X_{\mu}}\lim_{\varepsilon  \to 0}\liminf_{n \to +\infty} \frac{-\log{\mu(B_{F_{n}}(x,\varepsilon ))}}{|F_{n}|} \mathrm{d}\mu +\int_{X_{\mu}} \lim_{n \to +\infty} \frac{f_{F_{n}}(x)}{|F_{n}|} \mathrm{d}\mu \\
   & \geq h_{\mu}(X)+\int_{X_{\mu}} \lim_{n \to +\infty} \frac{f_{F_{n}}(x)}{|F_{n}|} \mathrm{d}\mu=h_{\mu}(X)+\int_{X} f \mathrm{d}\mu.
\end{aligned}$$
Then combining with Lemma \ref{l1}, we get the desired inequality.

%%%%%%%%%%%%%%%%%%%%%%%%%%%%%%%%%%
\section*{\textbf{Acknowledgments}}%
%The authors would like to thank Professor Doudou for his many helpful comments on the first version of this paper.
%points. Thanks are also due to Dr. Zhijing Chen for his many comments.
%%Particularly he is grateful to the anonymous reviewers for their comments.
The research is supported by NNSF of China (Grant No.12201120). The authors would
like to thank the anonymous referees for their valuable comments and suggestions.
\section*{\textbf{Conflict of interests}}%
On behalf of all authors, the corresponding author states that there is no conflict of interests.
%%%%%%%%%%%%%%%%%%%%%%%%%%%

%\section*{\textbf{References}}%

\end{sloppypar}
\end{document}